\newtheorem{theorem}{Theorem}[section]
\newtheorem{corollary}[theorem]{Corollary}
\newtheorem{lemma}[theorem]{Lemma}
\newtheorem{proposition}[theorem]{Proposition}
\newtheorem{remark}[theorem]{Remark}
\theoremstyle{definition} 
\newtheorem{definition}[theorem]{Definition}
\numberwithin{equation}{section}
\newcommand{\R}{\mathbb{R}}
\newcommand{\Rthree}{\mathbb{R}^3}
\newcommand{\Rd}{\mathbb{R}^d}
\newcommand{\iR}{\int_{\mathbb{R}^3}}
\newcommand{\iiRs}{\iint_{\mathbb{R}^6}}
\newcommand{\Sphere}{\mathbb{S}}
\newcommand{\Stwo}{\mathbb{S}^{2}}
\newcommand{\iS}{\int_{\mathbb{S}^{2}}}
\newcommand{\bn}{\bar{\nabla}}
\newcommand{\tn}{\tilde{\nabla}}
\newcommand{\indicator}[1]{\mathbbm{1}_{#1}}
\newcommand{\Sk}{\mathbb{S}_{k^\perp}^1}
\newcommand{\itheta}[2][\pi/2]{\int_{#2 = 0}^{#1}}
\newcommand{\iSk}{\int_{\Sk}}
\newcommand{\as}[1]{\overset{\leftrightarrow}{#1}}
\title{Boltzmann to Landau from the gradient flow perspective}
\author{Jos\'e A. Carrillo}
\thanks{Mathematical Institute, University of Oxford, Oxford OX2 6GG, UK (carrillo@maths.ox.ac.uk)}
\author{Matias G. Delgadino}
\thanks{Department of Mathematics, The University of Texas at Austin, Texas 78712, USA (matias.delgadino@math.utexas.edu)}
\author{Jeremy Wu}
\thanks{Mathematical Institute, University of Oxford, Oxford OX2 6GG, UK (jeremy.wu@maths.ox.ac.uk)}
\date{\today}
\def\tikz@plane@origin{\pgfpointxyz{0}{0}{#1}}%
\def\tikz@plane@x{\pgfpointxyz{1}{0}{#1}}%
\def\tikz@plane@y{\pgfpointxyz{0}{1}{#1}}%
\begin{document}
\tdplotsetmaincoords{110}{00} 
\maketitle

\begin{abstract}
	We revisit the grazing collision limit connecting the Boltzmann equation to the Landau(-Fokker-Planck) equation from their recent reinterpretations as gradient flows. Our results are in the same spirit as the $\Gamma$-convergence of gradient flows technique introduced by Sandier and Serfaty \cite{SS04,S11}. In this setting, the grazing collision limit reduces to showing the lower semi-continuous convergence of the Boltzmann entropy-dissipation to the Landau entropy-dissipation.
\end{abstract}

\section{Introduction}
\label{sec:intro}
The Boltzmann equation is the central equation in kinetic theory modelling particle collisions in a gas, and many other interacting particle systems \cite{CIP}. The Landau equation is the most important partial differential equation in collisional kinetic theory for plasma; it describes the evolution of the density of colliding particles in plasma physics~\cite{LP81}. The Landau equation can be derived as the grazing collision limit of the Boltzmann equation, that is when collisions with small angular deviation become predominant. We seek to reformulate the well-known grazing collision relation between the non-cutoff Boltzmann and the Landau equations \cite{DL92,D92,Goud97} from the recently developed gradient flow perspectives~\cite{E19,CDDW20}, respectively. For a given collision kernel $B$, the spatially homogeneous Boltzmann equation in $\R^3$ reads
\begin{equation}
	\label{eq:boltzintro}
	\partial_t f(v) = \int_{\R^3}\iS [f'f_*' - ff_*]B\left(|v-v_*|, \theta \right) d\sigma dv_*  =: Q_B(f,f).
\end{equation}
Here, we have used the usual abbreviations and notations $f = f(v), f_* = f(v_*), f' = f(v'), f_*' = f(v_*'),$ where the post-collision velocities are given by
\begin{align*}
	v' = \frac{v+v_*}{2} + \frac{|v-v_*|}{2}\sigma, \quad 
	v_*' = \frac{v+v_*}{2} - \frac{|v-v_*|}{2}\sigma, \quad \sigma \in \Stwo.
\end{align*}
Intuitively, the second argument of $B$ is the independent variable which represents the angle of collisions $\theta \in [0, \pi/2]$ and can be implicitly recovered from the relations
\begin{align*}
	k = \frac{v-v_*}{|v-v_*|}, \quad 
	\cos \theta = k\cdot \sigma.
\end{align*}
A typical example of the Boltzmann kernel is
\[
B(|z|, \theta) = |z|^\gamma b(\theta), \quad \sin\theta\, b(\theta) =: \beta(\theta)\ge 0 \quad \gamma \in [-4,0].
\]
Here, $\gamma>0$ is referred as the hard potential case and $\gamma<0$ as the soft potential case (moderately soft for $\gamma\in [-2,0)$ and very soft for $\gamma\in [-4,-2)$). We highlight $\gamma = 0$ as the Maxwellian molecule case and $\gamma = -3$ as the physically relevant Coulomb case. 
One can formally derive the Landau equation in the case when the bulk of the collisions happen with a small angle $\theta \ll 1$. More specifically, fix $\epsilon>0$ and extend $\beta$ from $[0,\pi/2]$ to the whole real line by zero. We consider the scaling (discussed in~\Cref{sec:kernel}) that concentrates $\beta$ around $\theta =0$ given by
\[
\beta^\epsilon(\theta) := \frac{\pi^3}{\epsilon^3}\beta\left(
\frac{\pi \theta}{\epsilon}
\right), \quad \theta \in [0,\epsilon/2].
\]
Denoting the new collision kernel $B^\epsilon$ that is induced through this scaling, this gives rise to a new collision operator $Q_B^\epsilon$ which replaces the right-hand side of~\eqref{eq:boltzintro}. Taking $\epsilon\to 0$ is known as the \textbf{grazing collision limit}. More precisely, for a fixed sufficiently smooth $f$, the formal computations of Degond and Lucquin-Desreux~\cite{DL92} and Desvillettes~\cite{D92} show the convergence
\[
Q_B^\epsilon(f,f) \overset{\epsilon\downarrow 0}{\to} Q_L(f,f),
\]
where the Landau collision operator $Q_L(f,f)$ is given by
\begin{align*}
	Q_L(f,f) = C_\beta\nabla_v\cdot \left\{f
	\int_{\R^3}f_* |v-v_*|^{2+\gamma} \Pi[v-v_*](\nabla_v \log f - \nabla_{v_*} \log f_*) dv_*
	\right\}.
\end{align*}
Here, the constant is $C_\beta = \frac{\pi}{8}\int_0^{\pi/2}\theta^2 \beta(\theta)d\theta$ and the matrix $\Pi[v-v_*]$ is the projection onto $\{v-v_*\}^\perp$,
\[
\Pi[v-v_*] = I - \frac{(v-v_*)\otimes (v-v_*)}{|v-v_*|^2}.
\]
For simplicity, we shall hereafter assume $C_\beta = 1$ which fixes a normalization for $\beta$. Originally, Landau~\cite{LP81} derived what is now known as the Landau equation (also known as the Landau-Fokker-Planck equation)
\[
\partial_t f = Q_L(f,f),
\]
as a model to replace the Boltzmann equation for grazing collisions, sidestepping the singularities arising from $\beta$ around $\theta\sim 0$. Of course, while these preliminary computations established the formal `convergence of the collision operators' the natural question is rigorous convergence of solutions
\[
\partial_t f^\epsilon = Q_B^\epsilon(f^\epsilon,f^\epsilon) \to \partial_t f = Q_L(f,f)
\]
which is the main topic of this paper. This question has already been answered~\cite{H14,V98} including quantitative estimates by Godinho~\cite{G13} for short times, in which he considers solutions within the well-posedness framework theory of Fournier-Mouhot~\cite{FM09} and Fournier-Gu\'erin~\cite{FG08}. We seek to revisit this limiting process from the perspective of gradient flows as mentioned earlier; our contribution is to streamline the proof of the grazing collision limit under the mildest assumptions of uniformly bounded initial second moment and entropy, for all values of $\gamma$. As a contrast, Godinho~\cite{G13} requires the initial conditions to have at least 7th order moments, or more depending on $\gamma$, for the well-posedness theory to apply. In this article, we build on Villani's identification of the entropy-dissipation structure~\cite{V98} by following the program set by Sandier-Serfaty~\cite{SS04,S11}. Although this procedure is by now well-known, our contributions include a new inequality relating the Boltzmann entropy-dissipation to the Landau entropy-dissipation as well as a method to prove the detailed steps from Sandier-Serfaty. We illustrate the parallel views of H-solutions and our notion of solutions in the diagram below. Horizontal arrows denote the passage of the grazing collision limit, while vertical arrows denote the recent equivalent views between H-solutions~\cite{V98} of Boltzmann or Landau and gradient flow solutions. References are attached to the arrows corresponding to the respective contributions. In this paper, we consider what we denote `H-gradient flows' as our notion of solution, the precise definition can be found in~\Cref{sec:assumptions}.
\begin{figure}[H]
	\centering
	\begin{tikzpicture}
		\node[draw,align = center] at (0,4) {H-Gradient flow\\solutions $f^\epsilon$};
		\node[draw,align = center]at (6,4) {H-Gradient flow\\solutions $f$};
		\node[draw, align = center]at (0,0) {H-solutions $f^\epsilon$};
		\node[draw,align = center] at (6,0) {H-solutions $f$};
		\node at (0,5) {Boltzmann};
		\node at (6,5) {Landau};
		\draw[<->] (0,0.5) -- (0,3.5) node[pos=0.5, left] {\cite{E19}};
		\draw[<->] (6,0.5) -- (6,3.5) node[pos=0.5, right] {\cite{CDDW20}};
		\draw[->] (1.5,0) -- (4.5,0) node[pos=0.5, below, align = center] {\cite{DL92,D92,V98,G13} \\ and more};
		\draw[->] (1.5,4) -- (4.5,4) node[pos=0.5, above] {This paper};
		\node at (3,-1.5) {Grazing collision limit};
		\node at (-1.75,2) {\rotatebox{90}{Conditional equivalence}};
	\end{tikzpicture}
\end{figure}

\paragraph{\textbf{Previous Results on the Grazing Collision Limit.-}}
We now briefly discuss some of the results concerning different notions of solution for the Boltzmann equation in relation to the grazing collision limit. The earliest well-posedness result for usual \textit{weak solutions} to the Boltzmann equation is due to Arkeryd~\cite{A72,A722} who required cut-off assumptions on the collision kernel. In particular, this excludes the physically relevant soft potential cases $\gamma < 0$. Nevertheless, this well-posedness theory was sufficient for Arsen'ev and Buryak~\cite{AB90} in 1990 who rigorously proved convergence in the grazing collision limit from Boltzmann to Landau.

In an important breakthrough, Villani introduced the notion of \textit{H-solutions}~\cite{V98} which treated the grazing collision limit for soft potentials $\gamma \in [-3,0]$ and hard potentials $\gamma >0$. Shortly after, in a collaboration with Alexandre~\cite{AV02,AV04} they upgraded from weak to strong convergence in the grazing collision limit, by applying the regularity estimate they achieved with Desvillettes and Wennberg~\cite{ADVW00}. The argument for the gain in compactness relied on velocity average techniques~\cite{GLPS88} applied to \textit{renormalized solutions}~\cite{DL89r,DL89}.

Later on, quantitative rates of convergence in the grazing collision limit were established by Godinho~\cite{G13}. His results relied on the uniqueness theorems (still open for Landau in the Coulomb case $\gamma=-3$) of Fournier who collaborated with Gu\'erin~\cite{FG08} and Mouhot~\cite{FM09} by treating these equations as the Kolmogorov-Fokker-Planck equations associated to certain \textit{stochastic processes}.

More recently, Erbar~\cite{E19} characterized weak solutions of the Boltzmann equation as \textit{gradient flows} of the entropy in the Maxwellian case $\gamma=0$. The current authors and Desvillettes~\cite{CDDW20} proved an analogous characterization for the Landau equation in the soft potential case not including Coulombic interaction $\gamma>-3$. We expect Erbar's result for the Boltzmann equation can be extended to soft potentials (at least down to $\gamma>-3$ as in~\cite{CDDW20}) although this is left for future work. The primary technical issue in Erbar's result which does not extend for $\gamma< 0$ is the unavailability of a lower bound for the Boltzmann entropy-dissipation in the spirit of \cite{D15,D16} for the Landau equation. These estimates allowed the current authors and Desvillettes to treat $\gamma>-3$ for the Landau equation; a similar estimate is expected for the Boltzmann entropy-dissipation for $\gamma>-3$. Moreover, for both Boltzmann and Landau, we believe this gradient flow correspondence can be rigorously proven for the full range of $\gamma$ considered in this paper, for which we would require a significant improvement of the technical estimates in~\cite{E19,CDDW20}.

While Erbar's characterization was not proven for $\gamma<0$, in this manuscript we will focus on solutions that dissipate entropy. This mechanism can be captured by the renormalized solutions of Alexandre and Villani~\cite{AV02}, in which they developed an existence theory for a large range of kernels $\gamma\in[-3,0]$, and very mild assumptions on the initial data; finite second moment and entropy. In fact, this is the setting which we consider in this paper, and we introduce a slightly stronger notion of solution than H-solutions from~\cite{V98} (see~\Cref{def:hcms}) which, however, is weaker than renormalized solutions from~\cite{AV02}.



\smallskip

\paragraph{\textbf{Entropy dissipation gradient flow structure.-}}
Over the last two decades, the gradient flow community has been very active in PDEs starting from  
the significant gradient flow landmarks by Jordan, Kinderleher and Otto~\cite{JKO98}, Benamou and Brenier~\cite{BB00}, Otto~\cite{O01} and the seminal reference book by Ambrosio, Gigli and Savare~\cite{AGS08}. Some of the advantages of gradient flow techniques include new insights into new functional inequalities, stable numerical methods and quantitative understanding of trends to equilibrium in tandem with uniqueness of solutions.

In \cite{SS04,S11} Sandier and Serfaty utilized the Energy Dissipation Inequality as a way to streamline the characterization the limit of evolutions that have a gradient flow structure. Effectively the problem reduces to checking the lower semi-continuous convergence of the associated dissipations and metric derivative. This approach has been heavily used in the recent years in a wide array of scenarios. Making a non-exhaustive list we mention the works in Cahn-Hilliard \cite{choksi2011small, bellettini2012convergence, delgadino2018convergence}, diffusion to reaction limits \cite{arnrich2012passing}, particle methods second order \cite{carrillo2019blob} and fourth order \cite{matthes2017convergent} non-linear diffusion, congested crowd motion \cite{alexander2014quasi} and dislocations \cite{blass2015dynamics}. In this manuscript, we follow the strategy of Sandier and Serfaty to give a straightforward and self-contained proof of the grazing collision limit.

Formally, the gradient flow structure of a PDE is given by understanding the evolution of a Lyapunov functional as a steepest descent in some specific metric. We can be more explicit (but still formal) with the Boltzmann and Landau equations as examples. The famous H-theorem asserts that the Boltzmann entropy
\[
\mathcal{H}[f] = \int f \log f
\]
is a Lyapunov functional for both of these equations. More specifically, consider $f^\epsilon(t), f(t)$ solutions to the Boltzmann and Landau equations, respectively; formally calculating the evolution of the entropy we obtain
\begin{align}
	\label{eq:Hthm}
	\begin{split}
		\mathcal{H}[f^\epsilon(t)]    + \int_0^t\underbrace{\frac{1}{4}\iiRs \iS [{f^\epsilon}'{f^\epsilon}_*' - f^\epsilon f_*^\epsilon](\log{f^\epsilon}'{f^\epsilon}_*' - \log f^\epsilon f_*^\epsilon) B^\epsilon d\sigma dv_* dv}_{=:D_B^\epsilon(f^\epsilon)\ge 0}ds &= \mathcal{H}[f^\epsilon(0)] ,  \\
		\mathcal{H}[f(t)]     
		+ \int_0^t\underbrace{2\iiRs |v-v_*|^{2+\gamma}\left|\Pi[v-v_*](\nabla-\nabla_*)\sqrt{ff_*}\right|^2dv_* dv}_{=:D_L(f)\ge 0}ds &= 
		\mathcal{H}[f(0)] .
	\end{split}
\end{align}
We will refer to the underbraced terms $D_B^\epsilon$ and $D_L$ as the Boltzmann and Landau dissipations, respectively. \Cref{eq:Hthm} implies not only that the Boltzmann entropy is a Lyapunov functional along the flows of the Boltzmann and Landau equations, but also it formally quantifies its descent. Beyond the physical relevance of~\eqref{eq:Hthm} as evidence of the arrow of time, mathematically this mechanism is at the core of Villani's infinite time horizon existence theory of solutions to both the Boltzmann and Landau equations, see \cite{V98}. Roughly speaking, if $\mathcal{H}[f(0)]<+\infty$ in the case of Landau, then the second equation of~\eqref{eq:Hthm} implies that $\mathcal{H}[f(t)]$ is a decreasing function of time with dissipation given by $\int_0^tD_L(f)dt <+\infty$. Villani recognised the finiteness of $\int_0^tD_L(f)dt$ as a functional regularity statement on $f$. We also point out that this was the focus of Desvillettes' results~\cite{D15,D16}; finite Landau entropy-dissipation implies finiteness of some weighted Fisher information functional. This notion of solution is known as H-solutions, and one of its salient features is that it only assumes boundedness of relevant physical quantities of the initial data. This perspective was taken further first by Erbar~\cite{E19} for Boltzmann and then the current authors and Desvillettes~\cite{CDDW20} for Landau by considering~\eqref{eq:Hthm} as a \textit{steepest descent} formulation of entropy with a specific `metric' associated to the dissipation. In these works, the metrics are constructed to rewrite~\eqref{eq:Hthm} as a so-called `Energy Dissipation (In)equality' (EDI or EDE)
\begin{align}
	\label{eq:EDE}
	\begin{split}
		\mathcal{H}[f^\epsilon(t)] + \frac{1}{2}\int_0^t D_B^\epsilon(f^\epsilon(s))\;ds +\frac{1}{2}\int_0^t |\dot{f}^\epsilon|_\epsilon^2(s)\; ds &\overset{(\le)}{=}
		\mathcal{H}[f^\epsilon(0)] ,     \\
		\mathcal{H}[f(t)] + \frac{1}{2}\int_0^t D_L(f(s))\;ds + \frac{1}{2}\int_0^t|\dot{f}|_L^2(s)\; ds &\overset{(\le)}{=} 
		\mathcal{H}[f(0)] .
	\end{split}
\end{align}
The quantities $|\dot{f}^\epsilon|_\epsilon^2$ and $|\dot{f}|_L^2$ are the metric derivatives with respect to the Boltzmann metric~\cite{E19} $d_\epsilon$, and Landau metric~\cite{CDDW20} $d_L$. The main contribution of \cite{E19} and \cite{CDDW20} is to show that, under assumptions on the collision kernel, an absolutely continuous curve $f:[0,\infty)\to L^1(\R^3)$ with bounded dissipation is a weak solution to Boltzmann or Landau if and only if it satisfies the respective EDI. For this paper, we consider curves $f^\epsilon(t)$ that satisfy the first EDI of~\eqref{eq:EDE} as `H-gradient flow' solutions to the Boltzmann equation and similarly for Landau. Our goal is to understand the grazing collision limit $\epsilon\downarrow 0$ by passing to the limit in the EDI characterization ~\eqref{eq:EDE}.


\smallskip

\paragraph{\textbf{Plan of the paper.-}}
The plan of the paper is the following. In \Cref{sec:assumptions}, we list the assumptions that we need to state our main result in~\Cref{thm:grazcoll} and describe the main steps of the proof. \Cref{sec:sigmarep} contains the notations we will use to set up the framework of the grazing collision limit. We recall very formally the grazing collision limit in~\Cref{sec:formalgc} which features many similar computations to be repeated later in the paper. The remaining notations and definitions pertaining to abstract gradient flow theory are recalled in~\Cref{sec:gradflow} with an emphasis on the abstract theory developed in~\cite{E19,CDDW20}. We start combining the gradient flow theory with the grazing collision limit in~\Cref{sec:cpct} which elaborates the compactness mechanisms we use to produce candidate limits for the Landau equation. The next two~\Cref{sec:gammadiss,sec:gammamd} contain the technical proofs in the passage of the limit $\epsilon\downarrow 0$. Finally, various results needed that were already present in the literature are recalled in the appendices~\ref{sec:lm} and~\ref{sec:strcpct}.


\section{Main Result}
\label{sec:assumptions}
Motivated by the EDI~\eqref{eq:EDE}, we formalize the notion of solutions to Boltzmann and Landau we consider here. In the following definition, we refer to the Boltzmann and Landau metric derivatives. These definitions as well as other technical gradient flow concepts are recalled later in~\Cref{sec:gradflow} for the sake of presentation.
\begin{definition}[H-gradient flows for Boltzmann and Landau]
	\label{def:hcms}
	For $\epsilon>0$ and $T>0$, we say that an absolutely continuous curve $f^\epsilon : t \in [0,T] \mapsto L^1_{\ge 0}(\R^3)$ with respect to the Boltzmann metric is an \textit{H-gradient flow} solution to the Boltzmann equation (with kernel $B^\epsilon$) if the Energy Dissipation Inequality holds for every $t \in [0,T]$
	\[
	\mathcal{H}[f^\epsilon(t)] + \frac{1}{2}\int_0^t D_B^\epsilon(f^\epsilon(s)) \;ds+ \frac{1}{2}\int_0^t|\dot{f}^\epsilon|_\epsilon^2(s) \;ds\leq \mathcal{H}[f^\epsilon(0)] <\infty ,
	\]
	and it preserves mass and dissipates the second moment
	\begin{equation}\label{eq:aux}
		\int_{\R^3}f_t^\epsilon(v) dv = 1, \quad \int_{\R^3}|v|^2 f_t^\epsilon(v)dv \le \int_{\R^3}|v|^2 f_s^\epsilon(v)dv<\infty, \quad \forall \, 0 \le s\le t\le T.
	\end{equation}
	Likewise, for $T>0$ an absolutely continuous curve $f: t \in [0,T] \mapsto  L^1_{\ge 0}(\R^3)$ with respect to the Landau metric is an \textit{H-gradient flow} solution to the Landau equation if the Energy Dissipation Inequality holds for every $t \in [0,T]$
	\[
	\mathcal{H}[f(t)] + \frac{1}{2}\int_0^t D_L(f(s))\;ds + \frac{1}{2}\int_0^t|\dot{f}|_L^2(s)\;ds\leq  \mathcal{H}[f(0)] <\infty ,
	\]
	and it preserves mass and dissipates the second moment as in \eqref{eq:aux} replacing $f_t^\epsilon$ by $f_t$.
\end{definition}
\begin{remark}
	\label{rem:sug}
	The notion of H-gradient flow is strictly weaker than the notion of curves of maximal slope introduced in \cite{AGS08}. More specifically, we do not require that the dissipations $D_B^\epsilon$ and $D_L$ to be strong upper gradients (see~\Cref{sec:gradflow}). This extra property was shown by Erbar~\cite{E19} for the Boltzmann equation in the Maxwellian potential case. The current authors and Desvillettes~\cite{CDDW20} showed this property for the Landau equation in the soft potential case under the following additional integrability assumptions for $\gamma\in (-3,0]$. Together with bounded entropy and the time integrability of the dissipation, the additional criteria for $f$ is that there exists some $p \in ( 3/(3+\gamma),\infty]$ such that
	\[
	(1+|v|^2)^{1-\frac{\gamma}{2}}f \in L^\infty((0,T);L^1\cap L^p(\R^3)).
	\]
\end{remark}
\begin{remark}
	\label{rem:renormalH}
	The more classical notion of renormalized solutions are weak solutions that also dissipate entropy
	$$
	\mathcal{H}(f(t))+\int_0^t D(f(t))\le \mathcal{H}(f_0).
	$$
	It can be checked that renormalized solutions are also H-gradient flow solutions, see~\Cref{rem:mdeqdiss}. The existence of renormalized solutions (and hence H-gradient flow solutions) can be shown subject to boundedness assumptions on the initial data, see \cite[Corollary 2.1 and Appendix]{AV02}.
\end{remark}
For H-gradient flows, the initial entropy controls the entropy at later times as well as the integrability of the dissipation and the metric derivative. We therefore consider H-gradient flows of Boltzmann, $f^\epsilon$, subject to the following assumptions.
\begin{enumerate}[label=(\textbf{A\arabic*})] 
	\item \label{ass:bddmom} For every $\epsilon>0$, we assume that the initial probability densities $f^\epsilon(0) = f_0^\epsilon$ converge in the weak-* topology to some probability density $f_0$. Furthermore, we assume a uniform second moment bound and convergence in entropy
	\[
	\sup_{\epsilon>0}\int_{\R^3} (1+|v|^2)f_0^\epsilon(v)dv < +\infty, \quad \mathcal{H}[f_0^\epsilon] \overset{\epsilon\downarrow 0}{\to} \mathcal{H}[f_0] < + \infty.
	\] 
	\item \label{ass:beta} There exists $\gamma\in[-4,0)$, such that the $\epsilon$-collision kernel satisfies 
	$$
	B^\epsilon(r,\theta) \sin\theta = r^\gamma \beta^\epsilon(\theta), 
	$$
	where
	\[
	\beta^\epsilon(\theta) = \frac{\pi^3}{\epsilon^3}\beta\left(
	\frac{\pi\theta}{\epsilon}
	\right), \quad \theta\in(0,\epsilon/2).
	\]
	The function $\beta$ satisfies that for every $\delta>0$
	\[
	\sup_{\theta\in[\delta,\pi/2]}\beta(\theta) < + \infty, \quad \text{supp}\; \beta \in [0,\pi/2].
	\]
	and that there exists $\nu \in (0,2)$ and $c_1>0$ such that
	$$
	c_1 \theta^{-1-\nu} \le \beta(\theta), \quad \forall \theta \in [0,\pi/2].
	$$
	The most important quantitative assumption on the kernel is \textit{finite angular momentum transfer}~\cite{V98}
	\begin{equation}
		\label{eq:betaint}
		\int_0^{\pi/2}\theta^2 \beta(\theta)\; d\theta =\frac{8}{\pi}.
	\end{equation}
\end{enumerate}
\begin{remark}
	The choice of $\frac{8}{\pi}$ in~\eqref{eq:betaint} is a normalization constant that fixes $C_\beta$ = 1 as in~\Cref{sec:intro}.
\end{remark}

\begin{remark}
	\label{rem:uppbdd}
	Our results also readily generalize to more general interaction kernels $B^\epsilon$ which do not decouple or satisfy the specific scaling of \cref{ass:beta}. As in~\cite{ADVW00,AV02}, we consider kernels that satisfy the following bound on the total cross section
	\begin{equation}\label{Tepsbound}
		T^\epsilon(|v-v_*|) := \int_0^{\pi/2}\theta^2 \sin\theta B^\epsilon(|v-v_*|,\theta) d\theta\le C (|v-v_*|^{-4} + 1)\omega(|v-v_*|^2).
	\end{equation}
	where $C>0$ is a constant and $\omega$ is a bounded positive function such that $\omega(r) \to 0$ as $r\to \infty$ and $r\to 0$. 
	
	As for the grazing collision limit $\epsilon \downarrow 0$, we require that there exists a function $T$ such that
	\begin{equation}\label{Tepsconvergence}
		|T^\epsilon(r) - T(r)| \le o(1)\left(
		r^{-4} + 1\right)\omega(r), \quad \text{as }\epsilon\downarrow 0.    
	\end{equation}
	Up to a multiplicative constant, the limiting Landau collision operator reads
	\[
	Q_L(f,f) = \nabla\cdot \left(f \int_{\R^3}f_* T(|v-v_*|)|v-v_*|^2\Pi[v-v_*](\nabla\log f - \nabla_* \log f_*) dv_*\right).
	\]
	We will discuss this generalization in more detail in~\Cref{rem:totalcrosssection}.
\end{remark}

\begin{remark}\label{rem:simplification}
	The Coulombic collision kernel which couples $\gamma =-3$ with $\nu = 2$ (see the discussion in~\Cref{sec:kernel}) fails the finite angular momentum transfer~\eqref{eq:betaint}. In this case, a minor logarithmic cut-off adjustment is needed; for example~\cite{V98} we consider instead
	\[
	\beta^\epsilon(\theta) := \frac{1}{\log \epsilon^{-1}}1_{\theta \ge \epsilon}\beta(\theta).
	\]
	Under this new scaling, we require
	\[
	\int_0^{\pi/2}\theta^2 \beta^\epsilon(\theta)d\theta \overset{\epsilon\downarrow 0}{\to} \frac{8}{\pi}.
	\]
	Our methods can be adapted to cover this case as well. The strong compactness estimate as it is written~\Cref{sec:strcpct} technically fails, but since the cut-off disappears in the grazing collision limit, the necessary strong compactness is still valid~\cite{AV02,AV04}.
\end{remark}

\begin{theorem}[Grazing collision limit of H-gradient flow solutions from Boltzmann to Landau]
	\label{thm:grazcoll}
	Suppose $(f^\epsilon)_{\epsilon>0}$ is a family of H-gradient flow solutions to the Boltzmann equation satisfying assumptions~\ref{ass:bddmom} and~\ref{ass:beta}. Then there exists $f$ an H-gradient flow solution to the Landau equation, such that up to a subsequence $f^\epsilon(t)$ converges in the weak-* topology against bounded and continuous functions to $f(t)$ for every $t\in[0,T]$. 
	
	Moreover, for $\gamma\in[-2,0]$ and fixed $\phi \in \dot{W}^{1,\infty}(\R^3)$ (resp. $\gamma\in[-4,-2)$ and fixed $\phi \in \dot{W}^{2,\infty}(\R^3)$), the function $t\mapsto \int f(t)\phi$ is H\"older continuous with exponent $\frac{1}{2}.$
\end{theorem}

\begin{proof}
	Our starting point is the EDI from the definition of H-gradient flow solutions to the Boltzmann equation. By definition and the finite initial quantities in~\ref{ass:bddmom}, we have the uniform bounds
	\begin{equation}
		\label{eq:bounds}
		\max\left(\sup_{\epsilon>0}\sup_{t\in[0,T]} \int_{\R^3} (1+|v|^2)f_t^\epsilon(v) dv,\ \sup_{\epsilon>0}\sup_{t\in[0,T]}\mathcal{H}[f^\epsilon(t)],\ \sup_{\epsilon>0} \int_0^T D_B^\epsilon(f_t^\epsilon) dt,\ \sup_{\epsilon>0} \int_0^T |\dot{f}^\epsilon|_\epsilon^2(t) dt\right) < + \infty.
	\end{equation}
	These uniform bounds and assumption~\ref{ass:beta} are used in the following steps.
	\begin{enumerate}
		\item Extract a convergent subsequence of $f^\epsilon$ and some limit $f$ with the claimed time regularity (\Cref{sec:cpct}).
		\item Establish the estimate (\Cref{sec:gammadiss})
		\[
		\liminf_{\epsilon\downarrow 0}D_B^\epsilon(f^\epsilon(t)) \ge D_L(f(t))\qquad\mbox{a.e. $t\in(0,T)$}.
		\] \label{step:gammadiss}
		\vspace{-0.2cm}
		\item Establish the estimate (\Cref{sec:gammamd})
		\[
		\liminf_{\epsilon\downarrow 0}|\dot{f}^\epsilon(t)|_\epsilon^2 \ge |\dot{f}(t)|_L^2 \qquad\mbox{a.e. $t\in(0,T)$}.
		\] \label{step:gammamd}
	\end{enumerate}
	Next, we pass to the limit $\epsilon\downarrow 0$ in the EDI using \ref{ass:bddmom}, Fatou's Lemma, Step~\ref{step:gammadiss}, Step~\ref{step:gammamd}, and the lower semi-continuity of $\mathcal{H}$ to obtain
	\begin{align*}
		\mathcal{H}[f_0]=\liminf_{\epsilon\downarrow 0}\mathcal{H}[f^\epsilon_0] &\ge \liminf_{\epsilon\downarrow 0}\mathcal{H}[f^\epsilon(t)] + \frac{1}{2}\liminf_{\epsilon\downarrow 0}\int_0^t D_B^\epsilon(f^\epsilon(s))ds + \frac{1}{2} \liminf_{\epsilon\downarrow 0} \int_0^t |\dot{f}^\epsilon|_\epsilon^2(s)ds\\
		&\ge\mathcal{H}[f(t)] + \frac{1}{2}\int_0^t D_L(f(s))ds + \frac{1}{2} \int_0^t |\dot{f}|_L^2(s)ds,
	\end{align*}
	which implies that $f$ is an H-gradient flow solution to Landau.
\end{proof}
	\begin{remark}
		According to the same second moment and entropy bounds from assumptions~\ref{ass:bddmom} and~\ref{ass:beta}, we recover the results in Villani~\cite{V98}, that is the convergence from $f^\epsilon$ to $f$ weakly in $L^p((0,T); \, L^1(\R^3))$ for every $1 \le p < +\infty$. As for the time regularity, we are able to expand the class of test functions to bounded Lipschitz functions for $\gamma\in[-2,0]$.
	\end{remark}

\begin{remark}[Affine Representation]
	The main idea to showing Step~\ref{step:gammadiss} (\Cref{sec:gammadiss}) and Step~\ref{step:gammamd} (\Cref{sec:gammamd}) of the previous proof is to rewrite these expressions in what we will hereafter refer to as \textit{affine representation}. In the context of optimal transport gradient flows, this method was first utilized by Otto~\cite[equation (187)]{O01} for the Fisher information. More explicitly, we have the characterization of the Fisher information as
	\[
	\int_{\Rd} |\nabla \sqrt{f}|^2 = \sup_{\psi \in C_c^\infty(\Rd)} \left\{
	2\int_{\Rd} \nabla \sqrt{f} \cdot \nabla \psi - \int_{\Rd} |\nabla \psi|^2
	\right\}.
	\]
	The left-hand side (quadratic in $\nabla \sqrt{f}$) is equal to a supremum over particular affine expressions of $\nabla \sqrt{f}$ on the right-hand side. We establish a similar equality for both the dissipations and the metric derivatives. Taking the Landau dissipation for example and denoting the differential operator $\tn = |v-v_*|^{1+\frac{\gamma}{2}}\Pi[v-v_*](\nabla - \nabla_*)$, we show (\Cref{sec:landiss})
	\[
	D_L(f) = 2\iiRs |\tn \sqrt{ff_*}|^2 = \sup_{\psi}\left\{
	4\iiRs (\tn \sqrt{ff_*})\cdot \tn \psi - 2 \iiRs |\tn \psi|^2
	\right\}.
	\]
	The specific set of test functions $\psi$ for which the supremum is taken will be specified in later sections. We note that we have dropped the differentials in the integrals. To avoid burdensome notation we will do this throughout the paper when the variables of integration are clear.
\end{remark}
When the limit $f$ has enough integrability (see~\Cref{rem:sug}), we can apply the results of \cite{CDDW20} to obtain that the Landau dissipation is a strong upper gradient, see \Cref{defn:sug}. Following the gradient flow $\Gamma$-convergence arguments of Sandier-Serfaty~\cite{SS04,S11}, we can readily show that the solution $f^\epsilon$ converges strongly to $f$. This is the content of our next result.
\begin{corollary}
	\label{cor:SSGamma}
	Suppose $(f^\epsilon)_{\epsilon>0}$ is a family of H-gradient flows (with equality in the EDE~\eqref{eq:EDE}) of the Boltzmann equation in $t\in[0,T]$ satisfying assumptions~\ref{ass:bddmom} and~\ref{ass:beta}. Assume further that $D_L$ is a strong upper gradient for the limit curve $f$ obtained in~\Cref{thm:grazcoll}. Then for every $t\in [0,T]$, we have
	\[
	\mathcal{H}[f^\epsilon(t)] \overset{\epsilon\downarrow 0}{\to} \mathcal{H}[f(t)].
	\]
	Moreover, we also obtain
	\[
	D_B^\epsilon(f^\epsilon) \to D_L(f), \quad |\dot{f}^\epsilon|^2_\epsilon \to |\dot{f}|^2_L, \quad \text{in }L_{loc.}^1(0,T).
	\]
\end{corollary}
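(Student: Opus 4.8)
The plan is to follow the standard Sandier--Serfaty $\Gamma$-convergence scheme, using the fact that for the limit curve $f$ the Landau dissipation $D_L$ is a strong upper gradient, which upgrades the EDI of \Cref{thm:grazcoll} into an EDE. First I would recall the chain of inequalities already assembled in the proof of \Cref{thm:grazcoll}: for every $t\in[0,T]$,
\begin{align*}
\mathcal{H}[f_0] = \liminf_{\epsilon\downarrow 0}\mathcal{H}[f^\epsilon_0] &\ge \liminf_{\epsilon\downarrow 0}\left(\mathcal{H}[f^\epsilon(t)] + \frac{1}{2}\int_0^t D_B^\epsilon(f^\epsilon)\,ds + \frac{1}{2}\int_0^t |\dot f^\epsilon|_\epsilon^2\,ds\right) \\
&\ge \mathcal{H}[f(t)] + \frac{1}{2}\int_0^t D_L(f)\,ds + \frac{1}{2}\int_0^t |\dot f|_L^2\,ds,
\end{align*}
where in the last line we used lower semicontinuity of $\mathcal{H}$, Step~\ref{step:gammadiss}, Step~\ref{step:gammamd} and Fatou. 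Since $D_L$ is a strong upper gradient for $f$ and the assumptions on mass, energy and entropy hold for $f$, the characterization of \cite{CDDW20} (curves of maximal slope) forces equality throughout: $\mathcal{H}[f(t)] + \frac12\int_0^t D_L(f) + \frac12\int_0^t |\dot f|_L^2 = \mathcal{H}[f_0]$ for all $t$.

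Next I would run the soft-but-crucial argument that equality in a limit of inequalities, combined with lower semicontinuity of each piece, forces \emph{each} term to converge. Concretely, write $A_\epsilon(t) = \mathcal{H}[f^\epsilon(t)]$, $B_\epsilon(t) = \frac12\int_0^t D_B^\epsilon(f^\epsilon)\,ds$, $C_\epsilon(t) = \frac12\int_0^t |\dot f^\epsilon|_\epsilon^2\,ds$, and the corresponding limit quantities $A,B,C$ built from $f$; we have $A_\epsilon(t) + B_\epsilon(t) + C_\epsilon(t) \le \mathcal{H}[f^\epsilon_0] \to \mathcal{H}[f_0] = A(t) + B(t) + C(t)$, together with $\liminf A_\epsilon(t) \ge A(t)$, $\liminf B_\epsilon(t) \ge B(t)$, $\liminf C_\epsilon(t) \ge C(t)$. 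A standard elementary lemma (sum of liminfs, each bounded below, whose total limsup does not exceed the sum of the lower bounds) then yields $A_\epsilon(t)\to A(t)$, $B_\epsilon(t)\to B(t)$, $C_\epsilon(t)\to C(t)$ for every fixed $t$. In particular $\mathcal{H}[f^\epsilon(t)]\to\mathcal{H}[f(t)]$ for every $t\in[0,T]$, which is the first assertion.

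For the second assertion, from $B_\epsilon(t)\to B(t)$ for all $t$ we get that the nondecreasing functions $t\mapsto \int_0^t D_B^\epsilon(f^\epsilon)\,ds$ converge pointwise to the nondecreasing function $t\mapsto \int_0^t D_L(f)\,ds$; a monotone/Dini-type argument (or the elementary fact that pointwise convergence of monotone functions to a continuous limit is locally uniform, here the limit $t\mapsto\int_0^t D_L(f)$ being absolutely continuous) upgrades this to convergence in $L^1_{loc}(0,T)$ of the integrands $D_B^\epsilon(f^\epsilon) \to D_L(f)$, using that all these are nonnegative and their integrals converge — this is the classical criterion that $g_\epsilon\ge 0$, $g_\epsilon \to g$ in distributions (from convergence of the primitives) together with $\int_0^t g_\epsilon \to \int_0^t g$ gives $g_\epsilon\to g$ in $L^1_{loc}$. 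The identical reasoning applied to $C_\epsilon$ gives $|\dot f^\epsilon|_\epsilon^2 \to |\dot f|_L^2$ in $L^1_{loc}(0,T)$.

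The main obstacle is not the soft functional-analytic bookkeeping above but making sure the \emph{hypotheses} of the strong-upper-gradient characterization of \cite{CDDW20} are genuinely available for the limit $f$: one must check that $f$ has the extra integrability of \Cref{rem:sug} (which here is assumed, since $D_L$ being a strong upper gradient for $f$ is taken as a hypothesis), and that $f$ is absolutely continuous with respect to the Landau metric with the metric derivative $|\dot f|_L$ appearing above actually equal to the one produced by the compactness step — this identification is exactly what Step~\ref{step:gammamd} and the compactness of \Cref{sec:cpct} deliver, so the corollary follows by assembling these. I would also note that the equality in the EDE for $f^\epsilon$ (rather than merely $\le$) assumed in the statement is what lets us assert $\mathcal{H}[f^\epsilon_0] = A_\epsilon(t) + B_\epsilon(t) + C_\epsilon(t)$ exactly; with only $\le$ one still gets $\limsup(A_\epsilon + B_\epsilon + C_\epsilon)(t) \le \mathcal{H}[f_0]$, which is all the elementary lemma needs, so in fact the conclusion is robust, but stating it with equality keeps the exposition clean.
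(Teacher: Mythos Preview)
Your overall scheme matches the paper's: both invoke the Sandier--Serfaty mechanism, use the strong upper gradient hypothesis to turn the limit EDI into an EDE, and then argue that equality in the limit forces each term to converge. Your treatment of the entropy convergence is correct and essentially the same as the paper's, just organized via the ``sum of liminfs'' lemma rather than isolating $\mathcal{H}[f^\epsilon(t)]$ directly.

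There is, however, a genuine gap in your argument for the $L^1_{loc}$ convergence of the dissipations and metric derivatives. The ``classical criterion'' you invoke --- that $g_\epsilon\ge 0$, convergence of the primitives $\int_0^t g_\epsilon \to \int_0^t g$ for all $t$ (hence $g_\epsilon\to g$ in distributions), implies $g_\epsilon\to g$ in $L^1_{loc}$ --- is \emph{false}. A counterexample on $[0,2\pi]$ is $g_\epsilon(s)=1+\sin(s/\epsilon)$ and $g\equiv 1$: one has $g_\epsilon\ge 0$, $\int_0^t g_\epsilon\to t=\int_0^t g$ for every $t$, yet $\int_0^{2\pi}|g_\epsilon-1|\,ds\to 4\neq 0$. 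Locally uniform convergence of the (monotone) primitives does not help either; it gives nothing beyond distributional convergence of the integrands.

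What rescues the argument is precisely the pointwise information you already have from Steps~\ref{step:gammadiss} and~\ref{step:gammamd}: $\liminf_{\epsilon\downarrow 0} g_\epsilon(s)\ge g(s)$ for a.e.\ $s$, with $g_\epsilon=D_B^\epsilon(f^\epsilon)$ or $|\dot f^\epsilon|_\epsilon^2$. Then $(g-g_\epsilon)^+\le g\in L^1_{loc}$ and $(g-g_\epsilon)^+\to 0$ a.e., so dominated convergence gives $\int_0^t (g-g_\epsilon)^+\to 0$; combined with $\int_0^t(g_\epsilon-g)\to 0$ this yields $\int_0^t|g_\epsilon-g|\to 0$. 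This is the standard Sandier--Serfaty step the paper defers to~\cite{SS04,S11}; you should invoke the a.e.\ liminf inequalities explicitly rather than the incorrect criterion.
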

\begin{proof}
	This proof follows the gradient flow $\Gamma$-convergence arguments of Sandier-Serfaty~\cite{SS04,S11}. We fix $t\in[0,T]$. Repeating the passage to the limit $\epsilon\downarrow 0$ from the proof of~\Cref{thm:grazcoll}, we have
	\[
	\liminf_{\epsilon\downarrow 0}(-\mathcal{H}[f^\epsilon(t)]) \ge -\mathcal{H}[f_0] + \frac{1}{2}\int_0^t D_L(f(s)) + |\dot{f}|_L^2(s) ds.
	\]
	By Young's inequality and the assumption that $D_L$ is a strong upper gradient for $f$, we have
	\[
	\frac{1}{2}\int_0^t D_L(f(s)) + |\dot{f}|_L^2(s) ds \ge -\int_0^t\sqrt{D_L(f(s))}|\dot{f}|_L(s) ds \ge \mathcal{H}[f_0] - \mathcal{H}[f(t)].
	\]
	These previous inequalities yield
	\[
	\liminf_{\epsilon\downarrow 0}-\mathcal{H}[f^\epsilon(t)] \ge -\mathcal{H}[f(t)],
	\]
	which, together with the lower semi-continuity of $\mathcal{H}$, gives the strong convergence
	$
	\mathcal{H}[f^\epsilon(t)] \overset{\epsilon\downarrow 0}{\to} \mathcal{H}[f(t)].
	$
	This forces all of the inequalities to be equalities and the rest of the proof proceeds exactly the same as in~\cite{SS04,S11}.
\end{proof}

\section{Notations and formulation of grazing collision limit}
\label{sec:sigmarep}
For numbers $a, \,b\in\R$ we use the symbol $a \lesssim_{\alpha, \,\beta, \,\dots} b$ to mean $
a \le C b$ for some constant $C = C(\alpha, \,\beta, \,\dots) >0.$ In the case where the dependence of the constant is explicit, we will drop the subscript on `$\lesssim$'. We may also write $a = \mathcal{O}(b)$ to mean $a\lesssim b$. We will use $a \sim b$ to denote both $a\lesssim b$ and $b \lesssim a$. For ease of notation, we will drop the differentials of integration when the variables we are integrating on are evident.

We define the Boltzmann collision operator for a fixed collision kernel $B$ acting on test functions $\psi$ by
\begin{equation}
	\label{eq:sigmaact}
	\langle Q_B(f,f), \psi\rangle := -\frac{1}{4}\iiRs \iS [f'f_*' - ff_*](\psi' + \psi_*' - \psi - \psi_*) B\left(
	|v-v_*|, \theta
	\right)d\sigma dv_* dv.
\end{equation}
The pre-post collision quantities are defined as follows for $v, \,v_* \in \R^3$ and $\sigma \in \Stwo$.
\begin{align*}
	k = \frac{v-v_*}{|v-v_*|}, \quad \cos \theta = k\cdot \sigma, \quad v' = \frac{v+v_*}{2} + \frac{|v-v_*|}{2}\sigma, \quad
	v_*' = \frac{v+v_*}{2} - \frac{|v-v_*|}{2}\sigma.
\end{align*}
We shall make precise in~\Cref{sec:kernel} the typical example of the kernel $B$ since the grazing collision limit comes from the concentration of small $\theta$ here. In~\Cref{sec:spherical}, we construct a coordinate system parametrizing $\sigma\in\Stwo$ by $\theta\in[0,\pi/2]$ and another variable which makes the grazing collision computations more explicit in the rest of this paper. We gather these notations and recall the formal grazing collision computations in~\Cref{sec:formalgc} as an example of many similar computations in this paper.
\subsection{Comments on the assumptions of the kernel}
\label{sec:kernel}
For $\gamma\in [-4,0), \, \nu \in (0,2)$, we recall~\ref{ass:beta} where the form of the kernel is
\begin{equation}
	\label{eq:collisionkernel}
	B(r,\theta) \sin \theta = r^\gamma \beta(\theta), \quad \beta(\theta) \gtrsim \theta^{-1-\nu}.
\end{equation}
We keep $\gamma, \,\nu$ decoupled however the physically relevant case~\cite{V98,G13} is given by
\[
-3 \le \gamma = \frac{s-5}{s-1}, \quad  \nu = \frac{2}{s-1}, \quad s\ge 2.
\] 
We define the $\epsilon$-collision kernel $B^\epsilon$ through the relation in~\eqref{eq:collisionkernel} where $\beta$ is extended to $(0,+\infty)$ by zero and we consider
\[
\beta^\epsilon(\theta) = \frac{\pi^3}{\epsilon^3}\beta\left(\frac{\pi\theta}{\epsilon} \right), \quad \theta \in (0,\epsilon/2).
\]
\begin{remark}
	The finite angular momentum transfer~\eqref{eq:betaint} is minimal for a mathematical theory~\cite{V98,AV02,ADVW00} in the non-cutoff Boltzmann equation.
	\begin{itemize}
		\item Clearly, the scaling power of $\epsilon$ localizes the singularity in $\beta^\epsilon$ around $\theta = 0$. As well, the choice of $\epsilon^{-3}$ preserves~\eqref{eq:betaint} so that
		\[
		\int_0^{\epsilon/2}\theta^2\beta^\epsilon(\theta) d\theta = \int_0^{\pi/2}\theta^2 \beta(\theta) d\theta = \frac{8}{\pi}, \quad \forall \epsilon>0.
		\]
		\item 
		The strong form of the Landau collision operator $Q_L(f,f)$ is a second-order derivative on $f$ while, at first glance, $Q_B^\epsilon(f,f)$ evaluates no derivatives of $f$. The finite angular moment transfer allows the interpretation of $Q_B^\epsilon$ as a \textbf{second-order difference quotient ``in the angular direction''}~\cite{ADVW00,AV02} on $f^\epsilon$.
	\end{itemize}
\end{remark}
We will denote quantities with sub or superscript $B$ meaning that the choice of collision kernel is arbitrary modulo~\eqref{eq:betaint}. Quantities with sub or superscript $\epsilon$ will be in specific reference to the $\epsilon$-collision kernel $B^\epsilon$ described above.
Based on these notations, we record for reference the most physically relevant parameters
\begin{equation*}
	\begin{array}{c|c|c|c|c}
		{}    &s    &\gamma     &\nu     &\beta(\theta) \overset{\theta\downarrow 0}{\sim} \theta^{-1-\nu}  \\\hline
		\text{Maxwellian}   &5  &0  &1/2    &\theta^{-3/2} \\\hline
		\text{Coulomb}     &2   &-3     &2   &\theta^{-3}
	\end{array}.
\end{equation*}
In all these examples mentioned, notice that the following lack of integrability always holds
\[
\int_0^{\pi/2}\beta(\theta)d\theta = + \infty.
\]
This can be interpreted as an `affluence of grazing collisions'~\cite{FG08,FM09}.

\subsection{Spherical coordinates}
\label{sec:spherical}
According to $\cos \theta = k\cdot \sigma$, we describe the construction of a new coordinate system for which $\sigma \in \Stwo$ can be parameterized by $(\theta, \,\phi) \in [0, \,\pi/2]\times [0, \,2\pi]$ where $\theta$ is the polar angle and $\phi$ is the azimuthal angle. In terms of the integration, this allows us to write the change of variables formula
\[
\iS d\sigma = \int_0^{\pi/2}d\theta \sin \theta \int_0^{2\pi}d\phi,
\]
so that the concentrated scaling $\theta \sim \epsilon$ is easier to treat. As we shall see, these coordinates allow to identify different mechanisms in the grazing collision limit. The average over the azimuthal angle $\phi$ induces the second-order differentiation in the orthogonal direction of $v-v_*$ seen in the Landau operator while the integration over the polar angle $\theta$ treats the singular kernal $\beta(\theta)$.

Without loss of generality, we can assume $B(|z|, \,\cdot)$ is supported in $[0, \,\pi/2]$ due to standard symmetrising. This is because any configuration of pre-post collision velocities for $\theta \in [\pi/2, \,\pi]$ corresponds to $\theta - \pi/2\in[0, \,\pi/2]$ when switching $v\leftrightarrow v_*$, see~\Cref{fig:pic1}. 
\begin{figure}[H]
	\centering
	\begin{tikzpicture}
		\draw[YellowGreen] (-90:4cm) -- (90:4cm) node[pos=0.75,left] {$P_{k,\sigma}\cap \{k\}^\perp$};
		\draw (0,0) circle (4cm);
		\draw[Circle-Circle] (180:4.06cm) -- (0:4.06cm) node[pos=0,left] {$v_*$} coordinate (v) node[pos=1,right] {$v$} node[pos=0.5,above left] {$\frac{v+v_*}{2}$};
		\draw[thick,purple,->] (0,0) -- (0:2cm) node[pos=0.5, below] {$k$};
		\draw[Circle-Circle] (210:4.06cm) -- (30:4.06cm) node[pos=0,left] {$v_*'$} coordinate (vp) node[pos=1,right] {$v'$};
		\draw[thick,blue,->] (0,0) -- (30:2cm) node[pos=0.5, above] {$\sigma$};
		\filldraw[black] (0,0) circle (1.5pt); 
		\draw[dashed, thin] (0,0) -- (15:4cm);
		\draw[->,Periwinkle] (4,0) -- (30:4) node[pos=0.6, left] {$\omega$};
		\draw[red] (0:1cm) arc (0:30:1cm) node[pos=0.4,left] {$\theta$};
		\begin{scope}[rotate around = {15:(0,0)}]
			\draw[thin,Dandelion] (3.5,0) -- (3.5,-0.36) -- (3.86,-0.36);
		\end{scope}
	\end{tikzpicture}
	\caption{Geometry of elastic collision in $\sigma$-representation.}
	\label{fig:pic1}
\end{figure}
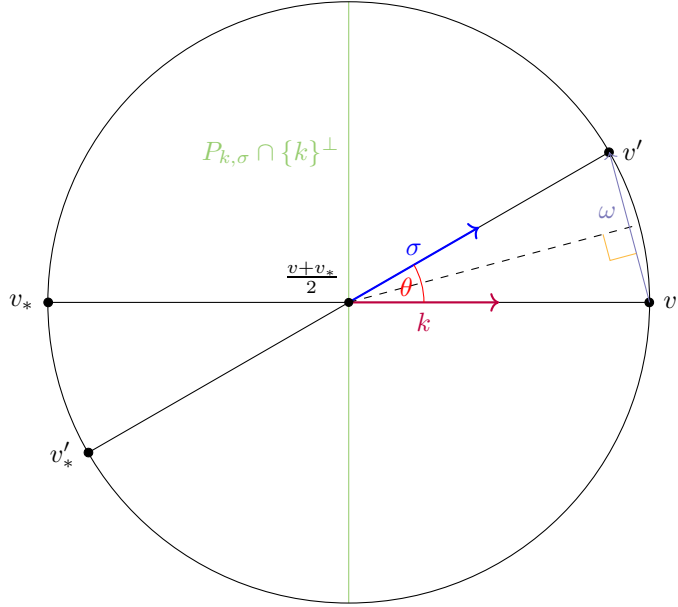
That the mid-point/momentum of the velocities is conserved is a consequence of only considering elastic collisions
\[
\frac{v+v_*}{2} = \frac{v'+v_*'}{2}.
\]
Let us refer to the plane spanned by $\sigma$ and $k$ by $P_{k, \,\sigma}$. So~\Cref{fig:pic1} gives a perspective of $P_{k, \,\sigma}$ with its normal vector coming directly out of the page. Consider the line obtained by $P_{k, \,\sigma} \cap \{ k\}^\perp$. Upon intersection with $\Stwo$ (centred at $\frac{v+v_*}{2})$, this line reduces to two vectors which differ by a sign; $\Stwo \cap P_{k, \,\sigma}\cap \{k\}^\perp = \{p_1, \,p_2\}$ where $p_1 = - p_2$ and we assign $p = p_1$ the `+' choice in the decomposition
\[
\sigma = \cos \theta \, k + \sin \theta \, p.
\]
Note this is nothing but the orthogonal decomposition of $\sigma$ with respect to $k$ and $\{k\}^\perp$. In general, we will abuse notation for this coordinate transformation by referring to $\Stwo \simeq \partial B_1(\frac{v+v_*}{2})$. We also introduce the following notation which is drawn in~\Cref{fig:phi}
\[
\Sk := \Stwo \cap \{k \}^\perp \simeq \Sphere^1.
\]
A three-dimensional perspective of~\Cref{fig:pic1} is depicted in~\Cref{fig:spherical}.
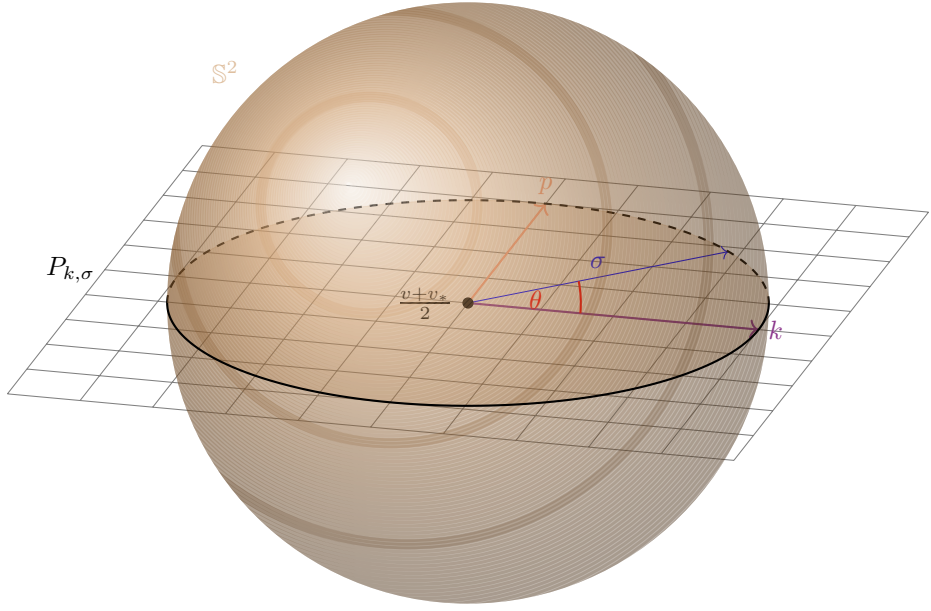
\begin{figure}[H]
	\centering
	\begin{tikzpicture}[font=\sffamily]
		\node[circle,fill,inner sep=1.5pt,label=left:$\frac{v+v_*}{2}$] (O) at (0,0,0){};
		\shade[ball color=brown,opacity=0.2] (O) circle (4);
		\shade[color=brown, opacity = 0.5] (-1,5,5) node[left] {$\Stwo$};
		\begin{scope}[tdplot_main_coords]
			\begin{scope}[on background layer]
				\begin{scope}[canvas is xy plane at z = 0,rotate around={15:(0,0)}]
					\draw[opacity = 0.5] (-5,-5) grid (5,5) ;
					\draw (-5,0) node[left] {$P_{k,\sigma}$}; 
					\filldraw[color=brown, opacity = 0.2] (0,0) circle (4);
				\end{scope}
				\draw[thick,Fuchsia,->] (0,0,0) -- (z spherical cs: radius = 4, theta = 90, phi = 15) coordinate (k) node[pos=1,right] {$k$};
				\draw[thick,Melon,->] (0,0,0) -- (z spherical cs: radius = 4, theta = 90, phi = 285) coordinate (p) node[pos=1, above] {$p$};
				\draw[blue,->] (0,0,0) -- (z spherical cs: radius = 4, theta = 90, phi = 330) coordinate (sigma) node[pos=0.5, above] {$\sigma$};
				\pic [red,draw,thick, "$\theta$", angle radius = 1.5cm] {angle = k--O--sigma };
				\draw[thick,dashed] plot[variable=\x,domain=\tdplotmainphi+180:\tdplotmainphi+360,smooth,samples=60]  (z spherical cs:radius=4,theta=90,phi=\x);
			\end{scope}
			\draw[thick] plot[variable=\x,domain=0:180,smooth,samples=60]  (z spherical cs:radius=4,theta=90,phi=\x);
		\end{scope}
	\end{tikzpicture}
	\caption{Three-dimensional perspective of $\Stwo$ with the plane spanned by $k,\sigma$ given by $P_{k,\sigma}$.}
	\label{fig:spherical}
\end{figure}
We introduce one final parameterisation for $p$, see~\Cref{fig:phi}. Consider fixed $k$ and $\theta\in [0,\pi/2]$, and distinct $\sigma_1, \,\sigma_2 \in \Stwo$ such that 
\[
\cos \theta = k\cdot \sigma_i, \quad i=1,2.
\]
Following the construction described earlier, this uniquely defines $p_i \in \Stwo \cap P_{k, \,\sigma_i} \cap \{ k\}^\perp$ such that
\[
\sigma_i = \cos \theta \, k + \sin \theta \, p_i, \quad i= 1,2.
\]
Notice that both $p_1, \, p_2 \in \Sk$. Thus, given orthonormal vectors $\{h, \,i\} \subset \Sk$, we can express
\[
p = \cos \phi \, h + \sin \phi \, i, \quad \text{for some }\phi \in [0, \,2\pi].
\]
This leads to the following change of variables; given $k\in \Stwo$ (determined by $v, \,v_*\in\Rthree$), we have
\[
\iS d\sigma = \int_0^{\pi/2}d\theta\sin\theta \int_{\Sk} dp =  \int_0^{\pi/2}d\theta\sin\theta \int_0^{2\pi}d\phi.
\]
We shall refer to both changes of variables to $(\theta,\, p)$ and $(\theta, \,\phi)$ as spherical coordinates. With these notations, we will also use the following expressions for the post-collision velocities as perturbations of the pre-collision velocities
\begin{align*}
	v' &= v + \frac{1}{2}|v-v_*|(\sigma - k) \\
	v_*' &= v_* - \frac{1}{2}|v-v_*|(\sigma - k).
\end{align*}
The post-collision velocity expressions in this form are especially useful in the grazing collision limit $|\sigma - k| \to 0$ (see~\eqref{eq:equivmom}). Using the spherical coordinate system described, we have
\begin{align}
	\sigma &= \cos \theta \, k + \sin \theta \, p, \label{eq:sigkp}\\
	v' &= v - \frac{1}{2}|v-v_*|(1-\cos \theta) k +  \frac{1}{2}|v-v_*|\sin \theta \, p, \notag\\
	v_*' &= v_* + \frac{1}{2}|v-v_*|(1-\cos \theta) k - \frac{1}{2}|v-v_*| \sin \theta \, p. \notag
\end{align}
Thus, in spherical coordinates, an equivalent form of~\eqref{eq:sigmaact} is
\begin{equation}
	\label{eq:angact}
	\langle Q(f,f),\psi\rangle = - \frac{1}{4}\iiRs \int_{\theta = 0}^{\pi/2}\iSk [f'f_*' - ff_*](\psi' + \psi_*' - \psi - \psi_*) B(|v-v_*|,\theta) \sin \theta \, dp d\theta dv_* dv.
\end{equation}

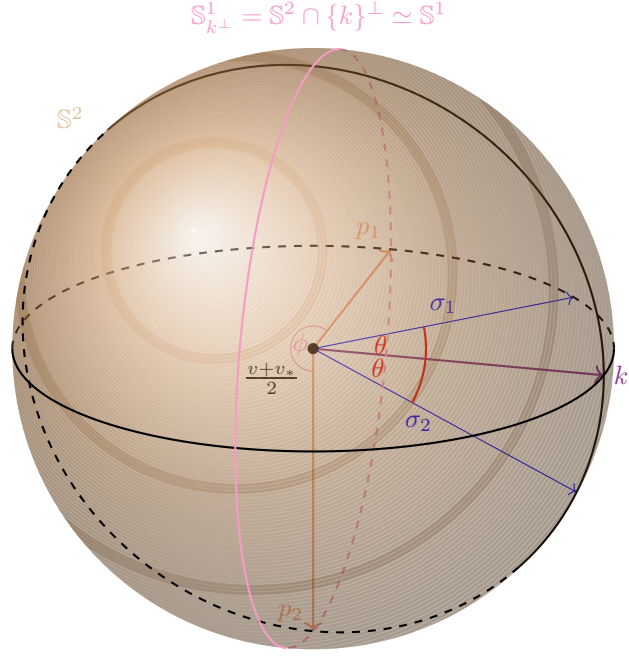
\begin{figure}[H]
	\centering
	\begin{tikzpicture}[font=\sffamily]
		\node[circle,fill,inner sep=1.5pt,label= below left:$\frac{v+v_*}{2}$] (O) at (0,0,0){};
		\shade[ball color=brown,opacity=0.2] (O) circle (4);
		\shade[color=Lavender] (2,6,5) node[above] {$\Sk = \Stwo \cap \{k\}^\perp \simeq \Sphere^1$};
		\shade[color=brown, opacity = 0.5] (-1,5,5) node[left] {$\Stwo$};
		\begin{scope}[tdplot_main_coords]
			\begin{scope}[on background layer]
				\draw[thick,Fuchsia,->] (0,0,0) -- (z spherical cs: radius = 4, theta = 90, phi = 15) coordinate (k) node[pos=1,right] {$k$};
				\draw[thick,Melon,->] (0,0,0) -- (z spherical cs: radius = 4, theta = 90, phi = 285) coordinate (p1) node[pos=1, above left] {$p_1$};
				\draw[blue,->] (0,0,0) -- (z spherical cs: radius = 4, theta = 90, phi = 330) coordinate (sigma1) node[pos=0.5, above] {$\sigma_1$};
				\pic [red,draw,thick, "$\theta$", angle radius = 1.5cm] {angle = k--O--sigma1 };
				\draw[thick,Melon,->] (0,0,0) -- (z spherical cs: radius = 4, theta = 180, phi = 15) coordinate (p2) node[pos=1, above left] {$p_2$};
				\pic [Lavender,draw,thin, "$\phi$", angle radius = 0.3cm] {angle = p1--O--p2 };
				\draw[blue,->] (0,0,0) -- (z spherical cs: radius = 4, theta = 115, phi = 15) coordinate (sigma2) node[pos=0.4, below] {$\sigma_2$};
				\pic [red,draw,thick, "$\theta$", angle radius = 1.5cm] {angle = sigma2--O--k };
				\draw[thick,dashed] plot[variable=\x,domain=\tdplotmainphi+180:\tdplotmainphi+360,smooth,samples=60]  (z spherical cs:radius=4,theta=90,phi=\x);
				\draw[thick] plot[variable=\x,domain=-45:135,smooth,samples=60]  (z spherical cs:radius=4,theta=\x,phi=15);
				\draw[thick, dashed, Lavender] plot[variable=\x,domain=0:200,smooth,samples=60]  (z spherical cs:radius=4,theta=\x,phi=285);
			\end{scope}
			\draw[thick] plot[variable=\x,domain=0:180,smooth,samples=60]  (z spherical cs:radius=4,theta=90,phi=\x);
			\draw[thick,dashed] plot[variable=\x,domain=135:315,smooth,samples=60]  (z spherical cs:radius=4,theta=\x,phi=15);
			\draw[thick, Lavender] plot[variable=\x,domain=200:380,smooth,samples=60]  (z spherical cs:radius=4,theta=\x,phi=285);
		\end{scope}
	\end{tikzpicture}
	\caption{Different configurations of $\sigma_i, \,p_i$ for $i=1, \,2$ given fixed $k, \,\theta$ such that $\cos \theta = k\cdot \sigma_i$.}
	\label{fig:phi}
\end{figure}

The general principle we want to fix is that the grazing collision limit is more easily treated in the spherical coordinate representation of~\eqref{eq:angact}. We notice the identity
\begin{equation}
	\label{eq:equivang}
	|\sigma - k |^2 = 2(1-k\cdot \sigma) = 2(1 - \cos \theta)
\end{equation}
which leads to the following useful estimates
\begin{equation}
	\label{eq:equivmom}
	\theta^2 \lesssim 1 - \cos \theta \lesssim 1 - k\cdot \sigma \lesssim |\sigma - k|^2 \lesssim \theta^2.
\end{equation}
More precisely, we recall
\begin{align}
	\label{eq:boundcos}
	\begin{split}
		\frac{2}{\pi^2}\theta^2 \le  1 - \cos \theta \le \frac{1}{2}\theta^2, \quad &\theta \in [0,\pi], \\
		\frac{2}{\pi}\theta \le  \sin \theta \le \theta, \quad &\theta \in [0,\pi/2].
	\end{split}
\end{align}

\begin{remark}
	Using~\eqref{eq:equivmom} and the spherical coordinate transformation we just described, the finite angular momentum transfer~\eqref{eq:betaint} can be equivalently expressed as
	\[
	\iS (1 - k\cdot \sigma) b(cos^{-1}(k\cdot\sigma))d\sigma < + \infty, \quad b(\theta) = \sin\theta \beta(\theta).
	\]
	This notation is sometimes used for example in~\cite{ADVW00}.
\end{remark}

\subsection{The formal grazing collision limit}
\label{sec:formalgc}
We dedicate this subsection to sketching the grazing collision limit while familiarizing the various notations that we will use throughout this paper. The sketch of the grazing collision limit is in~\Cref{lem:gcsketch}, which relies on some technical results (Lemmas~\ref{lem:estbnv2}--\ref{lem:epszerobnpsiavgv2}) which are postponed until after the sketch of the proof. Following Erbar's convention~\cite{E19}, for a function $\psi : \R^3 \to \R $ we define the discrete gradient operator
\[
\bn \psi (v,v_*,\sigma) := \psi(v') + \psi(v_*') - \psi(v) - \psi(v_*).
\]
To compress some expressions further, we extend Erbar's discrete gradient to functions $\psi : (v,v_*) \in \R^6 \mapsto \psi(v,v_*) \in \R$ by
\[
\bn \psi(v,v_*,\sigma) := \psi(v',v_*') + \psi(v_*',v') - \psi(v,v_*) - \psi(v_*,v).
\]
In this way, the action of $Q_B^\epsilon(f,f)$ on test functions $\psi$ reads either
\begin{align}
	\langle Q_B^\epsilon(f,f),\psi\rangle &= -\frac{1}{8}\iiRs \iS [\bn (ff_*)] [\bn \psi]B^\epsilon(|v-v_*|,\theta)  \,d\sigma dv_* dv, \label{eq:boltztest1}\\
	&= \frac{1}{2}\iiRs ff_* \iS \bn \psi B^\epsilon \, d\sigma dv_* dv, \label{eq:boltztest2}
\end{align}
where the second expression stems from changing variables in the pre-post collision velocities from~\eqref{eq:sigmaact}.

We define the Landau collision operator $Q_L(f,f)$ acting on test functions $\psi$ by
\begin{align}
	\label{eq:lanact}
	\begin{split}
		\langle Q_L(f,f),\psi\rangle :=-\frac{1}{2}\iiRs |v-v_*|^{2+\gamma}[(\nabla - \nabla_*)(ff_*)]^T \Pi[v-v_*](\nabla \psi - \nabla_* \psi_*) dv_* dv.
	\end{split}
\end{align}
Following and extending the notation in~\cite{CDDW20}, for functions $\psi(v)$ and $\psi(v,v_*)$, we define the operators
\begin{align*}
	\tn\psi (v,v_*) &:= |v-v_*|^{1+\frac{\gamma}{2}}\Pi[v-v_*](\nabla \psi(v) - \nabla\psi(v_*)), \\
	\tn \psi(v,v_*) &:= |v-v_*|^{1+\frac{\gamma}{2}}\Pi[v-v_*](\nabla - \nabla_*) \psi(v,v_*).
\end{align*}
This leads to the abbreviation of~\eqref{eq:lanact} for $Q_L(f,f)$ acting on test functions $\psi$ by either
\begin{align}
	\langle Q_L(f,f),\psi\rangle &= - \frac{1}{2}\iiRs [\tn (ff_*)]^T \tn \psi \, dv_* dv, \label{eq:lantest1} \\
	&= \frac{1}{2}\iiRs ff_* \tn \cdot \tn \psi \, dv_* dv. \label{eq:lantest2}
\end{align}
\eqref{eq:lantest2} is formally obtained by integrating by parts $\tn$ from~\eqref{eq:lantest1}. The operator $\tn\cdot$ is notation for the adjoint to $\tn$ meaning that for a vector field $A(v,v_*) \in \R^3$, it reads
\[
[\tn \cdot A](v,v_*) := (\nabla - \nabla_*)\cdot(\Pi[v-v_*]|v-v_*|^{1+\frac{\gamma}{2}}A(v,v_*)).
\]
To be explicit, $\tn\cdot \tn \psi$ should just be thought of as notation for
\[
\tn\cdot \tn \psi = |v-v_*|^{2+\gamma}(\nabla-\nabla_*)(\Pi[v-v_*](\nabla-\nabla_*)\psi),
\]
where we have used that $(\nabla-\nabla_*)|v-v_*| \perp \Pi[v-v_*]$.
\begin{lemma}[Formal Grazing Collision Limit]
	\label{lem:gcsketch}
	For fixed sufficiently smooth $f, \, \psi$ we have
	\[
	\langle Q_B^\epsilon(f,f),\psi\rangle \overset{\epsilon\downarrow 0}{\to} \langle  Q_L(f,f),\psi\rangle.
	\]
\end{lemma}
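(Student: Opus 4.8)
The plan is to use the expression \eqref{eq:boltztest2} for $\langle Q_B^\epsilon(f,f),\psi\rangle$ in spherical coordinates and perform a Taylor expansion of the discrete gradient $\bn\psi$ in the small parameter governing the size of $\sigma-k$, which by \eqref{eq:equivmom} is comparable to $\theta$. Writing the post-collision velocities in the perturbative form $v' = v - \tfrac12|v-v_*|(1-\cos\theta)k + \tfrac12|v-v_*|\sin\theta\,p$ and similarly for $v_*'$, I would expand $\psi(v') + \psi(v_*') - \psi(v) - \psi(v_*)$ to second order in $\theta$. The zeroth and first order terms in the $p$-direction integrate to zero over $\iSk dp$ by symmetry (the map $p \mapsto -p$ is a measure-preserving involution of $\Sk$), so the leading contribution is quadratic. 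Keeping the quadratic term and using $\sin^2\theta \sim \theta^2$, $(1-\cos\theta)\sim \theta^2/2$, together with the identity $\fint_{\Sk} p\otimes p\, dp = \tfrac12\Pi[v-v_*]$, one finds that $\iSk \bn\psi\, dp$ behaves like $\tfrac{\pi}{8}\theta^2 |v-v_*|^2 (\nabla-\nabla_*)\cdot(\Pi[v-v_*](\nabla-\nabla_*)\psi) + o(\theta^2)$, modulo constants. Multiplying by $B^\epsilon\sin\theta = |v-v_*|^\gamma \beta^\epsilon(\theta)$, integrating in $\theta$, and invoking the scaling $\int_0^{\epsilon/2}\theta^2\beta^\epsilon(\theta)\,d\theta = \int_0^{\pi/2}\theta^2\beta(\theta)\,d\theta = 8/\pi$ from \cref{ass:beta}, the constant $\tfrac{\pi}{8}\cdot\tfrac{8}{\pi} = 1$ emerges and we recover precisely $\tfrac12\iiRs ff_* \,\tn\cdot\tn\psi\,dv_*dv = \langle Q_L(f,f),\psi\rangle$ as in \eqref{eq:lantest2}.

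The execution splits into three parts, which I expect correspond to the postponed Lemmas~\ref{lem:estbnv2}--\ref{lem:epszerobnpsiavgv2} mentioned in the excerpt. First, an exact or near-exact algebraic identity for $\iSk \bn\psi\, dp$ that isolates the quadratic-in-$\theta$ main term and controls the remainder, say by $C\|\psi\|_{C^3}\,\theta^3\,(1+|v-v_*|)^{3}$ or a similar weight. Second, a uniform integrability bound showing that after multiplying by $ff_*|v-v_*|^\gamma\beta^\epsilon(\theta)\sin\theta$ and integrating over $\theta \in (0,\epsilon/2)$, $v,v_*\in\R^3$, the error term vanishes as $\epsilon\downarrow 0$; here the factor $\theta^3\beta^\epsilon(\theta)$ integrated against $d\theta$ on $(0,\epsilon/2)$ is $O(\epsilon)$ by the scaling, which kills the remainder. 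Third, passing the main term to the limit: $\int_0^{\epsilon/2}\theta^2\beta^\epsilon(\theta)\sin\theta\,b(\theta)^{-1}\cdots$ — more precisely, one checks that $\beta^\epsilon$ concentrates the $\theta^2$-moment at the correct value $8/\pi$ for every $\epsilon$, so no genuine limiting argument is needed for the main term, only dominated convergence to handle the $\sin\theta/\theta \to 1$ type corrections and the $(1-\cos\theta)/(\theta^2/2)\to 1$ corrections inside the expansion.

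The main obstacle is the second step: obtaining a remainder estimate for the Taylor expansion that is integrable against $ff_*$ uniformly in $\epsilon$, given only that $f$ has finite mass, second moment, and entropy — or, in the statement of this formal lemma, that $f$ is ``sufficiently smooth'' so that decay/integrability of $f$ is not an issue, and the difficulty is purely the bookkeeping of the $|v-v_*|$ weights. Since the post-collision displacement scales like $|v-v_*|$ and the kernel carries $|v-v_*|^\gamma$ with $\gamma$ possibly as negative as $-4$, one must check that the combination $|v-v_*|^{\gamma}\cdot|v-v_*|^{3}$ (from the cubic remainder in the displacement) and the leading $|v-v_*|^{\gamma+2}$ term are both controlled near $v = v_*$ and at infinity; near the diagonal the projection $\Pi[v-v_*]$ applied to $\nabla\psi(v)-\nabla\psi(v_*)$ gains an extra power of $|v-v_*|$ for smooth $\psi$, which is what makes the very soft potential range $\gamma\in[-4,-2)$ tractable. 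For this formal lemma, smoothness of $f$ and $\psi$ plus compact support (or Schwartz decay) of $\psi$ makes all of these integrals converge, so the proof is a matter of carefully organizing the expansion; the genuinely hard uniform-in-$\epsilon$ estimates under only the physical bounds \eqref{eq:bounds} are deferred to \Cref{sec:gammadiss,sec:gammamd}.
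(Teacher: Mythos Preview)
Your proposal is correct and follows essentially the same route as the paper's sketch: start from \eqref{eq:boltztest2}, pass to spherical coordinates, Taylor-expand $\bn\psi$ in the displacement $\sigma-k$, use the $p$-average to kill odd contributions and produce $\Pi[v-v_*]$ via $\iSk p\otimes p\,dp=\pi\Pi[k]$, and close with the normalization $\int_0^{\pi/2}\theta^2\beta(\theta)\,d\theta=8/\pi$. The paper organizes the expansion slightly differently---it uses the explicit rescaling $\theta=\epsilon\chi/\pi$ and splits $\bn\psi$ into a first-order piece $T_1$ (whose $k$-component $(\cos\theta-1)k$ survives the $\Sk$-average and contributes at order $\theta^2$) and a second-order remainder $T_2$ (\Cref{lem:epszerobnpsiavgv2})---but your bookkeeping with $\sin^2\theta$ and $(1-\cos\theta)$ captures exactly the same two sources of the quadratic term.
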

\begin{proof}[Sketch of the proof]
	We work at the level of~\eqref{eq:boltztest2} converging to~\eqref{eq:lantest2}. We change variables to spherical coordinates while factoring $B^\epsilon$ into its kinetic and angular parts, $B^\epsilon(v-v_*,\theta)\sin \theta = |v-v_*|^\gamma \beta^\epsilon(\theta)$;
	\begin{align*}
		\langle Q_B^\epsilon(f,f),\psi\rangle 
		&= \frac{1}{2}\iiRs ff_* |v-v_*|^\gamma \int_{\theta = 0}^{\epsilon/2}\beta^\epsilon(\theta) \iSk\bn \psi \, dp d\theta dv_* dv \\
		&= -\frac{\pi^2}{2}\iiRs ff_* |v-v_*|^\gamma \int_{\chi = 0}^{\pi/2} \beta(\chi) \frac{1}{\epsilon^2}\iSk \bn \psi \, dp d\chi dv_* dv.
	\end{align*}
	The last line involves the rescaling of $\theta = \epsilon\chi/\pi$. According to~\Cref{lem:epszerobnpsiavgv2} (in the particular case of functions $\psi = \psi(v)$), we have 
	\[
	\frac{1}{\epsilon^2\chi^2} \int_{\Sk} \bn \psi \, dp \overset{\epsilon\downarrow 0}{\to} \frac{1}{8\pi} (\nabla - \nabla_*)\cdot \left[ 
	|v-v_*|^2 \Pi[v-v_*](\nabla \psi - \nabla_* \psi_*)
	\right].
	\]
	Returning to the computations, the formal passage of the limit $\epsilon\downarrow 0$ inside the integral gives
	\begin{align*}
		\langle Q_B^\epsilon(f,f_*),\psi\rangle &\overset{\epsilon\downarrow 0}{\to}
		\frac{\pi}{16}\iiRs ff_* |v-v_*|^{2+\gamma}\left(
		\int_0^{\pi/2}\chi^2 \beta(\chi)d\chi
		\right) (\nabla-\nabla_*)\cdot [\Pi[v-v_*](\nabla-\nabla_*)\psi] dv_* dv \\
		&=  \frac{1}{2} \iiRs ff_* |v-v_*|^{2+\gamma} (\nabla-\nabla_*)\cdot [\Pi[v-v_*] (\nabla- \nabla_*)\psi]dv_* dv,
	\end{align*}
	where the last line follows from~\Cref{lem:pilem} and we have used the finite angular momentum transfer~\eqref{eq:betaint} with the normalization $C_\beta = \frac{\pi}{8}\int_0^\frac{\pi}{2} \chi^2 \beta(\chi) d\chi = 1$. Notationally, we recognize $\tn\cdot\tn\psi$ in the integrand.
\end{proof}
We invite the reader to (formally) verify the same passage of the grazing collision limit starting from the `first order' formulations of the collision operators; \eqref{eq:boltztest1} converging to~\eqref{eq:lantest1}. We reiterate from this proof that the averaged quantity $\iSk \bn \psi \, dp$ \textbf{behaves like a second order derivative of $\psi$} which we shall make precise in~\Cref{lem:epszerobnpsiavgv2}. Consequently, the term to control is the angular momentum transfer~\eqref{eq:betaint}. Furthermore, this approach involves \textbf{no derivatives of $f$} and is even amenable to weak-strong convergence pairs $\left(f^\epsilon, \, \iSk \bn \psi B \,  dp\right) \to \left(f, \, \tn\cdot \tn \psi\right)$ which we will exploit later on. By weak-strong convergence, we mean that $f^\epsilon$ converges to $f$ weakly whereas we shall consider sufficiently smooth fixed $\psi$ so that $\iSk \bn \psi B \, dp$ converges strongly to $\tn\cdot \tn \psi$.

We now provide the supplementary estimates that prove $\bn \overset{\epsilon\downarrow 0}{\to} \tn$ which we already used in the proof of~\Cref{lem:gcsketch}. Given the notation with $\bn, \,\tn$ we consider the more general case for these operators acting on functions $\psi(v,v_*)$ which will be used later; the specific case of functions of a single variable $\psi = \psi(v)$ follows as a corollary. To better facilitate this, let us consider the following classical volume-preserving change of variables to momentum and relative velocity coordinates also considered by Bobylev~\cite{B88} and Villani~\cite{V98}, for instance. Define
\[
x = \frac{v-v_*}{2}, \quad y = \frac{v+v_*}{2},
\]
and we also note in particular that $k = x/|x|$ which gives $\sigma - k = \sigma - \frac{x}{|x|}$. Recalling the definitions of the post-collision velocities, for fixed $\sigma \in \Stwo$, this leads to
\[
\left\{
\begin{array}{cl}
	v'     &= y + |x|\sigma  \\
	v_*'     &= y - |x|\sigma 
\end{array}
\right.\implies x' = |x|\sigma, \quad y' = y.
\]
Moreover, the differentiation in $\tn$ really only sees the $x$ direction in the sense that $\nabla_v - \nabla_{v_*} = \nabla_x$. Thus, abusing notation, we can write
\begin{equation}
	\label{eq:bntnx}
	\bn \psi(x,y) = \psi(|x|\sigma,y) + \psi(-|x|\sigma,y) - \psi(x,y) - \psi(-x,y), \quad \tn \psi = |2x|^{1+\frac{\gamma}{2}}\Pi[x]\nabla_x \psi(x,y),
\end{equation}
where we have identified $\psi = \psi(v,v_*) = \psi(x,y)$. Using this change of variables, we can prove
\begin{lemma}[Estimates for $\bn \psi$ adapted from~\cite{V98}]
	\label{lem:estbnv2}
	Fix $\psi \in C_c^\infty(\R^6)$, we have the first order estimate in $x$ and $\sigma - k$
	\[
	|\bn \psi| \le Lip_x(\psi) |2x||\sigma - k|,
	\]
	the second order estimate in $x$ but still first order in $\sigma - k$
	\[
	|\bn \psi| \le  \|D_x^2\psi\|_{L^\infty}|2x|^2 |\sigma - k|.
	\]
	By averaging over the circle $p\in \Sk$, we can obtain a second order estimate in $\sigma - k$
	\[
	\left|
	\frac{1}{2\pi}\int_{\Sk} \bn \psi \, dp
	\right| \le  \|D_x^2\psi\|_{L^\infty}|2x|^2|\sigma - k|^2,
	\]
	where the geometric meaning of  $\Sk$ and $p$ given $k$ and $\sigma$ are from Figures~\ref{fig:pic1}--\ref{fig:phi}.
\end{lemma}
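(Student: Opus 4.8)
The plan is to work with the momentum/relative-velocity coordinates $(x,y)$ introduced in \eqref{eq:bntnx}, so that $\bn\psi(x,y) = \psi(|x|\sigma,y)+\psi(-|x|\sigma,y)-\psi(x,y)-\psi(-x,y)$ and the $y$-variable is a passive parameter. Throughout, fix $y$ and write $\phi(x) := \psi(x,y)$, which is smooth and compactly supported uniformly in $y$. The first two estimates are Taylor expansions of $\phi$ around $x$ and around $-x$; the third is obtained by averaging the second-order term over the circle $p \in \Sk$ and exploiting that this averaging kills the piece of the expansion that is only first-order in $\sigma-k$.

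First I would prove the first-order estimate. Note $|x|\sigma - x = |x|(\sigma - k)$ and $-|x|\sigma - (-x) = -|x|(\sigma-k)$, so writing $\bn\psi = [\phi(|x|\sigma) - \phi(x)] + [\phi(-|x|\sigma) - \phi(-x)]$ and applying the mean value theorem to each bracket gives $|\bn\psi| \le \mathrm{Lip}_x(\phi)\,|x|\,|\sigma-k| + \mathrm{Lip}_x(\phi)\,|x|\,|\sigma-k| = \mathrm{Lip}_x(\psi)\,|2x|\,|\sigma-k|$, which is the claimed bound. For the second-order estimate I would Taylor expand to first order with integral remainder: $\phi(|x|\sigma) - \phi(x) = \nabla\phi(x)\cdot |x|(\sigma-k) + R_1$ with $|R_1| \le \tfrac12 \|D^2\phi\|_{L^\infty} |x|^2|\sigma-k|^2$, and similarly $\phi(-|x|\sigma) - \phi(-x) = -\nabla\phi(-x)\cdot |x|(\sigma-k) + R_2$ with the same remainder bound. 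Adding, the two gradient terms do not cancel in general, but $|\nabla\phi(x) - \nabla\phi(-x)| \le \|D^2\phi\|_{L^\infty}\,|2x|$, so the combined linear-in-$(\sigma-k)$ term is bounded by $\|D^2\phi\|_{L^\infty}|2x|\cdot |x||\sigma-k| $, and the remainders are lower order in $|\sigma-k|$; collecting constants yields $|\bn\psi| \le \|D_x^2\psi\|_{L^\infty}|2x|^2|\sigma-k|$.

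The averaged estimate is the crux. Using the spherical parametrization $\sigma = \cos\theta\,k + \sin\theta\,p$ from \eqref{eq:sigkp}, we have $|x|\sigma - x = |x|[(\cos\theta - 1)k + \sin\theta\,p]$; the component along $p$ is odd in $p$ (it flips sign under $p \mapsto -p$), while the component along $k$ is $O(\theta^2)$ by \eqref{eq:boundcos}. Expanding $\phi(|x|\sigma) - \phi(x)$ to second order in $|x|\sigma - x$, the only contribution that is genuinely first-order in $\theta$ (equivalently in $|\sigma-k|$, by \eqref{eq:equivmom}) is the term $\nabla\phi(x)\cdot |x|\sin\theta\,p$, which is linear and odd in $p$, hence integrates to zero over $\Sk$; the $k$-directional linear term is already $O(\theta^2) = O(|\sigma-k|^2)$, and the quadratic Taylor remainder is $O(|x|\sigma - x|^2) = O(|\sigma-k|^2)$. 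The same cancellation occurs in the $\phi(-|x|\sigma) - \phi(-x)$ bracket. Therefore $\frac1{2\pi}\int_{\Sk}\bn\psi\,dp$ is controlled by $\|D_x^2\psi\|_{L^\infty}|2x|^2|\sigma-k|^2$ after collecting the surviving terms and using $|\sigma - k|^2 = 2(1-\cos\theta)$ together with $\sin^2\theta \le \theta^2 \lesssim 1-\cos\theta$. The main obstacle is bookkeeping: one must be careful that the remainder constants are uniform in $y$ and that the odd-in-$p$ cancellation is applied to the correct (linear) term only, since the quadratic term in the expansion is even in $p$ and does survive — but it is already of the right order, so no further cancellation is needed. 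This last inequality is exactly the estimate recorded on \cite[Page 291]{V98}.
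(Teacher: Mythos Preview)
Your proposal is correct and follows essentially the same approach as the paper's proof: both arguments Taylor-expand $\phi(|x|\sigma)$ and $\phi(-|x|\sigma)$ around $\pm x$, bound the linear-in-$(\sigma-k)$ term $|x|(\sigma-k)\cdot(\nabla_x\phi(x)-\nabla_x\phi(-x))$ via the mean value theorem on the gradient difference, and for the averaged estimate decompose $\sigma-k=(\cos\theta-1)k+\sin\theta\,p$ so that the $\sin\theta\,p$ piece integrates to zero over $\Sk$ while the $k$-piece is already $O(|\sigma-k|^2)$. The paper packages the remainder via the integral form of the Fundamental Theorem of Calculus (their equation~\eqref{eq:Taylorbn}) rather than the Lagrange form you use, but this is purely cosmetic and the constant bookkeeping works out identically.
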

\begin{proof}
	We use~\eqref{eq:bntnx} and the Fundamental Theorem of Calculus to write
	\begin{align*}
		\bn \psi &= \psi(|x|\sigma) + \psi(-|x|\sigma) - \psi(x) - \psi(-x) \\
		&= \int_0^1 \frac{d}{dt}\left\{\psi(t|x|\sigma + (1-t)x) + \psi(-[t|x|\sigma + (1-t)x])\right\} dt \\
		&= (|x|\sigma - x) \cdot \int_0^1 \nabla_x \psi(t|x|\sigma + (1-t)x) - \nabla_x \psi(-[t|x|\sigma + (1-t)x])dt.
	\end{align*}
	The first estimate is directly obtained from here. For the second estimate, we continue by using Taylor's formula to replace
	\begin{align*}
		\nabla_x \psi(t|x|\sigma + (1-t)x) =&\nabla_x \psi(x) + t(|x|\sigma - x) \cdot \int_0^1 D_x^2 \psi (s[t|x|\sigma + (1-t)x] + (1-s)x)ds, \\
		\nabla_x \psi(-[t|x|\sigma + (1-t)x]) = &\nabla_x\psi(-x) -t(|x|\sigma - x)\cdot \int_0^1 D_x^2 \psi (-\{s[t|x|\sigma + (1-t)x] + (1-s)x\})ds.
	\end{align*}
	Thus, we have
		\begin{align}
			\label{eq:Taylorbn}
			\begin{split}
				\bn \psi &= |x|(\sigma - k) \cdot (\nabla_x \psi(x) - \nabla_x\psi(-x)) \\
				&\qquad + |x|^2(\sigma - k)\otimes (\sigma - k) : \\
				&\qquad \qquad :\underbrace{\int_0^1 t\int_0^1 D_x^2 \psi (s[t|x|\sigma + (1-t)x] + (1-s)x) + D_x^2 \psi (-\{s[t|x|\sigma + (1-t)x] + (1-s)x\}) dsdt}_{\le \|D_x^2\psi\|_{L^\infty}}.
			\end{split}
	\end{align}
	The second term is bounded by $\|D_x^2\psi\|_{L^\infty}|x|^2|\sigma-k|^2 \le 2\|D_x^2\psi\|_{L^\infty}|x|^2|\sigma-k|$  (recalling $|\sigma - k|^2 = 2(1- \cos\theta)$ and we have restricted $\theta \in [0,\pi/2]$) which gives the right contribution for the second estimate. Thus, it suffices to look at the first term which yields the right estimate after an application of the Mean Value Theorem.
	
	Finally, for the third estimate which improves the order with respect to $\sigma - k$, we take the average of~\eqref{eq:Taylorbn} over $\Sk$. Again, the second term is bounded in the right way ($\sigma - k$ depends on $p$, but $|\sigma - k|$ does not), so we only focus on the first term
	\begin{align*}
		\frac{1}{2\pi}\int_{\Sk}|x|(\sigma- k)\cdot (\nabla_x\psi(x) - \nabla_x\psi(-x))dp 
		&= \frac{|x|}{2\pi}\iSk ((\cos \theta - 1)k + \sin\theta p) \cdot (N_k \, k + N_p \, p)\, dp \\
		&= \frac{|x|}{2\pi}\iSk (\cos\theta -1)N_k + \sin\theta N_p \, dp.
	\end{align*}
	where we have decomposed the vectors in the inner product under this geometry, see~\eqref{eq:sigkp}, as
	\[
	N_k = (\nabla_x \psi(x) - \nabla_x\psi(-x))\cdot k, \quad N_p = (\nabla_x \psi(x) - \nabla_x\psi(-x)) \cdot p. 
	\]
	Continuing, notice that the $N_p$ term is a linear combination of $\cos\phi$ and $\sin\phi$ ($\phi$ being the azimuthal angle in~\Cref{fig:phi}) and that the integral can be written as
	\[
	\iSk dp = \int_0^{2\pi} d\phi.
	\]
	Hence, the second term integrates to zero and we focus on the first term. The Mean Value Theorem gives $|N_k| = |k\cdot (\nabla_x\psi(x) - \nabla_x\psi(-x))| \le |2x|\|D_x^2\psi\|_{L^\infty}$ which yields
	\begin{align*}
		\left|
		\frac{|x|}{2\pi}\iSk (\sigma - k)\cdot (\nabla_x\psi(x) - \nabla_x\psi(-x))dp
		\right| & \le 2|x|^2\|D_x^2\psi\|_{L^\infty}|1 - \cos\theta| = \|D_x^2\psi\|_{L^\infty}|x|^2|\sigma - k|^2.
	\end{align*}
\end{proof}
\begin{lemma}[Behaviour of $\bn \psi/\epsilon$]
	\label{lem:epszerobnpsiv2}
	Under the previous notations, for $\psi \in C_c^\infty(\R^6),\, \epsilon>0,\, \chi = \pi\theta/\epsilon$, where $\cos \theta = k\cdot \sigma$ and $\theta \in [0,\epsilon/2]$, we have
	\[
	\frac{1}{\epsilon}\bn \psi = \frac{|x|}{\epsilon}\left(
	\left(
	\cos \frac{\epsilon\chi}{\pi} - 1
	\right)k + \sin \frac{\epsilon\chi}{\pi}\, p
	\right)\cdot \nabla_x [\psi(x) + \psi(-x)] + \mathcal{O}(\|D_x^2\psi\|_{L^\infty}\epsilon|x|^2).
	\]
	In particular, we have the convergence
	\[
	\frac{1}{\epsilon}\bn \psi \overset{\epsilon \downarrow 0}{\to} \frac{\chi}{\pi} |x| \, p\cdot \nabla_x [\psi(x) + \psi(-x)], \quad \text{pointwise }v, \,v_* \in \R^3.
	\]
	For functions $\psi = \psi(v)$, this is equivalent to
	\[
	\frac{1}{\epsilon}\bn \psi \overset{\epsilon\downarrow 0}{\to} \frac{\chi}{2\pi} |v-v_*| \,p\cdot (\nabla - \nabla_*) \psi.
	\]
\end{lemma}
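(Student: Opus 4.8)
The plan is to deduce the lemma directly from the Taylor expansion \eqref{eq:Taylorbn} established in the proof of \Cref{lem:estbnv2}. Working in the momentum/relative-velocity variables $x = (v-v_*)/2$, $y = (v+v_*)/2$ of \eqref{eq:bntnx}, that identity reads
\[
\bn \psi = |x|(\sigma - k)\cdot \nabla_x[\psi(x) + \psi(-x)] + |x|^2\,(\sigma - k)\otimes(\sigma - k):R ,
\]
where $R$ denotes the double time-integral of second derivatives appearing in \eqref{eq:Taylorbn}, so that $|R|\le\|D_x^2\psi\|_{L^\infty}$. First I would insert the spherical-coordinate decomposition $\sigma - k = (\cos\theta - 1)k + \sin\theta\,p$ from \eqref{eq:sigkp} together with the substitution $\theta = \epsilon\chi/\pi$, and divide through by $\epsilon$: the first term then reproduces verbatim the main term displayed in the statement. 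For the remainder term, I would bound $|(\sigma-k)\otimes(\sigma-k)| = |\sigma-k|^2 = 2(1-\cos\theta)\le \theta^2 \le \epsilon^2/4$ using \eqref{eq:boundcos} and the constraint $\theta\in[0,\epsilon/2]$; after dividing by $\epsilon$ this contributes exactly the advertised error $\mathcal{O}(\|D_x^2\psi\|_{L^\infty}\epsilon|x|^2)$.

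Next, for the pointwise convergence I would fix $\chi$ and $v,v_*$ and let $\epsilon\downarrow 0$, so $\theta = \epsilon\chi/\pi\to 0$. The $k$-component of the main term is $\tfrac{|x|}{\epsilon}(\cos(\epsilon\chi/\pi) - 1)\,k\cdot\nabla_x[\psi(x)+\psi(-x)]$, which is $\mathcal{O}(\epsilon)$ by $|\cos t - 1|\le t^2/2$ and hence vanishes; the $p$-component converges because $\epsilon^{-1}\sin(\epsilon\chi/\pi)\to\chi/\pi$, producing the limit $\tfrac{\chi}{\pi}|x|\,p\cdot\nabla_x[\psi(x)+\psi(-x)]$, while the remainder tends to $0$ as well. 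Finally, to obtain the stated form for a function of a single velocity $\psi = \psi(v)$, I would use the identification $\psi(x,y) = \psi(y+x)$, under which $\nabla_x[\psi(x)+\psi(-x)] = (\nabla\psi)(v) - (\nabla\psi)(v_*) = (\nabla - \nabla_*)\psi$, together with $|x| = |v-v_*|/2$, so the limit becomes $\tfrac{\chi}{2\pi}|v-v_*|\,p\cdot(\nabla - \nabla_*)\psi$.

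I do not expect a genuine obstacle here: the lemma is essentially a bookkeeping corollary of \eqref{eq:Taylorbn}. The only points requiring a little care are keeping the Jacobian factor $|x| = |v-v_*|/2$ straight when passing from functions on $\R^6$ to functions of one velocity, and noticing that the $k$-component of $\sigma - k$, which is only second order in $\theta$, must be retained in the exact identity at fixed $\epsilon$ but disappears in the limit.
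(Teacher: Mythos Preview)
Your proposal is correct and follows exactly the paper's approach: both start from the Taylor identity \eqref{eq:Taylorbn}, insert $\sigma-k=(\cos\theta-1)k+\sin\theta\,p$ with $\theta=\epsilon\chi/\pi$, and bound the quadratic remainder via $|\sigma-k|^2\le\theta^2\lesssim\epsilon^2$. If anything, you spell out more of the bookkeeping (the vanishing of the $k$-component and the single-velocity specialization) than the paper does.
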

\begin{proof}
	Firstly, we recall the size estimates of $\sigma - k$ from~\eqref{eq:equivang} and~\eqref{eq:equivmom} giving
	\[
	|\sigma - k|^2 = 2(1 - \cos\theta) \le\theta^2 = \epsilon^2 \frac{\chi^2}{\pi^2}.
	\]
	Using again $\sigma - k = (\cos\theta - 1)k + \sin\theta \, p$ and the substitution $\theta = \epsilon \chi/\pi$, we obtain the first identity starting from~\eqref{eq:Taylorbn}.
	
	Passing to the limit $\epsilon\downarrow 0$ is a matter of recalling Taylor expansions.
\end{proof}
\begin{lemma}[Behaviour of $\frac{1}{\epsilon^2}\iSk \bn \psi \, dp$]
	\label{lem:epszerobnpsiavgv2}
	Under the previous notations, for $\psi \in C_c^\infty(\R^6),\, \epsilon>0, \,\cos\theta = k\cdot \sigma,\, \theta \in [0,\epsilon/2],\, \chi = \pi\theta/\epsilon$, we have
	\[
	\frac{1}{\epsilon^2\chi^2}\iSk \bn \psi dp \overset{\epsilon\downarrow 0}{\to} \frac{1}{2\pi}\nabla_x\cdot (|x|^2\Pi[x]\nabla_x [\psi(x,y)+\psi(-x,y)]).
	\]
	For functions $\psi = \psi(v)$, this is equivalent to
	\[
	\frac{1}{\epsilon^2\chi^2}\iSk \bn \psi dp \overset{\epsilon\downarrow 0}{\to} \frac{1}{8\pi} (\nabla- \nabla_*)\cdot (|v-v_*|^2\Pi[v-v_*](\nabla-\nabla_*)\psi).
	\]
\end{lemma}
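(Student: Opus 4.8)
The plan is to push the expansion of~\Cref{lem:epszerobnpsiv2} one order further. In that lemma the pointwise limit of $\tfrac1\epsilon\bn\psi$ is of order one but odd in $p$, so $\tfrac1{\epsilon^2}\bn\psi$ diverges pointwise; only the circle average over $\Sk$ makes the odd part cancel and leaves a quantity genuinely of size $\theta^2\sim\epsilon^2$. Fix $(x,y)\in\R^6$ and set $G(x):=\psi(x,y)+\psi(-x,y)$, an even function of $x$, so that $\nabla_x G(x)=(\nabla_x\psi)(x,y)-(\nabla_x\psi)(-x,y)$ and $D^2_x G(x)=(D^2_x\psi)(x,y)+(D^2_x\psi)(-x,y)$. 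Starting from the Taylor identity~\eqref{eq:Taylorbn} of~\Cref{lem:estbnv2}, I would first rewrite
\[
\bn\psi=|x|\,(\sigma-k)\cdot\nabla_x G(x)+|x|^2\,(\sigma-k)\otimes(\sigma-k):R_\theta(x;p),
\]
where $R_\theta(x;p)$ denotes the double integral of second derivatives appearing in~\eqref{eq:Taylorbn}. Since $\psi\in C_c^\infty(\R^6)$, its integration points all lie within $|x|\theta$ of $x$, so $R_\theta(x;p)\to\tfrac12 D^2_x G(x)$ uniformly in $p\in\Sk$ and locally uniformly in $(x,y)$ as $\theta\downarrow0$.

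Next I would substitute $\sigma-k=(\cos\theta-1)k+\sin\theta\,p$ and average over $p\in\Sk$, using the elementary identities $\tfrac1{2\pi}\int_{\Sk}p\,dp=0$ and $\tfrac1{2\pi}\int_{\Sk}p\otimes p\,dp=\tfrac12\Pi[x]$ (the latter because $\{k\}^\perp=\{x\}^\perp$ is two-dimensional and $\Pi[x]$ is the identity on it). All terms odd in $p$ drop out, and writing $R_\theta=\tfrac12 D^2_x G+E_\theta$ with $\|E_\theta\|_{L^\infty}\to0$ and recalling $|\sigma-k|^2\le\theta^2$, I would obtain
\[
\frac1{2\pi}\int_{\Sk}\bn\psi\,dp=|x|(\cos\theta-1)\,k\cdot\nabla_x G+\tfrac12|x|^2(\cos\theta-1)^2(k\otimes k):D^2_x G+\tfrac14|x|^2\sin^2\theta\,\Pi[x]:D^2_x G+o(\theta^2).
\]
Dividing by $\epsilon^2\chi^2=\pi^2\theta^2$, sending $\theta\downarrow0$, and using $1-\cos\theta=\tfrac{\theta^2}{2}+O(\theta^4)$ and $\sin^2\theta=\theta^2+O(\theta^4)$ (so that the $(\cos\theta-1)^2$ contribution, being $O(\theta^4)$, disappears), I expect to land on
\[
\frac1{\epsilon^2\chi^2}\int_{\Sk}\bn\psi\,dp\;\overset{\epsilon\downarrow 0}{\to}\;\frac1{2\pi}\Big(-2|x|\,k\cdot\nabla_x G(x)+|x|^2\,\Pi[x]:D^2_x G(x)\Big).
\]

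The last task is to recognise the right-hand side as $\tfrac1{2\pi}\nabla_x\cdot(|x|^2\Pi[x]\nabla_x G)$: writing $|x|^2\Pi[x]=|x|^2 I-x\otimes x$ and differentiating in $\R^3$ gives $\nabla_x\cdot(|x|^2\Pi[x]\nabla_x G)=-2(x\cdot\nabla_x G)+|x|^2\Delta_x G-x^{\mathsf T}D^2_x G\,x=-2|x|\,k\cdot\nabla_x G+|x|^2\,\Pi[x]:D^2_x G$, the dimension $d=3$ entering only through the coefficient of $x\cdot\nabla_x G$ (cf.~\Cref{lem:pilem}). The single-variable statement is then immediate by specialisation: for $\psi=\psi(v)$ one has $G(x)=\psi(v)+\psi(v_*)$, $\nabla_x=\nabla-\nabla_*$, $\Pi[x]=\Pi[v-v_*]$ and $|x|^2=\tfrac14|v-v_*|^2$, so $\tfrac1{2\pi}\nabla_x\cdot(|x|^2\Pi[x]\nabla_x G)=\tfrac1{8\pi}(\nabla-\nabla_*)\cdot(|v-v_*|^2\Pi[v-v_*](\nabla-\nabla_*)\psi)$.

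I expect the main obstacle to be the uniform-in-$p$ (and locally-uniform-in-$(x,y)$) control of the remainder $R_\theta$, so that after dividing by $\theta^2$ its error does not contaminate the limit; everything else is bookkeeping of the even/odd parity of the integrand in $p$ together with the Taylor expansions of $\cos\theta$ and $\sin\theta$. One should also keep in mind that the surviving contribution of the \emph{first}-order term is of order $\theta^2$ (not $\theta$), which is precisely why the averaging over $\Sk$ is indispensable here.
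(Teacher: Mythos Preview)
Your proposal is correct and follows essentially the same route as the paper: you start from the Taylor identity~\eqref{eq:Taylorbn}, split $\bn\psi$ into its first-order ($T_1$) and second-order ($T_2$) parts, average over $\Sk$ using $\int_{\Sk}p\,dp=0$ and $\int_{\Sk}p\otimes p\,dp=\pi\Pi[x]$, and finally recognise the limit as a divergence via the product-rule identity $\nabla_x\cdot(|x|^2\Pi[x])=-2x$. The only cosmetic differences are your convenient shorthand $G(x)=\psi(x,y)+\psi(-x,y)$ and your explicit expansion of $(\sigma-k)\otimes(\sigma-k)$ into $k\otimes k$, $k\otimes p+p\otimes k$, $p\otimes p$ pieces, whereas the paper argues slightly more informally that $\epsilon^{-1}(\sigma-k)\to(\chi/\pi)p$ and invokes dominated convergence for $T_2$.
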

\begin{proof}
	We start from~\eqref{eq:Taylorbn}, the terms there are recalled
		\begin{align*}
			\bn \psi &= \overbrace{|x|(\sigma - k) \cdot (\nabla_x \psi(x) - \nabla_x\psi(-x))}^{=: T_1} \\
			&\qquad + |x|^2(\sigma - k)\otimes (\sigma - k) : \\
			&\qquad \qquad :\int_0^1 t\int_0^1 D_x^2 \psi (s[t|x|\sigma + (1-t)x] + (1-s)x) + D_x^2 \psi (-\{s[t|x|\sigma + (1-t)x] + (1-s)x\}) dsdt \\
			&=: T_1 + T_2.
	\end{align*}
	The idea is to individually take the limits
	\[
	\lim_{\epsilon \downarrow 0}\frac{1}{\epsilon^2}\iSk T_i dp, \quad i=1, \,2. 
	\]
	Let us start with $T_2$ which will involve a Dominated Convergence argument. By the triangle inequality and~\eqref{eq:equivang}
	\[
	|T_2| \le \|D_x^2\psi\|_{L^\infty}|x|^2|\sigma - k|^2 = \epsilon^2\|D_x^2\psi\|_{L^\infty}|x|^2  \frac{\chi^2}{\pi^2}.
	\]
	This is an integrable majorant when multiplied against $\epsilon^{-2}$ so we obtain
	\[
	\lim_{\epsilon\downarrow 0}\iSk \frac{T_2}{\epsilon^2}dp = \frac{\chi^2}{2\pi^2}|x|^2\iSk p\otimes p dp : (D_x^2\psi(x) + D_x^2\psi(-x)). 
	\]
	This is because, recalling the expression of $\sigma$ with respect to $k, \, p$ in~\eqref{eq:sigkp} and the scaling $\theta \sim \epsilon$, we see
	\[
	\sigma - k = \underbrace{(\cos\theta - 1)}_{\sim \epsilon^2} \, k + \underbrace{\sin \theta}_{\sim\epsilon} \, p.
	\]
	Therefore, the remaining contribution is $\lim_{\epsilon\downarrow 0} \epsilon^{-1}(\sigma - k) = p.$ Using~\Cref{lem:pilem}, we can compute $\iSk p\otimes p \, dp$ so that we obtain
	\[
	\lim_{\epsilon\downarrow 0}\iSk \frac{T_2}{\epsilon^2\chi^2}dp = \frac{|x|^2}{2\pi}\Pi[x]: (D_x^2\psi(x) + D_x^2\psi(-x)).
	\]
	Turning to the $T_1$ term, we directly integrate similar to the proof of~\Cref{lem:estbnv2}. Copying the notation there, we have
	\begin{align*}
		\iSk \frac{T_1}{\epsilon^2}dp &= \frac{1}{\epsilon^2}\iSk |x|(\sigma - k)\cdot (\nabla_x\psi(x) - \nabla_x\psi(-x)) dp = \frac{|x|N_k}{\epsilon^2}\iSk (\cos\theta - 1) dp \\
		&= 2\pi (x\cdot(\nabla_x\psi(x) - \nabla_x\psi(-x))) \frac{\cos\theta - 1}{\epsilon^2}
	\end{align*}
	recalling $N_k = (\nabla_x\psi(x) - \nabla_x(\psi(-x))\cdot k$ (independent of $p$!) and the other contribution vanishes. Changing variables $\theta = \epsilon \chi /\pi$, we use the fact that
	\[
	\frac{\cos(\epsilon\chi/\pi) - 1}{\epsilon^2} \overset{\epsilon\downarrow 0}{\to} -\frac{\chi^2}{2\pi^2}
	\qquad \mbox{to obtain} \qquad
	\lim_{\epsilon\downarrow 0}\iSk \frac{T_1}{\epsilon^2\chi^2}dp = -\frac{1}{\pi}x\cdot (\nabla_x\psi(x) - \nabla_x\psi(-x)).
	\]
	Putting both terms together, we have
	\begin{align*}
		&\frac{1}{\epsilon^2\chi^2}\iSk \bn \psi dp \overset{\epsilon\downarrow 0}{\to} \frac{|x|^2}{2\pi}\Pi[x]: (D_x^2\psi(x) + D_x^2\psi(-x)) -\frac{1}{\pi}x\cdot (\nabla_x\psi(x) - \nabla_x\psi(-x)).
	\end{align*}
	In $d$ dimensions, a direct computation shows $\nabla_x \cdot |x|^2 \Pi[x] = -(d-1)x$, which in this case for $d = 3$ allows one to recognize the product rule and conclude.
\end{proof}

\section{Gradient flow terminology and structure}
\label{sec:gradflow}
We first recall the abstract framework of gradient flows that can be found from~\cite[Chapter 1]{AGS08}. After these definitions have been clarified, we shall recall some of the theory of the Boltzmann and Landau continuity equations in~\cite{E19,CDDW20}. This perspective is crucial for the compactness results in~\Cref{sec:cpct}. Let us denote $(X,d)$ to be a complete (pseudo)-metric space $X$ with (pseudo)-metric $d$. We will refer to $a < b \in \R$ as endpoints of some interval. We denote $F:X\to (-\infty,\infty]$ to be a fixed proper function.
\begin{definition}[Absolutely continuous curve]
	\label{defn:abscontcurve}
	A function $\mu : t\in(a,b) \mapsto \mu_t\in X$ is said to be an \textit{absolutely continuous curve} if there exists $m\in L^1(a,b)$ such that for every $s\leq t\in(a,b),$
	\[
	d(\mu_t,\mu_s) \leq \int_s^tm(r)dr.
	\]
\end{definition}
Among all admissible $m$ in~\Cref{defn:abscontcurve}, one can make the following minimal selection by the Lebesgue differentiation theorem.
\begin{definition}[Metric derivative]\label{defn:metder}
	For an absolutely continuous curve $\mu : (a,b) \to X$, we define its \textit{metric derivative} at every $t\in(a,b)$ by
	\[
	|\dot{\mu}|(t) := \lim_{h\to0}\frac{d(\mu_{t+h},\mu_t)}{|h|}.
	\]
\end{definition}
We refer to~\cite[Theorem 1.1.2]{AGS08} for more properties of the metric derivative.
\begin{definition}[Strong upper gradient]
	\label{defn:sug}
	We say the function $g:X\to [0,\infty]$ is a \textit{strong upper gradient} with respect to $F$ if for every absolutely continuous curve $\mu : t\in (a,b) \mapsto \mu_t \in X$, we have that $\sqrt{g} \circ \mu :(a,b) \to [0,\infty]$ is Borel measurable and the following inequality holds
	\[
	|F[\mu_t] - F[\mu_s]| \leq \int_s^t \sqrt{g(\mu_r)}|\dot{\mu}|(r)dr, \quad \forall a < s \leq t <b.
	\]
\end{definition}

We now turn to the generalized notions of continuity equation and action as developed by~\cite{E19,CDDW20}. We denote by $\mathscr{P}_p$ the space of probability measures on $\R^3$ with finite $p>0$ moments endowed with the weak-* topology as members of the dual of bounded continuous functions (denoted $C_b(\R^3)$ or just $C_b$ if no confusion arises). We denote by $\mathscr{P}$ the space of probability measures without any moments assumptions. For sequences $\{\mu_n\}_{n\in \mathbb{N}} \subset \mathscr{P}$ converging in this topology to $\mu \in \mathscr{P}$, we write $\mu_n \overset{\sigma}{\rightharpoonup} \mu$. We will abuse notation if each of these measures are absolutely continuous with respect to Lebesgue measure; if $\mu_n = f_n \mathcal{L}$ and $\mu = f \mathcal{L}$ we may write $f_n \overset{\sigma}{\rightharpoonup} f$ meaning $\mu_n \overset{\sigma}{\rightharpoonup} \mu$. We denote $\mathcal{M}_B$ the space of (scalar) signed Radon measures on $\R^6\times \Stwo$ endowed with the weak-* topology as members of the dual of compactly supported continuous functions (denoted $C_0(\R^6\times \Stwo)$ or just $C_0$ if no confusion arises). We denote $\mathcal{M}_L$ the space of $\R^3$-valued signed Radon measures on $\R^6$ endowed with the weak-* topology as members of the dual of compactly supported continuous functions.
\begin{definition}[Generalized continuity equations]
	\label{def:ctyeqn}
	For Borel curves in time $t\in [0,T]$, $\mu : t \mapsto \mu_t \in \mathscr{P}$ and $M : t \mapsto M_t \in \mathcal{M}_B$ we say $(\mu, \,M)\in CRE_T$ (or $CRE$ if $T=1$), if they satisfy Erbar's \textit{collision rate equation}~\cite{E19}
	\[
	\partial_t \mu_t + \frac{1}{4}\bn \cdot M_t = 0,
	\]
	in the distributional sense. By this, we mean that for any $\phi \in C_c^\infty((0,T)\times \R^3)$,
	\[
	\int_0^T\int_{\R^3}\partial_t \phi(t,v) d\mu_t(v) dt + \frac{1}{4}\int_0^T\iiRs \iS \bn \phi(t,v,v_*,\sigma) dM_t(v,v_*,\sigma) dt = 0.
	\]
	Similarly, for Borel curves $\mu_t \in \mathscr{P}, \, M_t \in \mathcal{M}_L$, we say $(\mu, \,M) \in GCE_T$ (or $GCE$ if $T=1$), if they satisfy the \textit{grazing continuity equation}~\cite{CDDW20}
	\[
	\partial_t \mu_t + \frac{1}{2}\tn \cdot M_t = 0,
	\]
	in the distributional sense. For curves $(\mu_t)_t$ in $CRE_T$ or $GCE_T$, we also insist that the second moment is finite and non-increasing
	\[
	\int_{\R^3}|v|^2d\mu_t(v) \le  \int_{\R^3}|v|^2d\mu_s(v)\qquad\mbox{for any $0\le s\le t\le T$.}
	\]
\end{definition} 
Throughout the document, we will actually refer to probability measures with densities $\mu = f \mathcal{L}$ against Lebesgue measure. As well, if $M \in \mathcal{M}_B$ has a density against Lebesgue measure on $\R^6\times \Stwo$, we will refer to that density also by $M$ and similarly for measures in $\mathcal{M}_L$. For $(\mu,M)\in CRE_T$, consider $\tau = \mu(dv)\mu(dv_*) + |M|$ as a Radon measure on $\R^6\times \Stwo$ that dominates both $\mu\otimes \mu$ and $M$. Taking $N\tau=M$ and $g \tau=\mu(dv)\mu(dv_*)$, the densities of $M$ and $\mu\otimes\mu$ with respect to $\tau$, we  can define the \textit{Boltzmann action} of the curve $\mu$ by
\[
\mathcal{A}_B(\mu,M) := \frac{1}{4}\iiRs \iS \frac{|N|^2}{\left[\frac{g' - g}{\log g' - \log g}\right]B}d\sigma dv_* dv,
\]
where the square bracketed quantity is the logarithmic mean between $g$ and $g'=g(v',v_*')$. More precisely, writing $L(s,t)$ as the logarithmic mean between $s, t>0$ the integrand of $\mathcal{A}_B$ should be the function $\alpha(N, g, g')/B$ where $\alpha$ is the convex, lower semi-continuous, 1-homogeneous function
	\[
	\alpha(u,s,t) := \left\{
	\begin{array}{cc}
		\frac{|u|^2}{4L(s,t)}, &L(s,t)\neq 0, \\
		0, 	&L(s,t) = 0, \, u = 0 \\
		+\infty, 	&L(s,t)=0, \, u \neq 0
	\end{array}
	\right., \quad \mathcal{A}_B(\mu, M) = \iiRs \iS \alpha(N, g, g')/B.
	\]
	The appearance of $B$ is free and we have chosen to place it in the denominator so that the collision rate equation may be written without the collision kernel. In the case where both $M$ and $\mu$ have densities with respect to Lebesgue (denoting again $\mu = f\mathcal{L}$), we write
\[
\Lambda = \Lambda(f) = \frac{f'f_*' - ff_*}{\log f'f_*' - \log ff_*}.
\]
For $\mu^\epsilon = f^\epsilon \mathcal{L}$ corresponding to curves with respect to the collision kernel $B^\epsilon$, we will write $\Lambda^\epsilon = \Lambda(f^\epsilon).$ Analogously, for $(\mu,M) \in GCE_T$, we consider $\tau = \mu\otimes \mu + |M|$ a signed Radon measure on $\R^6$ that dominates both $\mu\otimes \mu$ and $M$, and we set $g\tau= \mu(dv) \mu(dv_*)$ and $N\tau=M$. We define the \textit{Landau action of the curve $\mu$} by
\[
\mathcal{A}_L(\mu,M) := \frac{1}{2}\iiRs \frac{|N|^2}{g}dv_* dv.
\]
More precisely, the integrand of $\mathcal{A}_L$ should be the function $\kappa(N, g)$ where $\kappa$ is the convex, lower semi-continuous, 1-homogeneous function
	\[
	\kappa(u, s) := \left\{
	\begin{array}{cc}
		\frac{|u|^2}{2s}, 	&s\neq 0 \\
		0, 	&s = 0, \, u = 0 \\
		+\infty, 	&s = 0, \, u \neq 0
	\end{array}
	\right., \quad \mathcal{A}_L(\mu, M) = \iiRs \kappa(N, g).
	\]
These definitions for the action functionals are independent of dominating measure $\tau$ since the integrands are 1-homogeneous. In the works of~\cite{E19,CDDW20}, (pseudo)-metrics denoted by $d_B$ and $d_L$ were constructed to give a gradient flow notion of solutions to the Boltzmann and Landau equations, respectively. For example, in the Landau case the distance is defined by
\[
d_L^2(\lambda_1,\lambda_2) := \inf \left\{ T
\int_0^T\mathcal{A}_L(\mu(t),M(t)) dt \, \left|
\begin{array}{c}
	(\mu,M) \in GCE_T,  \\
	\mu(0) = \lambda_1, \quad \mu(T) = \lambda_2
\end{array}
\right.
\right\}, \quad \lambda_1,\lambda_2 \in \mathscr{P}_2.
\]
The Boltzmann metric is defined analogously, by replacing $\mathcal{A}_L$ by $\mathcal{A}_B$ and $GCE_T$ by $CRE_T$. We will write $d_{B^\epsilon}$ or $d_\epsilon$ for the Boltzmann metric corresponding to the collision kernel $B^\epsilon$. We have already used the notation in~\eqref{eq:EDE}, but we make precise here that $|\dot{f}|_\epsilon, |\dot{f}|_L$ refer to the $d_\epsilon,d_L$-metric derivatives for a curve $\mu_t = f_t\mathcal{L}$, respectively. 

\begin{lemma}[Propositions A.9, A.11, and Corollary A.10 of~\cite{E19}]
	\label{lem:Erbarmetvel}
	For a fixed collision kernel $B$, $t \in [0,T]\mapsto f_t\mathcal{L} \in \mathscr{P}$ is absolutely continuous with respect to the Boltzmann distance $d_B$ if and only if there is a family of mobilities $M_t^B \in \mathcal{M}_B$ such that $(f, \, M^B) \in CRE_T$ with finite total action
	\[
	\int_0^T \mathcal{A}_B(f,M_B) dt = \frac{1}{4}\int_0^T\iiRs \iS \frac{|M^B|^2}{\Lambda(f) B}d\sigma dv_* dv dt < +\infty.
	\]
	Moreover, there is a unique $\tilde{M}^B\in \mathcal{M}_B$ such that
	\[
	|\dot{f}|_B^2(t) = \mathcal{A}_B(f,\tilde{M}_B) =  \frac{1}{4}\iiRs \iS \frac{|\tilde{M}^B|^2}{\Lambda(f) B} d\sigma dv_* dv.
	\]
	Furthermore, under the class of admissible $M^B\in\mathcal{M}_B$ (i.e. $(f,M^B)\in GCE_T$), $\tilde{M}^B$ is characterized by the minimization property
	\[
	\iiRs \iS \frac{|\tilde{M}^B|^2}{\Lambda(f) B} d\sigma dv_* dv \le \iiRs \iS \frac{|\tilde{M}^B+\eta|^2}{\Lambda(f) B} d\sigma dv_* dv.
	\]
	for any $\eta\in \mathcal{M}_B$ which is $\bn\cdot$-free, that is to say
	\[
	\iiRs \iS \bn \xi\; d\eta = 0, \quad \forall \xi \in C_c^\infty(\R^3).
	\]
	If a measure $M\in\mathcal{M}_B$ satisfies the minimization property, then $M = U \Lambda(f) B$ where
	\[
	U \in \overline{\{
		\bn \phi \, | \, \phi \in C_c^\infty(\R^3)
		\}}^{L^2(\Lambda(f)Bd\sigma dv_* dv)}.
	\]
	The analogous statements for the Landau equation hold in the sense of the following replacements: the collision rate equation $\to$ grazing continuity equation, $\mathcal{A}_B \to \mathcal{A}_L$, $|\dot{f}|_B \to |\dot{f}|_L$, $\bn \to \tn$, $\Lambda B d\sigma dv_* dv \to ff_* dv_* dv$.
\end{lemma}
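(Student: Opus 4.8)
The plan is to deduce the Boltzmann half of the statement from the cited results of~\cite{E16} and to obtain the Landau half by transcribing the same arguments into the $GCE$ framework of~\cite{CDDW20}; for Boltzmann the three claims are, respectively, Propositions~3.20 and~3.22 and Corollary~3.21 of~\cite{E16}, and I would recall the structure of their proofs so that the Landau analogues become transparent. The first assertion---that $d_B$-absolute continuity of $t\mapsto f_t\mathcal L$ is equivalent to the existence of a mobility $M^B$ with $(f,M^B)\in CRE_T$ of finite total action---is the kinetic Benamou--Brenier characterization. One direction is immediate from the definition of $d_B$ as an infimum over $CRE_T$: any admissible $(f,M^B)$ yields $d_B(f_s\mathcal L,f_t\mathcal L)\le\int_s^t\sqrt{\mathcal A_B(f_r,M_r^B)}\,dr$, so a finite-action mobility forces absolute continuity with majorant $m=\sqrt{\mathcal A_B(f_\cdot,M^B_\cdot)}$ in the sense of~\Cref{defn:abscontcurve}. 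For the converse I would start from the $L^1$ majorant $m$, reparametrize the curve to constant metric speed, use a measurable selection to pick a Borel family of near-optimal mobilities realizing $d_B$ on small intervals, and pass to the limit, obtaining a curve in $CRE_T$ whose action is dominated by $m^2$.

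Freezing this construction at a Lebesgue point $t$ then identifies $|\dot f|_B(t)$ with the infimum of $\sqrt{\mathcal A_B(f_t,\cdot)}$ over mobilities $M$ compatible with the prescribed value of $\partial_t f_t$ through $\partial_t f_t+\frac14\bn\cdot M=0$; this infimum is attained at a unique $\tilde M^B$ because, after substituting the density $N=M/(\Lambda(f)B)$, the action is a strictly convex, coercive, lower semicontinuous quadratic functional on a closed affine set. The variational characterization of $\tilde M^B$ is then pure Hilbert-space geometry: writing $M=U\,\Lambda(f)B$, the total action equals $\frac14\|U\|^2$ in the Hilbert space $\mathscr{H}:=L^2(\Lambda(f)B\,d\sigma\,dv_*\,dv)$, and a perturbation $\eta\in\mathcal M_B$ keeps $(f,M+\eta)$ in $CRE_T$ precisely when $\eta$ is $\bn\cdot$-free, i.e.\ when $\eta/(\Lambda(f)B)$ is $\mathscr{H}$-orthogonal to $V:=\overline{\{\bn\phi:\phi\in C_c^\infty(\R^3)\}}^{\mathscr{H}}$. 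Hence the unique minimizer is the orthogonal projection of any admissible representative onto $V$, which simultaneously gives the minimization inequality against all $\bn\cdot$-free $\eta$ and the representation $U\in V$; conversely any $M$ realizing the minimization inequality equals this projection and therefore has the stated form.

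For the Landau equation I would run the same three steps after the dictionary $CRE_T\rightsquigarrow GCE_T$, $\mathcal A_B\rightsquigarrow\mathcal A_L$, $\bn\rightsquigarrow\tn$, $\Lambda(f)B\,d\sigma\,dv_*\,dv\rightsquigarrow ff_*\,dv_*\,dv$, following~\cite{CDDW20}: there $d_L$ is defined precisely as the $GCE_T$-infimum of $\int_0^T\mathcal A_L$, so the Benamou--Brenier equivalence and the metric-speed identity are already established, and the projection argument is verbatim with $\mathscr{H}$ replaced by $L^2(ff_*\,dv_*\,dv)$ and the discrete gradient $\bn$ replaced by the genuine differential operator $\tn$. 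The step I expect to be the main obstacle is the measurable-in-time selection of the optimal mobility that bridges the global finite-action statement and the pointwise metric-derivative identity; this is exactly where one invokes the measurable selection theorems together with the convexity and lower semicontinuity of the action functionals as in~\cite{E16,CDDW20}. The remaining ingredients---the one-sided bound defining $d_B$, the closedness of $V$ (built in by taking the closure explicitly), and the orthogonal-projection characterization---are soft functional analysis.
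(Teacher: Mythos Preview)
The paper does not supply its own proof of this lemma: it is stated as a quotation of Propositions~3.20, 3.22 and Corollary~3.21 of~\cite{E16}, together with their Landau analogues from~\cite{CDDW20}, and no \texttt{proof} environment follows. Your proposal is therefore not to be compared against anything in the present paper, but rather against the arguments in the cited references, and as a reconstruction of those it is accurate in outline: the Benamou--Brenier direction giving absolute continuity from finite action, the reparametrization-and-selection argument for the converse, and the Hilbert-space projection in $L^2(\Lambda(f)B\,d\sigma\,dv_*\,dv)$ identifying the optimal mobility are exactly the ingredients used in~\cite{E16}, and the Landau transcription via the dictionary you give is how~\cite{CDDW20} proceeds. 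You are also right to flag the measurable-in-$t$ selection of near-optimal mobilities as the only genuinely technical point; everything else is soft convex analysis. (Incidentally, you silently corrected a typo in the statement: the Boltzmann pair should lie in $CRE_T$, not $GCE_T$.)
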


We end this section with some remarks in order to demystify the gradient flow concepts in our proof of~\Cref{thm:grazcoll} without going into the details.
\begin{remark}
	We recall the uniform metric derivative integrability~\eqref{eq:bounds} in the proof of~\Cref{thm:grazcoll}
	\[
	\sup_{\epsilon>0}\int_0^T |\dot{f}^\epsilon|_\epsilon^2(t) dt < + \infty.
	\]
	Based on the abstract framework we have just reviewed, this estimate yields the following which we shall revisit in~\Cref{sec:cpct,sec:gammamd}.
	\begin{itemize}
		\item (Regularity) - Each curve we consider $t \mapsto f^\epsilon(t)$ is absolutely continuous with respect to $d_\epsilon$. Moreover, this property is uniform in $\epsilon>0$.
		\item (Compactness) - Furthermore, from~\Cref{lem:Erbarmetvel}, we can evaluate the metric derivative as the action of a unique collision rate $M^\epsilon$:
		$
		|\dot{f}^\epsilon|_\epsilon^2(t) = \mathcal{A}_B(f^\epsilon(t),M^\epsilon(t)).
		$
		This representation has two consequences - firstly, our assumptions allow us to prove compactness of $(f^\epsilon,M^\epsilon)_{\epsilon>0}$ in~\Cref{sec:cpct} to some limit $(f^0,M^0)$. Secondly, it is easier to work with the jointly convex integrands of $\mathcal{A}_B$ and $\mathcal{A}_L$ to show Step~\ref{step:gammamd} from the proof of~\Cref{thm:grazcoll};
		\[
		\liminf_{\epsilon\downarrow 0}|\dot{f}^\epsilon|_{\epsilon}^2(t) = \liminf_{\epsilon\downarrow 0}\mathcal{A}_B(f^\epsilon,M^\epsilon) \ge \mathcal{A}_L(f^0,M^0) \ge |\dot{f}|_L^2(t).
		\]
	\end{itemize}
\end{remark}

\begin{remark}
	\label{rem:mdeqdiss}
	Given a weak solution $f=f(t)=f_t$ of the Landau equation (resp. $f^\epsilon$ of the Boltzmann equation) that dissipates entropy as in~\Cref{rem:renormalH}, one immediately obtain an estimate for the metric derivative;
	\[
	|\dot{f}|_L^2(t) \le D_L(f(t)), \quad (\text{resp. }|\dot{f}^\epsilon|_\epsilon^2(t) \le D_B^\epsilon(f^\epsilon(t))).
	\]
	This is owed to the fact that one can take the admissible collision rate
		\[
		M = - ff_* \tn \log f, \quad (\text{resp. } M_B^\epsilon = - \Lambda(f^\epsilon) B^\epsilon \bn \log f^\epsilon),
		\]
		giving an upper bound for the square of the Landau metric
		\[
		d_L^2(f_0, \, f_1) \le \int_0^1 \mathcal{A}_L(f_t, M) dt = \frac{1}{2}\int_0^1 \iiRs ff_* |\tn \log f|^2 dv_* dv dt < +\infty.
		\]
		An analogous estimate holds for the Boltzmann metric. In this way, the dissipation of entropy implies the EDI, hence renormalized solutions are H-gradient flow solutions. 
\end{remark}
\section{Compactness of curves}
\label{sec:cpct}
The aim of this section is to deduce general compactness results of curves $(f^\epsilon)_{\epsilon>0}$ (not necessarily solutions in any sense) subject to the uniform moments and metric derivative bounds in~\eqref{eq:bounds} from the proof of~\Cref{thm:grazcoll}. The first half of this section establishes~\Cref{cor:limcurve} and~\Cref{prop:sqrtcpct} which are the main compactness results. The second half of this section, \Cref{sec:lanlim}, builds on these compactness results by confirming the passage of the grazing collision limit at the level of the generalized continuity equations; $CRE$ to $GCE$. Here, we have taken $T=1$ for simplicity but the results work for $CRE_T$ and $GCE_T$.

\begin{theorem}[Compactness of $f^\epsilon$]
	\label{cor:limcurve}
	Let $f^\epsilon : [0,1] \to \mathscr{P}_2$ be curves satisfying the uniform moments and metric derivative bounds~\eqref{eq:bounds}. Then there exists $f:[0,1]\to \mathscr{P}_2$ obtained by a convergent subsequence such that
	\[
	f^{\epsilon}(t) \overset{\sigma}{\rightharpoonup} f(t), \quad \forall t \in [0,1],
	\]
	and, in the case of $\gamma\in[-2,0]$, $f$ is continuous in duality against 1-Lipschitz functions (respectively, continuous in duality against bounded functions with second derivatives bounded by 1 in the case of $\gamma\in[-4,-2)$).
\end{theorem}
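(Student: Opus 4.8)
The plan is to run a refined Arzel\`a--Ascoli argument, but with respect to a \emph{fixed} weak pseudo-distance rather than the $\epsilon$-dependent Boltzmann distances $d_\epsilon$: the uniform metric-derivative bound in~\eqref{eq:bounds} cannot be fed directly into a metric-space Arzel\`a--Ascoli theorem because the metric itself varies with $\epsilon$. First I would invoke \Cref{lem:Erbarmetvel}: absolute continuity of $f^\epsilon$ with respect to $d_\epsilon$ produces a mobility $M^\epsilon$ with $(f^\epsilon,M^\epsilon)\in CRE$ and $\int_0^1\mathcal{A}_B(f^\epsilon(t),M^\epsilon(t))\,dt = \int_0^1|\dot f^\epsilon|_\epsilon^2(t)\,dt$, which is uniformly bounded by~\eqref{eq:bounds}. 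For a smooth test function $\phi$ of a single velocity variable, $t\mapsto\langle f^\epsilon_t,\phi\rangle$ is then absolutely continuous with $\frac{d}{dt}\langle f^\epsilon_t,\phi\rangle = \frac14\iiRs\iS\bn\phi\,dM^\epsilon_t$, and Cauchy--Schwarz with weight $\Lambda^\epsilon_t B^\epsilon$ yields
\[
\left|\tfrac{d}{dt}\langle f^\epsilon_t,\phi\rangle\right|\le \tfrac12\left(\iiRs\iS|\bn\phi|^2\,\Lambda^\epsilon_t\,B^\epsilon\,d\sigma\,dv_*\,dv\right)^{1/2}\mathcal{A}_B(f^\epsilon_t,M^\epsilon_t)^{1/2}.
\]

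The crux is the uniform-in-$(\epsilon,t)$ bound $\iiRs\iS|\bn\phi|^2\Lambda^\epsilon_t B^\epsilon\,d\sigma\,dv_*\,dv \lesssim \|\phi\|^2$, where $\|\phi\|=\mathrm{Lip}(\phi)$ for $\gamma\in[-2,0]$ and $\|\phi\|=\|D^2\phi\|_{L^\infty}$ for $\gamma\in[-4,-2)$. I would combine: (i) \Cref{lem:estbnv2}, which for a single-variable $\phi$ gives $|\bn\phi|\lesssim\mathrm{Lip}(\phi)|v-v_*||\sigma-k|$ in the first regime and $|\bn\phi|\lesssim\|D^2\phi\|_{L^\infty}|v-v_*|^2|\sigma-k|$ in the second, together with $|\sigma-k|^2=2(1-\cos\theta)\le\theta^2$; (ii) the bound $\Lambda^\epsilon=\Lambda(f^\epsilon)\le\frac12(ff_*+f'f_*')$ since the logarithmic mean lies below the arithmetic mean; (iii) the classical pre-post collisional change of variables $(v,v_*,\sigma)\mapsto(v',v_*',\frac{v-v_*}{|v-v_*|})$, a measure-preserving involution of $\R^6\times\Stwo$ that fixes $|v-v_*|$, $\theta$ and hence $B^\epsilon$, to turn the $f'f_*'$ contribution into one with $ff_*$; (iv) the normalization $\int_0^{\epsilon/2}\theta^2\beta^\epsilon(\theta)\,d\theta=\frac8\pi$ for every $\epsilon$ from~\ref{ass:beta}, which absorbs the angular integral uniformly; and (v) $\iiRs ff_*|v-v_*|^q\,dv_*\,dv\lesssim\int_{\R^3}f(1+|v|^2)^{q/2}\le\int_{\R^3}f(1+|v|^2)$ for $q=2+\gamma\in[0,2]$ (resp.\ $q=4+\gamma\in[0,2)$), which is uniformly bounded by~\eqref{eq:bounds}. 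Substituting back, integrating in time and using Cauchy--Schwarz gives the equi-$\tfrac12$-H\"older estimate $|\langle f^\epsilon_t-f^\epsilon_s,\phi\rangle|\lesssim\|\phi\|\,|t-s|^{1/2}$, uniformly in $\epsilon$.

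With this in hand the compactness is routine. The uniform second-moment bound makes $\{f^\epsilon_t\}_\epsilon$ tight, hence weak-$*$ relatively compact, for each fixed $t$. Choosing a countable family $\{\phi_j\}\subset C_c^\infty(\R^3)$ dense (for locally uniform convergence with uniformly controlled norms) in the relevant test class, applying Arzel\`a--Ascoli to each $t\mapsto\langle f^\epsilon_t,\phi_j\rangle$ and extracting a diagonal subsequence, one gets $\langle f^\epsilon_t,\phi_j\rangle$ converging uniformly in $t$ for all $j$; tightness upgrades this to $f^\epsilon(t)\overset{\sigma}{\rightharpoonup} f(t)$ for every $t\in[0,1]$, and lower semicontinuity of the second moment gives $f(t)\in\mathscr{P}_2$. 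Passing the equi-H\"older estimate to the limit shows $t\mapsto f(t)$ is $\tfrac12$-H\"older, hence continuous, in duality against bounded Lipschitz functions when $\gamma\in[-2,0]$ and against bounded functions with bounded second derivative when $\gamma\in[-4,-2)$, via a density argument in which the uniform second moment controls the tails.

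The main obstacle is exactly the key estimate of the second paragraph: one must show that the Boltzmann action controls a \emph{fixed} weak norm uniformly in $\epsilon$, and it is the finite angular momentum transfer normalization~\eqref{eq:betaint} that makes this uniformity possible. The dichotomy $\gamma\ge-2$ versus $\gamma<-2$ is forced by the singular kinetic weight $|v-v_*|^{2+\gamma}$: when $\gamma<-2$ one has to invest an extra derivative on $\phi$ — the second-order bound in \Cref{lem:estbnv2} — to trade the velocity singularity for a power that is integrable against the only moment available, the second.
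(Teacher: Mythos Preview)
Your proposal is correct and follows essentially the same approach as the paper. The paper packages your key estimate---the uniform-in-$\epsilon$ comparison between the Boltzmann action and a fixed weak (Lipschitz or $C^2$) dual norm---into a separate lemma (\Cref{lem:lowbdd}) phrased as $\bigl|\int\psi\,d(\mu_0-\mu_1)\bigr|\le C\,d_\epsilon(\mu_0,\mu_1)$, then combines this with the uniform $L^2$ bound on the metric derivative and invokes \cite[Proposition~3.3.1]{AGS08}; the ingredients of that lemma (the two estimates of \Cref{lem:estbnv2}, the arithmetic--logarithmic mean inequality, the pre-post collisional symmetrisation, and the finite angular momentum transfer~\eqref{eq:betaint}) are exactly the ones you list.
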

\begin{remark}
	By the well-known Kantorovich-Rubinstein duality, the continuity in duality against 1-Lipschitz functions in the case of $\gamma\in[-2,0]$ is equivalent to continuity in the 1-Wasserstein metric~\cite{V09}. This was noticed by Erbar~\cite{E19} in the case $\gamma=0$, whose proof we have generalized using the finite angular momentum transfer~\eqref{eq:betaint}.
\end{remark}
\begin{proof}[Proof of~\Cref{cor:limcurve}]
	This is an application of a general Ascoli-Arzel\`a compactness result~\cite[Proposition 3.3.1]{AGS08} together with~\Cref{lem:lowbdd}, which says that there exists $C>0$ an explicit constant depending on the finite momentum transfer~\eqref{eq:betaint}, and the uniform moments from~\eqref{eq:bounds} such that
	\[
	\left|
	\int_{\R^3}\psi(v) (f_t^\epsilon(v) - f_s^\epsilon(v)) dv
	\right|\le C d_\epsilon(f^\epsilon(t),f^\epsilon(s)), \quad \forall s, \,t \in [0,1],
	\]
	for any function $\psi$ with Lipschitz semi-norm bounded by 1 in the case $\gamma\in[-2,0]$ (respectively, second derivative bounded by 1 in the case $\gamma\in[-4,-2)$).

	By the absolute continuity of $f^\epsilon$ and the uniform $L^2$ integrability of the metric derivative, we obtain
	\[
	\sup_{\epsilon>0}\sup_{\psi} \left|
	\int_{\R^3}\psi(v) (f_t^\epsilon(v) - f_s^\epsilon(v)) dv
	\right| \le C |t - s|^\frac{1}{2}.
	\]
	This estimate with the basic weak compactness of $(f^\epsilon)_{\epsilon > 0} \subset \mathscr{P}$ by the moment bounds in~\eqref{eq:bounds} satisfies the conditions to apply a version of Ascoli-Arzela in this setting \cite[Proposition 3.3.1]{AGS08}.
\end{proof}
\begin{proposition}
	\label{prop:sqrtcpct}
	Let $(f^\epsilon)_{\epsilon>0}$ be curves satisfying the uniform moment and metric derivative bounds in~\eqref{eq:bounds}. We consider the subsequence of $f^\epsilon$ that converges to $f$ given by~\Cref{cor:limcurve}. Assume further that there exists $t_0\in[0,1]$ such that
	$$
	\sup_{\epsilon>0} D_B^\epsilon(f^\epsilon(t_0))<\infty,
	$$
	then
	\[
	\sqrt{f^\epsilon(t_0)} \to \sqrt{f(t_0)}, \text{ in } L_{loc.}^2.
	\]
\end{proposition}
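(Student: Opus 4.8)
The plan is to promote the weak convergence $f^\epsilon(t_0)\overset{\sigma}{\rightharpoonup}f(t_0)$ already supplied by~\Cref{cor:limcurve} to strong $L^2_{\mathrm{loc}}$ convergence of the square roots, using the additional hypothesis $\sup_{\epsilon>0}D_B^\epsilon(f^\epsilon(t_0))<\infty$ as the source of uniform compactness.

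First I would record that, besides the dissipation bound at $t_0$, the family $f^\epsilon(t_0)$ has uniformly bounded mass, second moment and entropy, all of which are contained in~\eqref{eq:bounds}. These are exactly the ingredients of the coercivity/regularization estimate for the non-cutoff Boltzmann entropy dissipation of Alexandre--Desvillettes--Villani--Wennberg~\cite{ADVW00} (and its refinements in~\cite{AV02,AV04}), which is recalled in~\Cref{sec:strcpct}: applied to each $f^\epsilon(t_0)$ it produces a bound on $\sqrt{f^\epsilon(t_0)}$ in a (weighted, fractional) Sobolev-type space that embeds compactly into $L^2_{\mathrm{loc}}(\R^3)$. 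The delicate point, and the step I expect to be the main obstacle, is that this bound must be \emph{uniform in $\epsilon$}: because the rescaled angular kernel $\beta^\epsilon$ concentrates its singularity at $\theta=0$, one must check that the constants in the ADVW-type estimate do not degenerate as $\epsilon\downarrow 0$. This uniformity is precisely what is established in~\Cref{sec:strcpct}, and the logarithmic-cutoff caveat of~\Cref{rem:simplification} is exactly a comment on its validity in the borderline Coulomb case.

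Granting the uniform compactness, from any subsequence I would extract a further subsequence along which $\sqrt{f^\epsilon(t_0)}\to g$ in $L^2_{\mathrm{loc}}(\R^3)$ for some $g\ge 0$ (refining once more, also pointwise a.e.). To identify $g$, on any compact set $K$ I would invoke the elementary inequality
\[
\bigl\|f^\epsilon(t_0)-g^2\bigr\|_{L^1(K)}\le \bigl\|\sqrt{f^\epsilon(t_0)}-g\bigr\|_{L^2(K)}\,\bigl\|\sqrt{f^\epsilon(t_0)}+g\bigr\|_{L^2(K)},
\]
whose right-hand side tends to $0$: the first factor does by construction, and the second stays bounded since $\|\sqrt{f^\epsilon(t_0)}\|_{L^2(\R^3)}^2=\int f^\epsilon(t_0)=1$. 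Hence $f^\epsilon(t_0)\to g^2$ in $L^1_{\mathrm{loc}}(\R^3)$, and comparing this with $f^\epsilon(t_0)\overset{\sigma}{\rightharpoonup}f(t_0)$ and uniqueness of limits forces $g^2=f(t_0)$, i.e.\ $g=\sqrt{f(t_0)}$. Since every subsequence thus admits a further subsequence converging in $L^2_{\mathrm{loc}}(\R^3)$ to the same limit $\sqrt{f(t_0)}$, the whole family converges, which is the assertion.
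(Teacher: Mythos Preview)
Your proposal is correct and follows essentially the same approach as the paper's proof: both invoke the uniform-in-$\epsilon$ ADVW-type regularity estimate from~\Cref{sec:strcpct} to gain $L^2_{\mathrm{loc}}$ compactness of $\sqrt{f^\epsilon(t_0)}$, then identify the limit using the weak convergence from~\Cref{cor:limcurve}. The only cosmetic difference is that the paper identifies $g^2=f(t_0)$ via pointwise a.e.\ convergence along a further subsequence, whereas you pass through $L^1_{\mathrm{loc}}$ convergence using the factorization $f^\epsilon-g^2=(\sqrt{f^\epsilon}-g)(\sqrt{f^\epsilon}+g)$; both arguments are equivalent here.
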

\begin{remark}
	This result is reminiscent, but weaker, than those in~\cite{L98,AV02}. There, the stronger convergence $\sqrt{f^\epsilon} \to \sqrt{f}$ in $L_{t,v}^2$ is achieved by exploiting the extra time (and space, in the case of inhomogeneity) regularity from velocity averaging methods~\cite{GLPS88} on renormalized solutions to the Boltzmann equation~\cite{DL89}.
\end{remark}
\begin{proof}[Proof of~\Cref{prop:sqrtcpct}]
	The argument is standard after recalling the main estimate of Appendix~\ref{sec:strcpct} so we shall quickly sketch the main ideas. For brevity, we fix and suppress $t_0$. Using that $f^\epsilon$ are probability densities, we immediately obtain (up to a subsequence) the weak convergence $\sqrt{f^\epsilon} \rightharpoonup g$ in $L^2$ for some $g \in L^2$. According to Appendix~\ref{sec:strcpct}, we obtain the estimate
	\[
	\sup_{\epsilon>0} \left|\left|\sqrt{f_R^\epsilon}\right|\right|_{\dot{H}^\frac{\nu}{2}} \le C_R, \quad  \forall R>1,
	\]
	where $f_R^\epsilon = f^\epsilon \chi_R$ is a smooth cut-off approximation of $f^\epsilon$ vanishing outside $B_{R+1}$ and $C_R>0$ is a constant depending on $R$ and the value of $\sup_{\epsilon>0} D_B^\epsilon(f^\epsilon)$. This upgrades the convergence so that $\sqrt{f^\epsilon} \to g$ strongly in $L_{loc.}^2$. In particular, along a further subsequence, we have $f^\epsilon \to g^2$ pointwise almost every $v\in\R^3$. By~\Cref{cor:limcurve}, this identifies $g^2 = f$ and we are done.
\end{proof}
Curves in $CRE$ and $GCE$ are pairs of measures $(f,M)$. Assuming the bounds~\eqref{eq:bounds}, we have established compactness for the first component of these curves, $f^\epsilon$. We now state and prove the compactness result for the second component, $M^\epsilon$.
\begin{proposition}[(Scaled) compactness of $M^\epsilon$]
	\label{prop:limm}
	Suppose $(f^\epsilon,M^\epsilon) \in CRE$ is a pair of curves where $(f^\epsilon)_{\epsilon>0}$ satisfies the uniform moment and metric derivative bounds~\eqref{eq:bounds}. Assume $M^\epsilon$ is the optimal collision rate given by~\Cref{lem:Erbarmetvel}.
	Then, for any $\tilde{q}\in \left[-\frac{\gamma}{2},1 - \frac{\gamma}{2}\right]$ the family $\{|v-v_*|^{\tilde{q}}\theta M^{\epsilon}\}_{\epsilon>0}$ is a bounded set of Radon measures in the weak-* topology against $C_0$ functions. In particular, choosing
	\[
	q_\gamma := \left\{
	\begin{array}{cl}
		1,     &\gamma\in[-2,0)  \\
		2,     &\gamma\in[-4,-2) 
	\end{array}
	\right., \quad 0 < \delta < \delta_\gamma := \left\{
	\begin{array}{ll}
		-\frac{\gamma}{2},     &\gamma\in[-2,0)  \\
		-\frac{\gamma}{2}-1,     &\gamma\in[-4,-2) 
	\end{array}
	\right.,
	\]
	we have that the family
	\[
	\left\{
	|v-v_*|^{q_{\gamma}} \left(
	1 + \left[ 
	|v|^2 + |v_*|^2
	\right] 
	\right)^\frac{\delta}{2} \theta M^\epsilon
	\right\}_{\epsilon>0}
	\]
	is compact in the set of Radon measures.
\end{proposition}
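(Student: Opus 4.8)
The plan is to bound the (weighted) total variation of $|v-v_*|^{\tilde q}\theta M^\epsilon$ by a Cauchy--Schwarz split against the Boltzmann action, and then to reduce the resulting ``geometric'' factor to the uniform second moment bound of~\eqref{eq:bounds}, using only three ingredients: (i) the logarithmic mean is dominated by the arithmetic mean, so that $\Lambda^\epsilon := \Lambda(f^\epsilon) \le \tfrac{1}{2} (f^\epsilon f^\epsilon_* + {f^\epsilon}'{f^\epsilon_*}')$; (ii) the pre/post collisional change of variables $(v,v_*,\sigma)\mapsto (v',v_*',k)$ is an involution of unit Jacobian fixing $|v-v_*|$, the collision angle $\theta$, and the kinetic energy $|v|^2+|v_*|^2$; and (iii) the finite angular momentum transfer~\eqref{eq:betaint}, which is exactly what makes $\theta^2\beta^\epsilon$ integrable with $\epsilon$-independent mass. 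Since $M^\epsilon$ is the optimal collision rate, \Cref{lem:Erbarmetvel} gives $\int_0^1 \mathcal{A}_B(f^\epsilon(t),M^\epsilon(t))\,dt = \int_0^1 |\dot{f}^\epsilon|_\epsilon^2(t)\,dt$, bounded uniformly in $\epsilon$ by~\eqref{eq:bounds}, and for any nonnegative $w = w(v,v_*)$ (working with densities; the general case is identical after writing $M^\epsilon = N\tau$, $f^\epsilon\otimes f^\epsilon = g\tau$ against a dominating $\tau$) Cauchy--Schwarz yields
\[
\int_0^1 \iiRs\iS w\,\theta\, d|M^\epsilon_t|\, dt \le \Bigl( 4\int_0^1 \mathcal{A}_B(f^\epsilon,M^\epsilon)\,dt \Bigr)^{1/2}\Bigl( \sup_{t\in[0,1]} \iiRs\iS w^2\,\theta^2\,\Lambda^\epsilon B^\epsilon \, d\sigma\, dv_*\, dv \Bigr)^{1/2},
\]
so everything reduces to an $\epsilon$- and $t$-uniform bound on the second factor.

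\textbf{Reduction to second moments.} By (i) and (ii), whenever $w$ depends only on $|v-v_*|$ and $|v|^2+|v_*|^2$ (both collision-invariant) the ``primed'' contribution equals the ``unprimed'' one, so $\iint\iS w^2\theta^2\Lambda^\epsilon B^\epsilon\, d\sigma\, dv_*\, dv \le \iint\iS w^2\theta^2 f^\epsilon f^\epsilon_* B^\epsilon\, d\sigma\, dv_*\, dv$. Factoring $B^\epsilon\sin\theta = |v-v_*|^\gamma \beta^\epsilon$ and using (iii) to perform the angular integral ($\int_0^{\epsilon/2}\theta^2\beta^\epsilon\,d\theta = \tfrac{8}{\pi}$, $\int_0^{2\pi}d\phi = 2\pi$) leaves $16\iint f^\epsilon f^\epsilon_* \, w^2\, |v-v_*|^\gamma\, dv_*\, dv$. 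For $w = |v-v_*|^{\tilde q}$ with $\tilde q\in[-\gamma/2,1-\gamma/2]$, so that $2\tilde q + \gamma\in[0,2]$, we have $w^2 |v-v_*|^\gamma = |v-v_*|^{2\tilde q+\gamma} \le 1 + |v-v_*|^2 \lesssim 1 + |v|^2 + |v_*|^2$, which is integrable against $f^\epsilon f^\epsilon_*$ uniformly in $\epsilon$ by~\eqref{eq:bounds}. This proves the first assertion.

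\textbf{Weighted family and compactness.} For the last statement I repeat the reduction with $w = |v-v_*|^{q_\gamma}(1+|v|^2+|v_*|^2)^{\delta/2}$; collisional invariance of $|v|^2+|v_*|^2$ keeps (ii) applicable, and the geometric factor becomes $16\iint f^\epsilon f^\epsilon_*\, |v-v_*|^{2q_\gamma+\gamma}(1+|v|^2+|v_*|^2)^{\delta}\, dv_*\, dv$. A short case split according to whether $R := \max(|v|,|v_*|)$ is small or large shows that exactly when $2q_\gamma+\gamma\ge 0$ and $2q_\gamma+\gamma+2\delta\le 2$ — i.e.\ the stated choices of $q_\gamma$ together with $\delta<\delta_\gamma$ — one has $|v-v_*|^{2q_\gamma+\gamma}(1+|v|^2+|v_*|^2)^{\delta}\lesssim 1+|v|^2+|v_*|^2$, so this integral is again uniformly bounded. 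Hence $\{|v-v_*|^{q_\gamma}(1+|v|^2+|v_*|^2)^{\delta/2}\theta M^\epsilon\}_{\epsilon>0}$ is uniformly bounded in total variation and therefore weak-$\ast$ sequentially precompact on the locally compact space $[0,1]\times\R^6\times\Stwo$. Because $\delta<\delta_\gamma$ is strict, I can moreover fix $\delta'>0$ with $\delta+\delta'<\delta_\gamma$ and rerun the estimate with the heavier weight $w\,(1+|v|^2+|v_*|^2)^{\delta'/2}$; the resulting uniform bound gives, for every $R>0$,
\[
\int_{\{|v|+|v_*|>R\}} |v-v_*|^{q_\gamma}(1+|v|^2+|v_*|^2)^{\delta/2}\,\theta\, d|M^\epsilon_t| \le \frac{C}{(1+R^2)^{\delta'/2}},
\]
uniformly in $\epsilon$ and $t$, which tends to $0$ as $R\to\infty$; so the family is tight and, together with the total variation bound, genuinely compact in the set of Radon measures.

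\textbf{Main obstacle.} The crux is the reduction step: although $M^\epsilon$ only carries a bound on the \emph{quadratic} action $\mathcal{A}_B$, Cauchy--Schwarz converts this into a bound against $\Lambda^\epsilon B^\epsilon$, and the only way to keep the cross term controlled by the \emph{mere} second moments in~\eqref{eq:bounds} is to use simultaneously the logarithmic-mean inequality \emph{and} the full strength of the finite angular momentum transfer~\eqref{eq:betaint} — it is the latter that absorbs the $\theta^{-1-\nu}$ singularity of $\beta$ surviving the $\epsilon^{-3}$ rescaling and produces an $\epsilon$-uniform constant. The bookkeeping forcing every relevant velocity exponent into $[0,2]$ (and strictly below $2$, which is what upgrades boundedness to tightness) is precisely what pins down the admissible ranges of $\tilde q$, $q_\gamma$, and $\delta_\gamma$.
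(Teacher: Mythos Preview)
Your proof is correct and follows essentially the same approach as the paper: Cauchy--Schwarz against the action, the logarithmic--arithmetic mean bound (the paper's \Cref{cor:lambdaf}), the pre/post-collisional symmetrization, and the finite angular momentum transfer to handle the $\theta$-integral. The paper pairs against a generic $\Psi\in C_0$ and obtains $\Psi^2+{\Psi'}^2$ after the change of variables, whereas you exploit that the specific weights $|v-v_*|^{\tilde q}$ and $(1+|v|^2+|v_*|^2)^{\delta/2}$ are collision invariants, which is slightly cleaner; and you spell out the tightness argument via the auxiliary $\delta'$, which the paper compresses into the single observation that $q_\gamma+\delta\in[-\gamma/2,1-\gamma/2)$ is a strict inclusion.
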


\begin{proof}
	We will only show the uniform bound. The compactness statement uses the same argument because the choices of $q$ and $\delta$ depending on $\gamma$ ensure $q + \delta \in \left[-\frac{\gamma}{2}, 1 - \frac{\gamma}{2} \right)$. Fix $\Psi\in C_0([0,1] \times\R^6 \times \Stwo)$ non-negative and $\tilde{q}\in \left[-\frac{\gamma}{2}, 1 - \frac{\gamma}{2} \right]$; we use~\Cref{cor:lambdaf} to estimate
	\begin{align*}
		\int_0^1 \iiRs \iS \Psi |v-v_*|^{\tilde{q}}|\theta| |M^{\epsilon}|  &=\int_0^1 \iiRs \iS \frac{|M^{\epsilon}|}{\sqrt{\Lambda (f^{\epsilon})B^{\epsilon}}} |v-v_*|^{\tilde{q}}|\theta| \Psi \sqrt{\Lambda (f^{\epsilon})B^{\epsilon}}  \\
		& \le\left(
		\int_0^1 \iiRs \iS \frac{|M^{\epsilon}|^2}{\Lambda(f^{\epsilon})B^{\epsilon}}
		\right)^\frac{1}{2}\left(
		\int_0^1 \iiRs \iS|v-v_*|^{2\tilde{q}}|\theta|^2 \Psi^2 \Lambda(f^{\epsilon})B^{\epsilon}
		\right)^\frac{1}{2}.
	\end{align*}
	Here, we have multiplied and divided by $\sqrt{\Lambda^\epsilon B^\epsilon}$ and then applied Cauchy-Schwarz. This reveals precisely the $\epsilon$-action in the first term, which by the metric derivative bound in~\eqref{eq:bounds} and~\Cref{lem:Erbarmetvel}, is bounded. Focusing on the second term, we use~\Cref{cor:lambdaf} to estimate
	\begin{align*}
		&\int_0^1 \iiRs \iS |v-v_*|^{2\tilde{q}}|\theta|^2 \Psi^2 \left[ 
		\frac{{f^{\epsilon}}'{f_*^{\epsilon}}' + f^{\epsilon}f_*^{\epsilon}}{2}
		\right] B^\epsilon d\sigma dv_* dv dt.
	\end{align*}
	By symmetry, we can pass the post-collision velocity evaluations of ${f^\epsilon}'{f^\epsilon}_*'$ onto $\Psi$. We develop $B^\epsilon \sin \theta = |v-v_*|^\gamma\beta^\epsilon$ and continue the estimate
	\begin{align*}
		&\int_0^1 \iiRs \iS \Psi |v-v_*|^{\tilde{q}}|\theta| |M^\epsilon| d\sigma dv_* dv dt \\
		&\qquad\le C \left(
		\int_0^1 \iiRs \itheta[\epsilon/2]{\theta} \iSk|v-v_*|^{2\tilde{q}+\gamma} |\theta|^2 (\Psi^2 + {\Psi'}^2) f^{\epsilon}f_*^{\epsilon} \frac{\pi^3}{\epsilon^3}\beta \left(
		\frac{\pi\theta}{\epsilon}
		\right)dp d\theta dv_* dv dt
		\right)^\frac{1}{2}  \\
		&\qquad= \sqrt{2\pi}C\left(
		\int_0^1 \iiRs \int_0^{\pi/2} \pi^2|v-v_*|^{2\tilde{q} + \gamma}\chi^2(\Psi^2 + {\Psi'}^2) f^{\epsilon}f_*^{\epsilon} \beta(\chi) d\chi dv_* dv dt
		\right)^\frac{1}{2}
	\end{align*}
	The final expression is uniformly bounded in $\epsilon$ by the assumptions (notice $2\tilde{q} + \gamma \in [0,2]$) and finite angular momentum transfer~\eqref{eq:betaint}.
\end{proof}
	\begin{corollary}
		Consider the setting of~\Cref{prop:limm} and denote $\mathscr{M}\in \mathcal{M}((0,1)\times \R^6\times \Stwo)$ a limit of the family $\{|v-v_*|^{\tilde{q}}\theta M^\epsilon\}_\epsilon$. Then, $\mathscr{M}$ can be expressed as a family in time of signed measures on $\R^6\times \Stwo$.
	\end{corollary}
	\begin{proof}
		We repeat the proof of~\Cref{prop:limm} but fix a test function $\Psi(v, \, v_*, \, \sigma, \, t)\in C_0((0,1)\times \R^6\times \Stwo)$ now so that its time dependence is an indicator function, i.e.
		\[
		\Psi(v, \, v_*, \, \sigma, \, t) = \psi(v, \, v_*, \, \sigma) \chi_{[a,b]}(t), \quad \psi \in C_0(\R^6\times \Stwo).
		\]
		We continue from the last line of the previous proof to obtain
		\begin{align*}
			&\quad \int_a^b \iiRs \iS |\psi| |v-v_*|^{\tilde{q}}|\theta||M_t^\epsilon|d\sigma dv_* dv dt \\
			&\qquad \le \sqrt{2\pi} C \left(
			\int_a^b \iiRs \int_0^{\pi/2} \pi^2|v-v_*|^{2\tilde{q} + \gamma}\chi^2(\psi^2 + {\psi'}^2) f^{\epsilon}f_*^{\epsilon} \beta(\chi) d\chi dv_* dv dt
			\right)^\frac{1}{2}.
		\end{align*}
		The finite angular momentum transfer~\eqref{eq:betaint} is independent of time, moreover the zeroeth to second moments of $f^\epsilon$ are bounded uniformly in $\epsilon$ and $t$ from~\eqref{eq:bounds} so absorbing these terms into a constant leaves
		\[
		\int_a^b \iiRs \iS |\psi| |v-v_*|^{\tilde{q}}|\theta||M_t^\epsilon|d\sigma dv_* dv dt \le C |b-a|^\frac{1}{2}.
		\]
		This estimate holds in the limit $\epsilon\downarrow 0$ as well, so the measure $\mathscr{M}$ can also be disintegrated with respect to Lebesgue measure on $t\in[0,1].$
	\end{proof}
	From now on, we take for granted that the limits in~\Cref{prop:limm} are also families in time of signed measures on $\R^6\times \Stwo$.
\begin{lemma}[Comparison of certain topologies against the Boltzmann metric]
	\label{lem:lowbdd}
	Let $\mu_0,\mu_1\in \mathscr{P}_2(\Rthree)$ be probability measures that are absolutely continuous with respect to Lebesgue. There exists a constant $C>0$ depending only on the finite angular momentum transfer~\eqref{eq:EDE} and the second moment of $\mu_0$ such that the following holds:
	\begin{enumerate}
		\item In the case $\gamma\in [-2,0]$, we have
		\begin{align*}
			&\left|
			\int_{\R^3}\psi(v) d(\mu_0 - \mu_1)(v)
			\right|\le C d_B^\epsilon(\mu_0,\mu_1)
		\end{align*}
		for any function $\psi$ with Lipschitz semi-norm bounded by 1.
		\item In the case $\gamma \in [-4,-2)$, we have
		\begin{align*}
			&\left|
			\int_{\R^3}\psi(v) d(\mu_0 - \mu_1)(v)
			\right| \le C d_B^\epsilon(\mu_0,\mu_1)
		\end{align*}
		for any function $\psi$ with second derivative bounded by 1.
	\end{enumerate}
\end{lemma}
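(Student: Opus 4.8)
The plan is to run the standard Benamou--Brenier-type duality argument along curves in $CRE_T$. If $d_B^\epsilon(\mu_0,\mu_1)=+\infty$ there is nothing to prove, so fix an arbitrary $(\mu,M)\in CRE_T$ with $\mu_0,\mu_1$ as endpoints and finite total action $\int_0^T\mathcal A_B(\mu_t,M_t)\,dt<\infty$. Since $\mu_0,\mu_1\in\mathscr P_2$, it suffices to establish the bound for $\psi\in C_c^\infty(\Rthree)$ and then pass to general $\psi$ with bounded Lipschitz (resp.\ Hessian) norm by a cut-off argument using the uniform second moments. Testing the collision rate equation against $\psi$ gives $\frac{d}{dt}\int_{\Rthree}\psi\,d\mu_t=-\tfrac14\iiRs\iS\bn\psi\,dM_t$, hence
\[
\Bigl|\int_{\Rthree}\psi\,d(\mu_0-\mu_1)\Bigr|\le\frac14\int_0^T\iiRs\iS|\bn\psi|\,d|M_t|\,dt.
\]

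The heart of the matter is to feed the Boltzmann action into the right-hand side. Writing $\tau=\mu_t\otimes\mu_t+|M_t|$ and using the $\tau$-densities as in the definition of $\mathcal A_B$, Cauchy--Schwarz against $\sqrt{\Lambda^\epsilon B^\epsilon}$ yields $\iiRs\iS|\bn\psi|\,d|M_t|\le 2\sqrt{\mathcal A_B(\mu_t,M_t)}\,\bigl(\iiRs\iS|\bn\psi|^2\Lambda^\epsilon B^\epsilon\bigr)^{1/2}$. To bound the second factor I would combine three ingredients. First, the single-variable analogue of~\Cref{lem:estbnv2}: since $v'-v=\tfrac12|v-v_*|(\sigma-k)$ and $v_*'-v_*=-\tfrac12|v-v_*|(\sigma-k)$, a first-order Taylor expansion gives $|\bn\psi|\le Lip(\psi)\,|v-v_*|\,|\sigma-k|$, while a second-order expansion gives $|\bn\psi|\lesssim\|D^2\psi\|_{L^\infty}|v-v_*|^2|\sigma-k|$; I use the former when $\gamma\in[-2,0]$ and the latter when $\gamma\in[-4,-2)$. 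Second, the logarithmic-mean bound $\Lambda^\epsilon\le\tfrac12(ff_*+f'f_*')$ together with the pre-post collision change of variables (under which $|\bn\psi|^2$ and $B^\epsilon$ are invariant) reduces the integral to $\iiRs\iS|\bn\psi|^2 ff_* B^\epsilon$. Third, passing to spherical coordinates so that $B^\epsilon\,d\sigma=|v-v_*|^\gamma\beta^\epsilon(\theta)\,d\theta\,d\phi$, using $|\sigma-k|^2\le\theta^2$, and invoking the scaling-invariant identity $\int_0^{\epsilon/2}\theta^2\beta^\epsilon(\theta)\,d\theta=\int_0^{\pi/2}\chi^2\beta(\chi)\,d\chi=\tfrac8\pi$ from~\eqref{eq:betaint}, one is left with a constant multiple of $\iiRs ff_*|v-v_*|^{2+\gamma}$ in the first case and of $\iiRs ff_*|v-v_*|^{4+\gamma}$ in the second. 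In both regimes the surviving exponent lies in $[0,2]$, so $|v-v_*|^{p}\le 1+2|v|^2+2|v_*|^2$, and since $f_t,f_{*,t}$ are probability densities whose kinetic energy is non-increasing along $CRE$ curves, this is bounded by $C\bigl(1+\int_{\Rthree}|v|^2\,d\mu_0\bigr)$ uniformly in $t$ and $\epsilon$, with $C$ depending only on the normalization~\eqref{eq:betaint}.

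Collecting the estimates and applying Cauchy--Schwarz in time, with $E_0:=\int_{\Rthree}|v|^2\,d\mu_0$,
\[
\Bigl|\int_{\Rthree}\psi\,d(\mu_0-\mu_1)\Bigr|\le\tfrac12\sqrt{C(1+E_0)}\int_0^T\sqrt{\mathcal A_B(\mu_t,M_t)}\,dt\le\tfrac12\sqrt{C(1+E_0)}\Bigl(T\!\int_0^T\mathcal A_B(\mu_t,M_t)\,dt\Bigr)^{1/2},
\]
and taking the infimum over all admissible curves gives $\bigl|\int\psi\,d(\mu_0-\mu_1)\bigr|\le\tfrac12\sqrt{C(1+E_0)}\,d_B^\epsilon(\mu_0,\mu_1)$, which is the claim. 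The one genuinely delicate point is the requirement that the surviving power of $|v-v_*|$ be non-negative, so that it is controlled by the second moments alone with no extra integrability on $f$: this is exactly what forces the two separate cases, i.e.\ the switch from a first-order ($Lip$) to a second-order ($D^2$) estimate on $\bn\psi$ as $\gamma$ crosses $-2$, and hence the two different test-function classes in the statement. A minor technical nuisance is that $M_t$ need not be absolutely continuous, which is why the Cauchy--Schwarz and mean-inequality steps are carried out against the dominating measure $\tau$; this is cosmetic and affects none of the constants.
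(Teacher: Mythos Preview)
Your proof is correct and follows essentially the same route as the paper's: test the collision rate equation with $\psi$, apply Cauchy--Schwarz against $\sqrt{\Lambda^\epsilon B^\epsilon}$ to reveal the action, bound $\Lambda^\epsilon$ by the arithmetic mean and symmetrize, then control $|\bn\psi|^2$ via the first- or second-order Taylor estimate of \Cref{lem:estbnv2} according to whether $\gamma\in[-2,0]$ or $\gamma\in[-4,-2)$, and finish with the angular momentum transfer bound and the second moments. Your write-up is in fact slightly more careful than the paper's in two places: you explicitly note the dominating measure $\tau$ to cover the case where $M_t$ is not absolutely continuous, and you spell out the final Cauchy--Schwarz in time before infimizing over curves, whereas the paper works with $T=1$ and the ``optimal mobility'' from the start.
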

\begin{proof}
	We will only show the proof of the first estimate in the case $\gamma \in [-2,0]$. The proof of the second estimate differs only by estimating $\bn \psi$ using the second estimate of~\Cref{lem:estbnv2} instead of the first. Without loss of generality we can assume that $d_B^\epsilon(\mu_0,\mu_1)<\infty$. We take $M$ the optimal collision rate in the sense of~\Cref{lem:Erbarmetvel}. Fix Lipschitz $\psi: \Rthree \to \R$ with $\text{Lip}(\psi)\le 1$ and we recall the first estimate of~\Cref{lem:estbnv2}
	\begin{align*}
		|\bn \psi| &\le \left|\psi\left(v+ \frac{1}{2}|v-v_*|(\sigma-k)\right) - \psi(v)\right| + \left|\psi\left(v_* - \frac{1}{2}|v-v_*|(\sigma - k)\right) - \psi(v_*)\right| \\
		&\le |v-v_*||\sigma - k|.
	\end{align*}
	Using $\psi$ as a test function in the collision rate equation (which we justify at the end) connecting $\mu_0$ to $\mu_1$ we have
	\begin{align*}
		\left|
		\iR \psi d\mu_0 \right.&\left.- \iR \psi d\mu_1
		\right| = \frac{1}{4}
		\left|
		\int_0^1 \iiRs \iS \bn \psi M d\sigma dv_* dv dt
		\right| \le \frac{1}{4}\int_0^1 \iiRs \iS |v-v_*||\sigma - k| |M| d\sigma dv_* dv dt \\
		&\le \frac{1}{4}\left(
		\int_0^1\iiRs \iS \frac{|M|^2}{\Lambda(f)B^\epsilon}d\sigma dv_* dv dt
		\right)^\frac{1}{2} \left(
		\int_0^1 \iiRs \iS |v-v_*|^2|\sigma - k|^2 \Lambda(f) B^\epsilon d\sigma dv_* dv dt
		\right)^\frac{1}{2}.
	\end{align*}
	At this point, we recognize the first term as the time integrated $\epsilon$-Boltzmann action. In the second term, we can apply~\Cref{cor:lambdaf}. Since $M$ is optimal we can estimate the previous expression by
	\begin{align*}
		\frac{1}{4}d_B^\epsilon(\mu_0,\mu_1)\left(
		\int_0^1 \iiRs \iS |v-v_*|^2|\sigma - k|^2 \left(
		\frac{f^{'}f_*^{'} + f f_*}{2}
		\right)B^\epsilon d\sigma dv_* dv dt
		\right)^\frac{1}{2}.
	\end{align*}
	By~\Cref{fig:pic1} or directly from the definitions, we have
	$
	|\sigma' - k'| = |k - \sigma|.
	$
	Hence, the arithmetic mean is just $ff_*$ upon symmetrisation ($B^\epsilon$ is invariant when $(v,v_*)\leftrightarrow (v',v_*')$). This leads to
	\begin{align*}
		&\quad \left|\iR \psi d\mu_0 - \iR \psi d\mu_1
		\right|\le \frac{1}{4}d_B^\epsilon(\mu_0,\mu_1) \left(
		\int_0^1 \iiRs \iS |v-v_*|^2|\sigma - k|^2 B^\epsilon f f_* d\sigma dv_* dv dt
		\right)^\frac{1}{2}.
	\end{align*}
	Now, we change representation from $\sigma$-representation to $(\theta,\phi)$-representation, see \Cref{sec:sigmarep}. We recall from~\eqref{eq:equivang} that
	$
	|\sigma - k|^2 = 2(1-\cos\theta).
	$
	Substituting this leads to the further estimate
	\begin{align*}
		\left|\iR \psi d\mu_0 \right.&\left. - \iR \psi d\mu_1
		\right|\le \frac{\sqrt{2}}{4}d_B^\epsilon(\mu_0,\mu_1) \left(
		\int_0^1 \!\!\iiRs \!\int_{\theta=0}^{\epsilon/2}\int_{\phi = 0}^{2\pi}\!\!(1-\cos\theta)|v-v_*|^{2+\gamma}\beta^\epsilon(\theta) f f_* d\theta d\phi dv_* dv dt 
		\right)^\frac{1}{2} \\
		&= \frac{\sqrt{2}}{4}d_B^\epsilon(\mu_0,\mu_1)\left(
		\int_0^1 \!\!\iiRs \!\int_{\theta=0}^{\epsilon/2}\int_{\phi = 0}^{2\pi}\!\!(1-\cos\theta)|v-v_*|^{2+\gamma} \frac{\pi^3}{\epsilon^3}\beta\left(\frac{\pi\theta}{\epsilon}\right) f f_* d\theta d\phi dv_* dv dt 
		\right)^\frac{1}{2}. 
	\end{align*}
	We perform the change of variables $\chi = \pi\theta/\epsilon$ and directly compute the $\phi$ integral to get
	\begin{align*}
		\left|\iR \psi d\mu_0 - \iR \psi d\mu_1
		\right| &\le \frac{\sqrt{\pi}}{2}d_B^\epsilon(\mu_0,\mu_1) \left(
		\int_0^1 \iiRs \int_{\chi=0}^{\pi/2}(1 - \cos \frac{\epsilon\chi}{\pi})|v-v_*|^{2+\gamma} \frac{\pi^2}{\epsilon^2}\beta(\chi) f f_* d\chi dv_* dv dt 
		\right)^\frac{1}{2}.
	\end{align*}
	We eliminate the factor of $1/\epsilon^2$ by the inequality $1 - \cos x \le \frac{1}{2}x^2$ in~\eqref{eq:boundcos} when $x\in [0,\pi]$ to give
	\begin{align*}
		\left|\iR \psi d\mu_0 - \iR \psi d\mu_1
		\right|\le \frac{\sqrt{2\pi}}{4}d_B^\epsilon(\mu_0,\mu_1) \left(
		\int_0^1 \iiRs \int_{\chi = 0}^{\pi/2} \chi^2 |v-v_*|^{2+\gamma} \beta(\chi) f f_* d\chi dv_* dv dt
		\right)^\frac{1}{2}.
	\end{align*}
	The integral decouples and the proof is complete recalling the finite angular momentum transfer~\eqref{eq:betaint}.
	
	We now address the use of time-independent Lipschitz-bounded functions as test functions in the collision rate equation for $\gamma\in[-2,0]$. Fix $\phi \in C_c^\infty((0,1)\times \R^3)$ and repeat the previous estimates. In particular by the first estimate of~\Cref{lem:estbnv2}, notice that the drift term can be estimated as follows
		\begin{align*}
			&\quad \left|
			\int_0^1\iiRs \iS \bn \phi M
			\right| \le \int_0^1\text{Lip}_v(\phi)\iiRs \iS |v-v_*||\sigma - k| |M| \\
			&\le \frac{1}{2}\int_0^1 \text{Lip}_v(\phi) \left\{
			\iiRs \iS \frac{|M|^2}{\Lambda(f)B^\epsilon} + \iiRs \iS |v-v_*|^2 |\sigma - k|^2 \Lambda(f) B^\epsilon
			\right\}.
		\end{align*}
		The finite metric derivative and second moment bounds from~\eqref{eq:bounds} and the finite angular momentum~\eqref{eq:betaint} give estimates on both the integrals in the curly brackets. This leaves only the dependence on the Lipschitz semi-norm of the test function so by density, one can take test functions with bounded Lipschitz dependence in $v\in \R^3$. The argument is similar for $\gamma\in[-4,-2)$ by using instead the second estimate of~\Cref{lem:estbnv2} to enlarge the class of test functions to those with bounded second derivative. The time independence can be treated by considering test functions $\phi_k(t,v) = \eta_k(t)\zeta(v)$ for $\eta_k \in C_c^\infty((0,1))$ and $\zeta\in C_c^\infty(\R^3)$ where $\eta_k$ is a smooth approximation of the indicator function on (0,1) for $k\in \mathbb{N}$. A detailed argument for this point can be found, for instance in~\cite[Lemma 8.1.2]{AGS08}.
\end{proof}
\subsection{Grazing collision limit of the continuity equations}
\label{sec:lanlim}
Now that we understand compactness for $(f^\epsilon, M^\epsilon)$, we need to verify that the limit points actually satisfy the $GCE$. Given $\psi\in C_c^\infty((0,T)\times\R^3)$ a test function, we recall the generalized continuity equations for the $\epsilon$-Boltzmann ($(f^\epsilon, M^\epsilon) \in CRE$) and Landau ($(f,M) \in GCE$) equations
\begin{equation}
	\label{eq:boltzcont}
	\int_0^T \iR \partial_t \psi  f^\epsilon dv dt + \frac{1}{4}\int_0^T \iiRs \iS \bn \psi M^\epsilon d\sigma dv_* dv dt = 0,
\end{equation}
and
\begin{equation}
	\label{eq:lancont}
	\int_0^T \iR \partial_t\psi f dv dt + \frac{1}{2} \int_0^T \iiRs \tn \psi \cdot M dv_* dv dt = 0.
\end{equation}
We can directly compare the first terms from the weak convergence $f^\epsilon \to f$ from~\Cref{cor:limcurve}. It remains to understand precisely the convergence in the transport term. Recalling~\Cref{prop:limm}, we know that $\left(|v-v_*|^q \left(1 + \left[|v|^2 + |v_*|^2 \right]^\frac{\delta}{2} \right)\theta M^\epsilon\right)_{\epsilon>0}$ is compact for $q, \delta$ satisfying
\[
q = \left\{
\begin{array}{cl}
	1,     &\gamma\in[-2,0)  \\
	2,     &\gamma\in[-4,-2) 
\end{array}
\right., \quad 0 < \delta < \left\{
\begin{array}{ll}
	-\frac{\gamma}{2}     &\gamma\in[-2,0)  \\
	-\frac{\gamma}{2} - 1,     &\gamma\in[-4,-2) 
\end{array}
\right..
\]
We define a `lift' mapping whose use will soon be clear
\[
L_{q,\delta} : M \in \mathcal{M}_B \mapsto \frac{|v-v_*|^{-\frac{\gamma}{2}-q}}{4\left(1 + \left[|v|^2 + |v_*|^2 \right]^\frac{\delta}{2}\right)} \iS M \, p \, d\sigma \in \mathcal{M}_L.
\]
Recall $p \in \Sk$ as defined in~\Cref{sec:spherical} in the integral over $\Stwo$. The motivation for this comes from looking at the formal grazing collision limit of~\eqref{eq:boltzcont}. Along the subsequence of convergence in~\Cref{prop:limm}, let us write
\[
|v-v_*|^{q}\left(1 + \left[|v|^2 + |v_*|^2 \right]^\frac{\delta}{2} \right)\theta M^\epsilon \overset{\epsilon \downarrow 0}{\to} M_{q,\delta}^0.
\]
This suggests to multiply and divide by $\theta$ within the integral in~\eqref{eq:boltzcont}. By~\Cref{lem:estbnv2}, we obtain
\[
\frac{1}{\theta}\bn \psi \overset{\epsilon\downarrow 0}{\to} \frac{1}{2}|v-v_*| (\nabla \psi - \nabla_* \psi_*)\cdot p = \frac{1}{2}|v-v_*|^{-\frac{\gamma}{2}}p\cdot \tn \psi.
\]
Multiplying and dividing by $|v-v_*|^{q}\left(1 + \left[|v|^2 + |v_*|^2 \right]^\frac{\delta}{2} \right)\theta$ in the transport term of~\eqref{eq:boltzcont} and omitting the time integral, we have
\begin{align*}
	&\frac{1}{4}\iiRs \iS \bn \psi M^\epsilon = \frac{1}{4} \iiRs \iS \frac{|v-v_*|^{q}\left(1 + \left[|v|^2 + |v_*|^2 \right]^\frac{\delta}{2} \right)\theta M^\epsilon}{\theta|v-v_*|^{q}\left(1 + \left[|v|^2 + |v_*|^2 \right]^\frac{\delta}{2} \right)} \bn \psi  d\sigma dv_* dv.
\end{align*}
In order to apply weak-strong convergence, we need to ensure that $|v-v_*|^{-q}\left(1 + \left[|v|^2 + |v_*|^2 \right]^\frac{\delta}{2} \right)^{-1}\frac{\bn \psi}{\theta}$ decays when $v, \, v_* \to \infty$ uniformly in $\epsilon>0$. Recall the meaning of weak-strong convergence; if $\psi_n$ converges strongly to $\psi$ in $C_0(\R^6\times \Stwo)$ and the sequence of signed Radon measures $M_n$ converges weakly-* to $M$ in $\mathcal{M}(\R^6\times \Stwo)$, then $\iiRs \iS \psi_n dM_n \to \iiRs \iS \psi dM$. By our choice of $q$ depending on $\gamma\in[-4,0)$ and~\Cref{lem:estbnv2}, we can estimate
\[
|v-v_*|^{-q}\frac{\bn \psi}{\theta} \lesssim \left\{
\begin{array}{cl}
	\text{Lip}(\psi)     &q=1, \, \gamma\in[-2,0)  \\
	\|D^2\psi\|_{L^\infty}     &q = 2, \, \gamma\in [-4,-2) 
\end{array}
\right..
\]
By the convergence result in~\Cref{lem:epszerobnpsiv2}, we obtain
\[
|v-v_*|^{-q}\frac{\bn \psi}{\theta} \to \frac{1}{2}|v-v_*|^{1-q}p\cdot (\nabla - \nabla_*)\psi = \frac{1}{2}|v-v_*|^{-q-\frac{\gamma}{2}}p\cdot \tn \psi.
\]
Therefore, we can pass to the limit $\epsilon\downarrow 0$,
\begin{align*}
	\frac{1}{4}\iiRs \iS \bn \psi M^\epsilon &\overset{\epsilon\downarrow 0}{\to} \frac{1}{8}\iiRs \frac{|v-v_*|^{-q-\frac{\gamma}{2}}}{\left(1 + \left[|v|^2 + |v_*|^2 \right]^\frac{\delta}{2} \right)}\tn \psi \cdot \iS M_{q,\delta}^0 p d\sigma dv_* dv \\
	&= \frac{1}{2}\iiRs \tn \psi \cdot L_{q,\delta}(M_{q,\delta}^0)dv_* dv.
\end{align*}
This shows that $(f,L_{q,\delta}(M_{q,\delta}^0))$ is an admissible pair for the grazing continuity equation coming from the Boltzmann sequence $(f^\epsilon, M^\epsilon)$.

\begin{remark}
	\label{rem:totalcrosssection}
	We describe more precisely the various changes necessary to generalize our assumptions on the collision kernel from the discussion in~\Cref{rem:uppbdd}. Repeating the proof of~\Cref{prop:limm}, we can show that the family
	\[
	\left\{
	\frac{\theta M^\epsilon}{\sqrt{T^\epsilon(v,v_*)}}
	\right\}_{\epsilon>0}
	\]
	has uniformly bounded moments up to first order. To prove a similar result as~\Cref{lem:lowbdd}, one has to utilize both the Lipschitz (for large $|v-v_*| \gg 1$) and Hessian (for local $|v-v_*|\le 1$) estimates in~\Cref{lem:estbnv2} to obtain
	\[
	\left|
	\int_{\R^3}\psi(v) d(\mu_0 - \mu_1)(v)
	\right|\lesssim d_B^\epsilon(\mu_0,\mu_1),
	\]
	for test functions $\psi$ with Lipschitz semi-norm and second derivative bounded by 1.
	
	Finally, we discuss the grazing collision limit at the level of the generalized continuity equations. We need good estimates for the pairing of $\bn \psi$ against $M^\epsilon$ to show the grazing collision limit: 
	\[
	\int_0^1 \iiRs \iS \bn \psi M^\epsilon =\int_0^1 \iiRs \iS \frac{\bn \psi}{\theta} \sqrt{T^\epsilon} \left[\frac{\theta M^\epsilon}{\sqrt{T^\epsilon}} \right]\to \int_0^1\iiRs \tn \psi(v,v_*)\cdot \iS M^0p\;d\sigma dv_*dv,
	\]
	where
	\[
	\tn \psi(v,v_*) = |v-v_*|\sqrt{T(|v-v_*|)}\Pi[v-v_*](\nabla \psi - \nabla_* \psi_*).
	\]
	The convergence above uses again the Lipschitz and Hessian estimates in~\Cref{lem:estbnv2}, the uniform bound for $T^\epsilon$ \eqref{Tepsbound}, and the convergence of  $T^\epsilon$ to $T$ as in \eqref{Tepsconvergence}.
\end{remark}

\section{Lower semicontinuous convergence of the dissipations in the grazing collision limit}
\label{sec:gammadiss}
Throughout this section we consider $(f^\epsilon)_{\epsilon>0}$ a sequence of probability densities with uniformly bounded second moment and entropy
\begin{equation}\label{ass:gammadiss}
	\sup_{\epsilon>0}\int_{\R^3} |v|^2f^\epsilon(v)dv < + \infty, \quad \sup_{\epsilon>0}\int_{\R^3} f^\epsilon \log f^\epsilon dv < + \infty.    
\end{equation}
such that $f^\epsilon \overset{\sigma}{\rightharpoonup} f$ for some probability density $f$. We wish to show the lower semicontinuous convergence of the dissipation.
\begin{proposition}
	\label{prop:gammadiss}
	Assume $f^\epsilon \overset{\sigma}{\rightharpoonup} f$ with uniform second moment and entropy bounds \eqref{ass:gammadiss}. Then we have
	\[
	\liminf_{\epsilon\downarrow 0}D_B^\epsilon(f^\epsilon) \ge D_L(f).
	\]
\end{proposition}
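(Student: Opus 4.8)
The plan is to pass to the grazing limit in the \emph{affine representation} of the two dissipations, one test function at a time, following the remark after \Cref{thm:grazcoll}. First I would dispose of the trivial case: passing to a subsequence realizing $\liminf_{\epsilon\downarrow 0} D_B^\epsilon(f^\epsilon)$, we may assume this liminf is finite, so that $\sup_{\epsilon}D_B^\epsilon(f^\epsilon)<\infty$ along it. Together with the uniform entropy and kinetic energy bounds \eqref{ass:gammadiss}, the argument behind \Cref{prop:sqrtcpct} (i.e. the $\dot{H}^\frac{\nu}{2}$ estimate of Appendix~\ref{sec:strcpct}) upgrades $f^\epsilon\overset{\sigma}{\rightharpoonup}f$ to $\sqrt{f^\epsilon}\to\sqrt{f}$ in $L^2_{loc}(\R^3)$, hence $f^\epsilon f^\epsilon_*\to ff_*$ and $\sqrt{f^\epsilon f^\epsilon_*}\to\sqrt{ff_*}$ in $L^1_{loc}(\R^6)$.

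Next I would establish an affine lower bound. Using the elementary inequality $(a-b)(\log a-\log b)\ge 4(\sqrt a-\sqrt b)^2$ with $a={f^\epsilon}'{f^\epsilon}'_*$, $b=f^\epsilon f^\epsilon_*$, and the identity $\sqrt a-\sqrt b=\tfrac12\bn\sqrt{f^\epsilon f^\epsilon_*}$ for the two-variable discrete gradient (valid since $\sqrt{ff_*}$ is symmetric), one gets $D_B^\epsilon(f^\epsilon)\ge\tfrac14\iiRs\iS\left|\bn\sqrt{f^\epsilon f^\epsilon_*}\right|^2 B^\epsilon$. The affine representation $\tfrac14|X|^2=\sup_{\xi}\left\{\tfrac12 X\cdot\xi-\tfrac14|\xi|^2\right\}$ with $\xi=\bn\psi$, for $\psi$ in the admissible test class of \Cref{sec:landiss} (smooth, compactly supported, symmetric in $v\leftrightarrow v_*$, and supported away from the diagonal $\{v=v_*\}$ when $\gamma\in[-4,-2)$), then gives
\[
D_B^\epsilon(f^\epsilon)\ \ge\ \iiRs\iS\left(\tfrac12\,\bn\sqrt{f^\epsilon f^\epsilon_*}\cdot\bn\psi-\tfrac14|\bn\psi|^2\right)B^\epsilon .
\]
Applying the collisional change of variables $(v,v_*,\sigma)\mapsto(v',v_*',\sigma')$, under which $B^\epsilon$ is invariant and $\bn\psi\mapsto-\bn\psi$, collapses the cross term to $-2\iiRs\sqrt{f^\epsilon f^\epsilon_*}\left(\iS\bn\psi\,B^\epsilon\right)$, so that only the \emph{strong} limit of $\sqrt{f^\epsilon f^\epsilon_*}$ enters.

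Then I would pass to the limit $\epsilon\downarrow 0$ on the right. Writing $B^\epsilon\sin\theta=|v-v_*|^\gamma\beta^\epsilon$ and rescaling $\chi=\pi\theta/\epsilon$, \Cref{lem:epszerobnpsiv2}, \Cref{lem:epszerobnpsiavgv2}, the finite angular momentum transfer \eqref{eq:betaint}, and the computation of $\iSk p\otimes p\,dp$ (\Cref{lem:pilem}) yield the pointwise limits $\iS\bn\psi\,B^\epsilon\to 2\,\tn\cdot\tn\psi$ and $\iiRs\iS|\bn\psi|^2 B^\epsilon\to 8\iiRs|\tn\psi|^2$; the third estimate of \Cref{lem:estbnv2} provides the $\epsilon$-uniform bound $\left|\iS\bn\psi\,B^\epsilon\right|\lesssim|v-v_*|^{2+\gamma}$, and since $|v'-v_*'|=|v-v_*|$ all supports stay in a fixed compact $(v,v_*)$-set, so the singular weight is harmless on the chosen class (for $\gamma\in[-4,-2)$ it is bounded there). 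Combining these with $\sqrt{f^\epsilon f^\epsilon_*}\to\sqrt{ff_*}$ in $L^1_{loc}$ and integrating $\tn$ by parts, the right-hand side converges to $4\iiRs(\tn\sqrt{ff_*})\cdot\tn\psi-2\iiRs|\tn\psi|^2$. Taking $\liminf_{\epsilon\downarrow 0}$ and then the supremum over admissible $\psi$ gives
\[
\liminf_{\epsilon\downarrow 0}D_B^\epsilon(f^\epsilon)\ \ge\ \sup_{\psi}\left\{4\iiRs(\tn\sqrt{ff_*})\cdot\tn\psi-2\iiRs|\tn\psi|^2\right\}=D_L(f),
\]
with the convention $D_L(f)=+\infty$ when $\sqrt{ff_*}$ fails the regularity making the supremum finite.

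The main obstacle is twofold. First, the cross term is nonlinear in $f^\epsilon$ (affine only in $\sqrt{f^\epsilon f^\epsilon_*}$), so the strong $L^2_{loc}$ compactness of $\sqrt{f^\epsilon}$ is genuinely needed, and it is available only conditionally on the dissipation bound --- hence the reduction to $\liminf_{\epsilon\downarrow 0}D_B^\epsilon(f^\epsilon)<\infty$ and the appeal to Appendix~\ref{sec:strcpct}. Second, for very soft potentials $\gamma\in[-4,-2)$ the weight $|v-v_*|^{2+\gamma}$ is singular on the diagonal, forcing test functions that vanish near $\{v=v_*\}$; the delicate point is then the density claim that this restricted class still computes $D_L(f)$ as a supremum, which is where the extra integrability hypotheses and the admissible-test-class construction of \cite{CDDW20} come in.
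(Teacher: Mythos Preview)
Your proposal is correct and follows the same route as the paper: reduce to the case of uniformly bounded dissipation, upgrade to $\sqrt{f^\epsilon}\to\sqrt f$ in $L^2_{loc}$ via Appendix~\ref{sec:strcpct}, minorize $D_B^\epsilon$ by the reduced dissipation $\iiRs\iS|\sqrt{{f^\epsilon}'{f^\epsilon}'_*}-\sqrt{f^\epsilon f^\epsilon_*}|^2B^\epsilon$, express this via the affine representation with test functions $\psi$, de-symmetrize the cross term to $-2\iiRs\sqrt{f^\epsilon f^\epsilon_*}\bigl(\iS\bn\psi\,B^\epsilon\bigr)$, and pass to the limit using \Cref{lem:epszerobnpsiv2,lem:epszerobnpsiavgv2}.

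Two small corrections. First, do not integrate $\tn$ by parts onto $\sqrt{ff_*}$ in the limit; the paper keeps the form $-4\iiRs\sqrt{ff_*}\,\tn\cdot\tn\psi-2\iiRs|\tn\psi|^2$, which is always well-defined since $\psi\in DS_c^\infty$, and \Cref{prop:lansqrt} shows the supremum of this expression over the test class equals $D_L(f)$ even when $D_L(f)=+\infty$. Second, your diagnosis of the density obstacle is off: no extra integrability of $f$ is needed. The paper's test class $DS_c^\infty$ vanishes near the diagonal for \emph{all} $\gamma\in[-4,0)$, and the claim that this restricted class still attains $D_L(f)$ as a supremum is proved in \Cref{sec:projtn} by an explicit projection onto the image of $\tn$: given $V\in AS_c^\infty$, one solves the Laplace--Beltrami Poisson problem~\eqref{eq:kpde} on each sphere $|v-v_*|=r$ to produce $\psi\in DS_c^\infty$ with $\tn\cdot\tn\psi=\tn\cdot(\Pi V)$ and $\|\tn\psi\|_{L^2}\le\|\Pi V\|_{L^2}$. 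This is purely a statement about the operator $\tn$ and the geometry of test functions, independent of $f$.
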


\begin{proof} 
	We first reduce to the case
	\[
	\sup_{\epsilon>0} D_B^\epsilon(f^\epsilon) < + \infty.
	\]
	Indeed, without loss of generality, we may assume that $\liminf_{\epsilon\downarrow 0} D_B^\epsilon(f^\epsilon) < + \infty$ otherwise there is nothing to show. There is a subsequence $(\epsilon_n)_{n\in\mathbb{N}}$ such that $\epsilon_n \downarrow 0$ as $n\to \infty$ for which
		\[
		\sup_{n\in\mathbb{N}}D_B^{\epsilon_n}(f^{\epsilon_n})\le \liminf_{\epsilon\downarrow 0}D_B^\epsilon(f^\epsilon)+1 < + \infty.
		\]
	In this uniformly bounded dissipation setting, we can further say $\sqrt{f^\epsilon} \to \sqrt{f}$ in $L_{loc}^2$ by~\Cref{prop:sqrtcpct}.
	
	Collecting the results of~\Cref{sec:dissrep}, we obtain the following estimates for the dissipation
		\begin{align*}
			D_B^\epsilon(f^\epsilon) &\ge \sup_{\psi \in DS_c^\infty}\left\{
			-2 \iiRs \sqrt{f^\epsilon f_*^\epsilon} \left(\iS\bn \psi B^\epsilon\right) - \frac{1}{4}\iiRs \iS |\bn \psi|^2 B^\epsilon
			\right\},   \\
			D_L(f) &\le \sup_{\psi\in DS_c^\infty} \left\{
			- 4\iiRs \sqrt{ff_*} \tn \cdot \tn \psi - 2\iiRs |\tn \psi|^2
			\right\},
		\end{align*}
		where the test functions $\psi$ belong to the following class of smooth functions
		\[
		DS_c^\infty := \left\{
		\psi = \psi(v,v_*) \in C_c^\infty(\R^6;\R) \, : \, \left.
		\begin{array}{l}
			\psi(v,v_*) = \psi(v_*,v) \, \forall v,v_* \in \R^3, \\
			\exists \delta_\psi>0 \text{ s.t. } \psi(v,v_*) = 0 \,\, \forall |v-v_*| \le \delta_\psi
		\end{array}
		\right.
		\right\}.
		\]
	Up to some constants (consistent in both expressions below), we apply the results of~\Cref{sec:convergediss} and~\Cref{lem:epszerobnpsiv2,lem:epszerobnpsiavgv2} which state
	\begin{align*}
		\iiRs \sqrt{f^\epsilon f_*^\epsilon} \left(\iS\bn \psi B^\epsilon\right) \overset{\epsilon\downarrow 0}{\to} 2\iiRs \sqrt{ff_*} \tn \cdot \tn \psi \qquad \mbox{and} \qquad
		\iiRs \iS |\bn \psi|^2 B^\epsilon \overset{\epsilon\downarrow 0}{\to} 8\iiRs |\tn \psi|^2,
	\end{align*}
	for any fixed $\psi \in DS_c^\infty$. 
\end{proof}
\subsection{Affine representation of dissipations}
\label{sec:dissrep}
In this subsection, we seek to show
	\begin{align}
		\label{eq:dissbdd}
		\begin{split}
			D_B^\epsilon(f^\epsilon) &\ge \sup_{\psi \in DS_c^\infty}\left\{
			-2 \iiRs \sqrt{f^\epsilon f_*^\epsilon} \left(\iS\bn \psi B^\epsilon\right) - \frac{1}{4}\iiRs \iS |\bn \psi|^2 B^\epsilon
			\right\},   \\
			D_L(f) &\le \sup_{\psi\in DS_c^\infty} \left\{
			- 4\iiRs \iS \sqrt{ff_*} \tn \cdot \tn \psi - 2\iiRs |\tn \psi|^2
			\right\},
		\end{split}
	\end{align}
	where $DS_c^\infty$ was introduced in the proof of~\Cref{prop:gammadiss}. For ease of notation, we will drop the $\epsilon$ superscripts for the Boltzmann dissipation. Recall, the Boltzmann and Landau dissipations can be written as
\begin{align*}
	D_B(f) &= \frac{1}{4}\iiRs \iS [f'f_*' - ff_*]\log \frac{f'f_*'}{ff_*}B, \\
	D_L(f) &= 2\iiRs |v-v_*|^{2+\gamma}|\Pi[v-v_*](\nabla - \nabla_*)\sqrt{ff_*}|^2.
\end{align*}
The proof of the upper bound for the Landau dissipation in~\eqref{eq:dissbdd} is the content of the subsequent~\Cref{sec:landiss,sec:projtn} while the lower bound for the Boltzmann dissipation in~\eqref{eq:dissbdd} is the content of~\Cref{sec:boltzdiss}.
\subsubsection{Landau dissipation}
\label{sec:landiss}
We begin with a preliminary expression of the Landau dissipation.
\begin{proposition}
	\label{prop:lansqrt}
	We can express
	\begin{equation}
		\label{eq:sqrtlan}
		D_L(f) = \sup_{\xi \in C_c^\infty(\R^6;\R^3)}\left\{
		-4\iiRs \sqrt{ff_*} |v-v_*|^{1+\frac{\gamma}{2}}(\nabla - \nabla_*) \cdot (\Pi[v-v_*]\xi) - 2 \iiRs |\xi|^2
		\right\}.
	\end{equation}
\end{proposition}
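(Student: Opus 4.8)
The plan is to establish the claimed variational identity
\[
D_L(f) = \sup_{\xi \in C_c^\infty(\R^6;\R^3)}\left\{
-4\iiRs \sqrt{ff_*}\, |v-v_*|^{1+\frac{\gamma}{2}}(\nabla - \nabla_*) \cdot (\Pi[v-v_*]\xi) - 2 \iiRs |\xi|^2
\right\}
\]
by the standard two-sided argument for quadratic-versus-supremum-of-affine representations. Write $w := \tn\sqrt{ff_*} = |v-v_*|^{1+\frac{\gamma}{2}}\Pi[v-v_*](\nabla-\nabla_*)\sqrt{ff_*}$, so that $D_L(f) = 2\iint |w|^2$. For the lower bound ``$\ge$'', fix any $\xi \in C_c^\infty(\R^6;\R^3)$. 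Integration by parts moves the operator $(\nabla-\nabla_*)\cdot(\Pi[v-v_*]\,\cdot\,)$ off $\xi$ and onto $\sqrt{ff_*}$; using that $(\nabla-\nabla_*)|v-v_*| \perp \Pi[v-v_*]$ (as recalled after \eqref{eq:lantest2}) and the symmetry of $\Pi$, one obtains
\[
-4\iiRs \sqrt{ff_*}\,|v-v_*|^{1+\frac{\gamma}{2}}(\nabla-\nabla_*)\cdot(\Pi[v-v_*]\xi) = 4\iiRs w\cdot \big(|v-v_*|^{1+\frac{\gamma}{2}}\Pi[v-v_*]\xi\big).
\]
Since $|\Pi[v-v_*]\xi|\le|\xi|$, the pointwise inequality $4\,w\cdot(|v-v_*|^{1+\frac\gamma2}\Pi\xi) - 2|\xi|^2 \le 4\,w\cdot(|v-v_*|^{1+\frac\gamma2}\Pi\xi) - 2|\Pi\xi|^2|v-v_*|^{2+\gamma} \le 2|w|^2$ follows by completing the square in the variable $|v-v_*|^{1+\frac\gamma2}\Pi[v-v_*]\xi$; integrating gives the supremum is $\le D_L(f)$.

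For the upper bound ``$\le$'', the natural maximizer is formally $\xi = 2\,|v-v_*|^{1+\frac{\gamma}{2}}\Pi[v-v_*](\nabla-\nabla_*)\sqrt{ff_*}$, which makes both sides equal $2\iint|w|^2$; the task is to justify that such a $\xi$ may be approximated within the admissible class $C_c^\infty(\R^6;\R^3)$. First one may assume $D_L(f)<\infty$, so $w \in L^2(\R^6;\R^3)$. The approximation is carried out in two steps: a truncation/cutoff step, replacing $w$ by $w\,\chi_R$ with $\chi_R$ a smooth spatial cutoff supported where $|v|^2+|v_*|^2 \le R^2$ and (crucially) where $|v-v_*|\ge 1/R$, so as to keep the weight $|v-v_*|^{1+\gamma/2}$ (which may blow up as $v\to v_*$ when $\gamma<-2$) under control; and a mollification step. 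Under these approximations $\xi_R \to 2w$ in $L^2$ and $-2\iint|\xi_R|^2 \to -2\iint|w|^2$, while the cross term, after the same integration by parts as above, converges to $4\iint|w|^2$; hence the supremum is $\ge 2\iint|w|^2 = D_L(f)$.

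The main obstacle is the approximation in the upper bound near the diagonal $\{v=v_*\}$, where the weight $|v-v_*|^{1+\gamma/2}$ is singular for $\gamma\in[-4,-2)$ and where $\Pi[v-v_*]$ is not even continuous. One must check that excising a neighbourhood $\{|v-v_*|<1/R\}$ costs only $o(1)$ in the $L^2$ norm of $w$ (which is immediate from $w\in L^2$ and dominated convergence), and that after mollification the identity obtained by integration by parts is still exact — i.e. that no boundary contribution from the cutoff survives in the limit. A secondary technical point is ensuring that $\sqrt{ff_*}$ has enough regularity for the integration by parts to be legitimate at the approximate level; this is handled by first mollifying $\sqrt{f}$ as well, or by noting that the identity
\[
-4\iiRs \sqrt{ff_*}\,|v-v_*|^{1+\frac{\gamma}{2}}(\nabla-\nabla_*)\cdot(\Pi[v-v_*]\xi) = 4\iiRs (\tn\sqrt{ff_*})\cdot\big(|v-v_*|^{1+\frac\gamma2}\Pi[v-v_*]\xi\big)
\]
holds in the distributional sense whenever the right-hand side is finite, which is exactly the regime $D_L(f)<\infty$ we have reduced to.
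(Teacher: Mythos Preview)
Your integration-by-parts identity is miscomputed. Since $w = |v-v_*|^{1+\gamma/2}\Pi[v-v_*](\nabla-\nabla_*)\sqrt{ff_*}$ already carries the weight and the projection, moving $(\nabla-\nabla_*)$ onto $\sqrt{ff_*}$ yields
\[
-4\iint \sqrt{ff_*}\,|v-v_*|^{1+\frac{\gamma}{2}}(\nabla-\nabla_*)\cdot(\Pi[v-v_*]\xi) \;=\; 4\iint w\cdot\xi,
\]
not $4\iint w\cdot\big(|v-v_*|^{1+\gamma/2}\Pi[v-v_*]\xi\big)$. Your displayed identity double-counts the weight and projection. This error propagates: the pointwise step $-2|\xi|^2 \le -2|v-v_*|^{2+\gamma}|\Pi\xi|^2$ is false in general (take $|v-v_*|$ large for $\gamma>-2$, or small for $\gamma<-2$), and your ``natural maximizer'' should be $\xi = w$, not $2w$. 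Once corrected, the lower bound is the one-line estimate $4w\cdot\xi - 2|\xi|^2 \le 2|w|^2$, exactly as the paper does via Cauchy--Schwarz and Young.

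For the reverse inequality your route diverges from the paper's. You reduce to $D_L(f)<\infty$ and approximate $w$ directly by cut-off and mollified $\xi_R\in C_c^\infty$, worrying (rightly) about the singular diagonal $\{v=v_*\}$. The paper instead assumes the \emph{supremum} side $I_L(f)<\infty$, observes that this forces the linear map $\xi\mapsto -4\iint\sqrt{ff_*}\,|v-v_*|^{1+\gamma/2}(\nabla-\nabla_*)\cdot(\Pi\xi)$ to be bounded on $L^2$, and invokes Riesz/Lax--Milgram to produce its representative $u\in L^2$; this $u$ \emph{is} $\tn\sqrt{ff_*}$ by definition of the weak derivative, and density of $C_c^\infty$ in $L^2$ finishes. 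The paper's approach is cleaner on two counts: it never needs to build an explicit approximating sequence avoiding the diagonal, and it handles the case $D_L(f)=\infty$ automatically (if $I_L(f)<\infty$ then Riesz forces $w\in L^2$, so $D_L(f)<\infty$; otherwise both sides are $+\infty$). Your scheme can be made to work, but as written it does not address $D_L(f)=\infty$, where you would have to exhibit test functions with unbounded values rather than approximate a nonexistent $L^2$ element.
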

\begin{proof}
	Let us denote the right-hand side of~\eqref{eq:sqrtlan} by $I_L(f)$; we want to show equality $D_L(f) = I_L(f)$.
	
	Showing $I_L(f) \le D_L(f)$ is straight-forward because if $D_L(f)< + \infty$, we can integrate by parts the differential operator $|v-v_*|^{1+\frac{\gamma}{2}}(\nabla-\nabla_*)\cdot \Pi[v-v_*]$ onto $\sqrt{ff_*}$ (since finite dissipation implies $\tn \sqrt{ff_*} \in L^2$) and then apply Cauchy-Schwarz and Young's inequality in the following way for fixed $\xi\in C_c^\infty(\R^6;\R^3)$
	\begin{align*}
		&-4\iiRs \sqrt{ff_*} |v-v_*|^{1+\frac{\gamma}{2}}(\nabla - \nabla_*) \cdot (\Pi[v-v_*]\xi) - 2 \iiRs |\xi|^2\\
		&\qquad\qquad= 4\iiRs |v-v_*|^{1+\frac{\gamma}{2}}\Pi[v-v_*]\left[(\nabla-\nabla_*)\sqrt{ff_*}\right]\cdot \xi - 2\iiRs |\xi|^2 \\
		&\qquad\qquad\le 4\left(
		\iiRs |v-v_*|^{2+\gamma}|\Pi[v-v_*](\nabla-\nabla_*)\sqrt{ff_*}|^2
		\right)^\frac{1}{2}\left(
		\iiRs |\xi|^2
		\right)^\frac{1}{2} -2 \iiRs |\xi|^2 \le D_L(f).
	\end{align*}
	
	Turning to the other direction, we wish to show that $D_L(f) \le I_L(f)$. We assume here that $I_L(f) < +\infty$ or else there is nothing to show. Define the linear operator acting on $\xi \in C_c^\infty(\R^6; \R^3)$ given by
	\[
	F : \xi \mapsto -4 \iiRs \sqrt{ff_*} |v-v_*|^{1+\frac{\gamma}{2}}(\nabla - \nabla_*)\cdot (\Pi[v-v_*]\xi).
	\]
	Since we assume $I_L(f) < +\infty$, we have
	\begin{align*}
		\sup_{\xi \in C_c^\infty, \|\xi\|_{L^2} = 1} \left\{\langle F, \xi\rangle - 2\right\} \le C < + \infty,
	\end{align*}
	So by density, $F$ extends uniquely to a bounded linear operator on $L^2(\R^6;\R^3)$. We consider now the continuous, coercive, symmetric, and bilinear form
	\[
	a(\xi,\eta) = 2 \iiRs \xi \cdot \eta, \quad \forall \xi,\eta \in L^2(\R^6;\R^3).
	\]
	By Lax-Milgram/Riesz representation, there is a unique Riesz representative $u\in L^2(\R^6;\R^3)$ such that
	\[
	2\iiRs u\cdot \xi = \langle F, \xi\rangle, \quad \forall \xi \in L^2(\R^6;\R^3).
	\]
	Moreover, $u = \tn \sqrt{ff_*}$ in $L^2$ and this characterizes the minimisation problem
	\[
	\frac{1}{2}a(u,u) - \langle F, u\rangle = \min_{\xi\in L^2}\left\{
	\frac{1}{2}a(\xi,\xi) - \langle F, \xi\rangle
	\right\}.
	\]
	Using the definitions of $a,\,\phi, $ and $u = \tn \sqrt{ff_*}$ as well as density, the above gives
	\[
	-2 \iiRs |\tn \sqrt{ff_*}|^2 = \inf_{\xi \in C_c^\infty}\left\{
	2 \iiRs |\xi|^2 + 8 \iiRs \sqrt{ff_*}|v-v_*|^{1+\frac{\gamma}{2}}(\nabla - \nabla_*)\cdot (\Pi[v-v_*]\xi)
	\right\}.
	\]
	Applying one sign change gives precisely
	$
	D_L(f) = I_L(f).
	$
\end{proof}
Intuitively, a near optimal $\xi \in C_c^\infty(\R^6;\R^3)$ in the right-hand side of~\eqref{eq:sqrtlan} needs to approximate $\tn \sqrt{ff_*}$. We have formulated~\eqref{eq:sqrtlan} in a big space of test functions without taking advantage of the anti-symmetry of variable swapping $v \leftrightarrow v_*$. To take advantage of symmetries, we define
\[
AS = \{V = V(v,v_*) \in L^2(\R^6;\R^3) \, | \, V(v,v_*) = -V(v_*,v) \text{ a.e. }v,v_*\in\R^3\},
\]
together with the smooth and compactly supported approximations
\[
AS_c^\infty = \left\{V = V(v,v_*) \in C_c^\infty(\R^6;\R^3) \, \left| \begin{array}{l}
	V(v,v_*) = - V(v_*,v) \, \forall v,\,v_* \in \R^3, \\
	\exists \delta >0 \text{ s.t. } V(v,v_*) = 0 \,\, \forall |v-v_*| \le \delta
\end{array}\right.\right\}.
\]
The anti-symmetry allows to write the following identity
\begin{equation}
	\label{eq:antisymm}
	V(v,v_*) = \frac{1}{2}(V(v,v_*) - V(v_*,v)), \quad \forall V \in AS.
\end{equation}
To shorten some notation, we will write
\[
\as{V}(v,v_*) := V(v_*,v), \quad \forall v,v_*\in\R^3.
\]
Using~\eqref{eq:antisymm}, it is easy to see that $AS$ is a closed subspace of $L^2(\R^6;\R^3)$ and hence is a Hilbert space with the $L^2$ inner product. Moreover, we have density of the smooth compactly supported approximations.
\begin{lemma}
	\label{lem:ASdense}
	$AS_c^\infty$ is dense in $AS$ with respect to the $L^2$ topology.
\end{lemma}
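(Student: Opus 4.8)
The plan is to prove density in two successive reductions: first I would reduce to approximating an element of $AS$ that is already smooth and compactly supported (but possibly nonzero near the diagonal $\{v=v_*\}$), and then I would cut off a shrinking neighbourhood of the diagonal without destroying smoothness, compact support, or antisymmetry.

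For the first reduction, consider the antisymmetrization map $P\colon W \mapsto \tfrac12(W - \as{W})$. Since the swap $(v,v_*)\mapsto (v_*,v)$ is a smooth measure-preserving involution of $\R^6$, $P$ is a bounded linear operator on $L^2(\R^6;\R^3)$ with $\|P\|\le 1$, it maps $C_c^\infty(\R^6;\R^3)$ into itself, and by~\eqref{eq:antisymm} it restricts to the identity on $AS$. Hence, given $V\in AS$, take $W_n\in C_c^\infty(\R^6;\R^3)$ with $W_n\to V$ in $L^2$ (the standard density of test functions); then $PW_n\to PV=V$ in $L^2$, and each $PW_n$ is smooth, compactly supported, and antisymmetric. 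Thus it suffices to approximate an arbitrary antisymmetric $V\in C_c^\infty(\R^6;\R^3)$ by elements of $AS_c^\infty$.

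For the second reduction, fix such a $V$ and pick $\chi\in C^\infty([0,\infty))$ with $0\le\chi\le 1$, $\chi\equiv 0$ on $[0,1]$, and $\chi\equiv 1$ on $[2,\infty)$; set $\chi_\delta(v,v_*):=\chi(|v-v_*|/\delta)$. Because $\chi$ is constant near $0$, the function $\chi_\delta$ is genuinely smooth on $\R^6$ (the lack of smoothness of $|v-v_*|$ at $v=v_*$ is annihilated), it depends only on $|v-v_*|$ hence is invariant under $v\leftrightarrow v_*$, and it vanishes on $\{|v-v_*|\le\delta\}$. Therefore $V_\delta:=\chi_\delta V$ is smooth, compactly supported, antisymmetric (a symmetric scalar times an antisymmetric vector field), and identically zero for $|v-v_*|\le\delta$, i.e. $V_\delta\in AS_c^\infty$. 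Finally, $|V-V_\delta| = (1-\chi_\delta)|V| \le |V|\,\indicator{|v-v_*|\le 2\delta}$, which tends to $0$ pointwise off the Lebesgue-null diagonal $\{v=v_*\}$ as $\delta\downarrow 0$ and is dominated by $|V|\in L^2$; dominated convergence gives $\|V-V_\delta\|_{L^2}\to 0$. A routine diagonal argument combining the two reductions yields the claim.

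The argument presents no serious obstacle; the only points needing a little care are that $P$ is $L^2$-continuous and fixes $AS$ (so the first approximation stays inside the subspace), and that the radial cutoff $\chi_\delta$ simultaneously preserves all three defining properties of $AS_c^\infty$ while introducing only an $L^2$-error supported on a shrinking tube around the measure-zero diagonal.
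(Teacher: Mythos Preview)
Your proposal is correct; in fact the paper does not give a proof at all, stating only that the result ``follows from standard well-known arguments.'' Your two-step reduction (antisymmetrize a $C_c^\infty$ approximant via $P$, then multiply by a smooth radial cutoff $\chi_\delta$ of the diagonal) is exactly the kind of standard argument the authors are alluding to.
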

We skip the proof of \Cref{lem:ASdense}, as it follows from standard well-known arguments. Using density we can show the next characterization.
\begin{proposition}
	\label{prop:sqrtlanantisymm}
	The Landau dissipation can be written as
	\begin{equation}
		\label{eq:sqrtlanantisymm}
		D_L(f) = \sup_{V \in AS_c^\infty}\left\{
		-4 \iiRs \sqrt{ff_*} |v-v_*|^{1+\frac{\gamma}{2}}(\nabla - \nabla_*)\cdot (\Pi[v-v_*]V) -2 \iiRs |\Pi[v-v_*]V|^2
		\right\}.
	\end{equation}
\end{proposition}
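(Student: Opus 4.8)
The plan is to read off \eqref{eq:sqrtlanantisymm} from the representation \eqref{eq:sqrtlan} already established in \Cref{prop:lansqrt}, exploiting two structural facts about the class $AS_c^\infty$: every $V\in AS_c^\infty$ vanishes in a neighbourhood of the diagonal $\{v=v_*\}$, so that $\Pi[v-v_*]V$ is \emph{again} a member of $C_c^\infty(\R^6;\R^3)$ (indeed of $AS_c^\infty$), the singularity of the projection matrix being harmless there; and $\Pi[v-v_*]$ is symmetric and idempotent, $\Pi^2=\Pi$. Abbreviate $\ell(\xi):=\iiRs\sqrt{ff_*}\,|v-v_*|^{1+\frac{\gamma}{2}}(\nabla-\nabla_*)\cdot(\Pi[v-v_*]\xi)$, so \eqref{eq:sqrtlan} reads $D_L(f)=\sup_{\xi\in C_c^\infty}\{-4\ell(\xi)-2\iiRs|\xi|^2\}$ and the quantity inside the supremum of \eqref{eq:sqrtlanantisymm} at a field $V$ is $-4\ell(V)-2\iiRs|\Pi[v-v_*]V|^2$. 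For ``$\le$'' I would substitute $\xi:=\Pi[v-v_*]V$ into \eqref{eq:sqrtlan}: since $\Pi^2=\Pi$ one has $\ell(\Pi[\cdot]V)=\ell(V)$ while $|\Pi[\cdot]V|^2$ is unchanged, so the right-hand side of \eqref{eq:sqrtlan} dominates the supremand of \eqref{eq:sqrtlanantisymm} at $V$; taking the supremum over $V\in AS_c^\infty$ yields ``$\le$''.

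For ``$\ge$'' it suffices to show that the supremum in \eqref{eq:sqrtlanantisymm} is at least $-4\ell(\xi)-2\iiRs|\xi|^2$ for every $\xi\in C_c^\infty(\R^6;\R^3)$, via two reductions. \emph{Antisymmetrisation:} the volume-preserving swap $v\leftrightarrow v_*$ fixes $\sqrt{ff_*}$, $|v-v_*|$ and $\Pi[v-v_*]$ but reverses $(\nabla-\nabla_*)$, whence $\ell(\as{\xi})=-\ell(\xi)$; by \eqref{eq:antisymm} the antisymmetric part $\xi^{as}:=\frac12(\xi-\as{\xi})$ then satisfies $\ell(\xi^{as})=\ell(\xi)$, while $\iiRs|\xi^{as}|^2\le\iiRs|\xi|^2$ since $\|\as{\xi}\|_{L^2}=\|\xi\|_{L^2}$; hence $-4\ell(\xi^{as})-2\iiRs|\xi^{as}|^2\ge-4\ell(\xi)-2\iiRs|\xi|^2$. \emph{Cut-off at the diagonal:} take $\chi_\delta=\rho(|v-v_*|/\delta)$ with $\rho$ smooth, $\rho\equiv0$ on $[0,1]$, $\rho\equiv1$ on $[2,\infty)$, and set $V_\delta:=\Pi[v-v_*](\chi_\delta\,\xi^{as})$, which lies in $AS_c^\infty$ (antisymmetric, smooth, supported away from the diagonal). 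Using $\Pi^2=\Pi$ and the contraction $|\Pi[v-v_*]W|\le|W|$, the supremand of \eqref{eq:sqrtlanantisymm} at $V_\delta$ equals $-4\ell(\chi_\delta\xi^{as})-2\iiRs|\Pi[v-v_*](\chi_\delta\xi^{as})|^2\ge-4\ell(\chi_\delta\xi^{as})-2\iiRs|\chi_\delta\xi^{as}|^2$; as $\delta\downarrow0$, dominated convergence handles the quadratic term, and granting $\ell(\chi_\delta\xi^{as})\to\ell(\xi^{as})$ this tends to $-4\ell(\xi^{as})-2\iiRs|\xi^{as}|^2\ge-4\ell(\xi)-2\iiRs|\xi|^2$. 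Taking first $\delta\downarrow0$, then the supremum over $\xi$, and invoking \eqref{eq:sqrtlan} finishes ``$\ge$''. (When $D_L(f)<\infty$ one may instead bypass the cut-off using \Cref{lem:ASdense}: $\tn\sqrt{ff_*}$ belongs to $AS$ and is pointwise $\Pi[v-v_*]$-valued, so for $V_n\in AS_c^\infty$ with $V_n\to\tn\sqrt{ff_*}$ in $L^2$ one integrates $\tn$ by parts — legitimate since each $V_n$ vanishes near the diagonal and $\Pi[v-v_*](\nabla-\nabla_*)|v-v_*|^{1+\frac{\gamma}{2}}=0$ — to get $-4\ell(V_n)=4\iiRs\tn\sqrt{ff_*}\cdot V_n\to4\iiRs|\tn\sqrt{ff_*}|^2$ while $\iiRs|\Pi[v-v_*]V_n|^2\to\iiRs|\tn\sqrt{ff_*}|^2$, so the supremand at $V_n$ converges to $2\iiRs|\tn\sqrt{ff_*}|^2=D_L(f)$.)

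The one genuinely delicate point — and the only place the range $\gamma\in[-4,0)$ is really used — is the claim $\ell(\chi_\delta\xi)\to\ell(\xi)$ in the cut-off step. Splitting $(\nabla-\nabla_*)\cdot(\Pi[v-v_*]\chi_\delta\xi)=\chi_\delta\,(\nabla-\nabla_*)\cdot(\Pi[v-v_*]\xi)+((\nabla-\nabla_*)\chi_\delta)\cdot\Pi[v-v_*]\xi$, the first piece converges by dominated convergence because near the diagonal $|v-v_*|^{1+\frac{\gamma}{2}}\,|(\nabla-\nabla_*)\cdot(\Pi[v-v_*]\xi)|\lesssim|v-v_*|^{\gamma/2}$, which is integrable against $\sqrt{ff_*}$ thanks to $\gamma/2\ge-2>-3$ together with $\int\sqrt{f(y+x)f(y-x)}\,dy\le1$ (Cauchy--Schwarz in the momentum/relative-velocity variables); the second piece is supported on the thin shell $\{\delta\le|v-v_*|\le2\delta\}$, where $|(\nabla-\nabla_*)\chi_\delta|\lesssim\delta^{-1}$, $|v-v_*|^{1+\frac{\gamma}{2}}\lesssim\delta^{1+\frac{\gamma}{2}}$, and the shell carries $\sqrt{ff_*}$-mass $\lesssim\delta^{3}$, so this piece is $\mathcal{O}(\delta^{3+\gamma/2})\to0$ (only $\gamma>-6$ is needed). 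Everything else — the two reductions, the idempotency manipulations, and the density input \Cref{lem:ASdense} — is routine.
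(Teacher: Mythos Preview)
Your argument is correct. The paper, however, takes a more economical route: it simply reruns the Riesz/Lax--Milgram argument of \Cref{prop:lansqrt} with the Hilbert space $L^2(\R^6;\R^3)$ replaced by its closed subspace $AS$ and the test class $C_c^\infty$ replaced by $AS_c^\infty$ (invoking the density \Cref{lem:ASdense}); since $\tn\sqrt{ff_*}$ is itself antisymmetric, the Riesz representative is unchanged and one lands directly on $D_L(f)=\sup_{V\in AS_c^\infty}\{-4\ell(V)-2\iiRs|V|^2\}$, after which the projection inequality $|\Pi V|\le|V|$ finishes. Your main argument instead stays at the level of \eqref{eq:sqrtlan} and reaches $AS_c^\infty$ by two explicit reductions --- antisymmetrisation and a diagonal cut-off with a careful $\ell(\chi_\delta\xi)\to\ell(\xi)$ analysis --- which is more laborious but has the merit of making the singularity accounting near $v=v_*$ completely explicit and of not appealing to \Cref{lem:ASdense}. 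Your parenthetical alternative (approximating $\tn\sqrt{ff_*}\in AS$ by $V_n\in AS_c^\infty$ when $D_L(f)<\infty$) is essentially the paper's proof spelled out. One small remark: your ``$\le$'' and ``$\ge$'' labels are swapped relative to the usual convention (your ``$\le$'' paragraph actually proves the supremum is $\le D_L(f)$, i.e.\ the $\ge$ direction of $D_L(f)=\sup$), but the content is unambiguous.
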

\begin{proof}
	Replace $L^2(\R^6;\R^3)$ and $C_c^\infty(\R^6;\R^3)$ with $AS$ and $AS_c^\infty$, respectively and follow the same proof of~\Cref{prop:lansqrt} taking advantage of~\eqref{eq:antisymm}. This would lead to the majorant
	\[
	\sup_{V \in AS_c^\infty}\left\{
	-4 \iiRs \sqrt{ff_*} |v-v_*|^{1+\frac{\gamma}{2}}(\nabla - \nabla_*)\cdot (\Pi[v-v_*]V) -2 \iiRs |V|^2
	\right\}.
	\]
	Since $\Pi$ is a projection, we have $|\Pi[v-v_*]V| \le |V|$. Estimating  the second term of the affine representation in this way completes the proof.
\end{proof}

\subsubsection{Projecting a vector field onto the image of $\tn$}
\label{sec:projtn}
Starting from~\eqref{eq:sqrtlanantisymm}, our goal now is to replace the vector field $V \in AS_c^\infty$ by $\tn\psi$ for some $\psi\in DS_c^\infty$. In this section, the role of $DS_c^\infty$ which we introduced in the proof of~\Cref{prop:gammadiss} will be clarified. Given any $V \in AS_c^\infty$, we find $\psi$ such that
\begin{align*}
	&-4 \iiRs \sqrt{ff_*} |v-v_*|^{1+\frac{\gamma}{2}}(\nabla - \nabla_*)\cdot (\Pi[v-v_*] V)-2 \iiRs |\Pi[v-v_*]V|^2\\
	&\qquad\le -4 \iiRs \sqrt{ff_*} \tn\cdot\tn \psi-2 \iiRs |\tn \psi|^2.
\end{align*}
Said $\psi$ can be characterized by the projection of $V$ (or equivalently $\Pi[v-v_*]V)$ onto the image of $\tn$. More explicitly, $\psi$ will be obtained as the solution to the following minimization problem
\begin{equation}
	\label{eq:projtn}
	\min_{g\in DS_c^\infty}\iiRs |\tn g - \Pi[v-v_*]V|^2 = \iiRs |\tn \psi - \Pi[v-v_*]V|^2.
\end{equation}
We begin by investigating the solvability of the first order condition of this convex minimization problem which is given by the PDE in the following result.
	\begin{lemma}
		\label{lem:lemlaptn}
		For $V \in AS_c^\infty$, there exists a unique smooth solution $\psi\in DS_c^\infty$ to the following equation
		\begin{equation}
			\label{eq:tnPoisson}
			\tn \cdot \tn \psi = \tn \cdot (\Pi[v-v_*]V).
		\end{equation}
		Moreover, $\psi$ solves~\eqref{eq:projtn} and we have
		\begin{equation}
			\label{eq:L2decomp}
			\|\Pi[v-v_*]V\|^2 = \|\tn \psi\|^2 + \|\Pi[v-v_*]V - \tn \psi\|^2.
		\end{equation}
	\end{lemma}
	\begin{proof}
		After changing variables, we will construct $\psi$ as a superposition of solutions to the Laplace-Beltrami equation on spheres $|v-v_*|=r$.  Recalling some of the notations from~\Cref{sec:formalgc}, for fixed $v,v_*\in \R^3$ we consider the smooth and volume preserving coordinate transformation
		\[
		(v,v_*) \mapsto \left(
		\frac{v-v_*}{2}, \frac{v+v_*}{2}
		\right) =: (x,y).
		\]
		Given vector fields $V = V(v,v_*)$ and scalar functions $\psi = \psi(v,v_*)$, we will use the same symbols to denote their versions under this and future coordinate transformations $V = V(v,v_*) = V(x,y)$ and similarly for $\psi$. It is readily checked that $\nabla - \nabla_* = \nabla_x$ and similarly for the divergence. Notice that when $V \in AS_c^\infty$,~\eqref{eq:tnPoisson} reads
		\[
		|2x|^{2+\gamma}\nabla_x \cdot (\Pi[x]\nabla_x\psi(x,y)) = |2x|^{1+\frac{\gamma}{2}}\nabla_x\cdot (\Pi[x]V(x,y)),
		\]
		which is an equation in the $x = \frac{1}{2}(v-v_*)$ variable only. Henceforth, we consider fixed $y = \frac{1}{2}(v+v_*)$ as a parameter to the problem above. To further specify the problem, the compact support and anti-symmetry of $V \in AS_c^\infty$ translate into compactness in both the $x,y$ variables, and moreover $V$ vanishes in a neighbourhood of $\{x = 0\}$. So for some $0 < \delta \le R$ depending on the support of $V$, but uniform in the $y = \frac{1}{2}(v+v_*)$ variable, we consider the following elliptic PDE with homogeneous Dirichlet boundary conditions
		\begin{equation}
			\label{eq:xpde}
			\left\{
			\begin{array}{ll}
				|2x|^{1+\frac{\gamma}{2}}\nabla_x \cdot (\Pi[x]\nabla_x\psi(x,y)) = \nabla_x\cdot (\Pi[x]V(x,y))     &0 < \delta \le |2x| \le R  \\
				\psi(x,y) = 0     &|2x| \in \{\delta,R\}
			\end{array}
			\right..
		\end{equation}
		The weight $|2x|^{1+\frac{\gamma}{2}}$ on the left-hand side is well-behaved, since we avoid a neighbourhood of the singularity ${x=0}$. To reiterate, we will solve \eqref{eq:xpde} in $x$ for fixing the value of $y$ as a parameter. Since the dependence on $y$ of $V$ is smooth, it will follow any solution $\psi$ of~\eqref{eq:xpde} is also smooth in $y$. In terms of solvability of~\eqref{eq:xpde}, we make one further change of variables. Having fixed $y$ as a parameter, we consider the spherical decomposition of 
		\[
		x = rk, \quad r = |x|, \quad k = \frac{x}{|x|} \in \Stwo.
		\]
		Under these coordinates, we again identify $\psi = \psi(x,y) = \psi(k,r,y)$ and similarly for $V$. By the identities of~\Cref{lem:LB} and Corollary~\ref{cor:deromega}, we can consider $r\in [\delta,R]$ as another parameter so that~\eqref{eq:xpde} becomes an equation in the spherical variable $k$
		\begin{equation}
			\label{eq:kpde}
			\left\{
			\begin{array}{ll} 
				\Delta_{\Stwo}\psi(k,r,y) = 2^{-1-\frac{\gamma}{2}}r^{-\frac{\gamma}{2}}\nabla_k\cdot (\Pi[k]V(k,r,y))     &0 < \delta \le 2r \le R, k\in \Stwo  \\
				\psi(k,r,y) = 0     &2r \in \{\delta,R\}
			\end{array}
			\right..
		\end{equation}
		The interpretation of~\eqref{eq:kpde} is that, at every level set of $|x|$, \eqref{eq:xpde} is actually the Poisson problem for the Laplace-Beltrami operator in $\Stwo$ for the $k = \frac{x}{|x|}$ variable. Noticing that the right-hand side is a divergence of a smooth function, the integral over $\Stwo$ of the right-hand side vanishes, which is a necessary condition for solvability of the Poisson problem in a compact manifold. Using the usual method of the Lax-Milgram Theorem combined with the Poincar\'e inequality on the sphere (see~\cite{H99}), for each fixed $r\in(\delta,R)$ and $y\in\R^3$ \eqref{eq:kpde} admits a unique weak solution $\psi(\cdot,r,y) \in H^1(\Stwo)$ which is also smooth by standard elliptic regularity arguments. Finally, since $V$ is anti-symmetric when swapping $v \leftrightarrow v_*$ (which means reflecting $x \leftrightarrow - x$, or $k \leftrightarrow - k$), uniqueness of solutions gives that $\psi$ is symmetric 
		\[
		\psi(v,v_*) = \psi(v_*,v) \iff \psi(x,y) = \psi(-x,y)\iff \psi(k,r,y) = \psi(-k,r,y).
		\]
		Based on the regularity and symmetries of $V\in AS_c^\infty$, the previous discussion implies $\psi \in DS_c^\infty$. Returning to the minimization problem of~\eqref{eq:projtn}, we can deduce from convexity and our discussion on the solubility of the first order conditions \eqref{eq:tnPoisson} that there exists a unique $\psi\in DS_c^\infty$ such that
		\[
		\inf_{g\in DS_c^\infty}\|\tn g - \Pi[v-v_*]V\|_{L_{v,v_*}^2}^2 = \|\tn \psi - \Pi[v-v_*]V\|_{L_{v,v_*}^2}^2.
		\]
		We can interpret the solution operator for~\eqref{eq:kpde} as the orthogonal projection map of $V$ and $\Pi[v-v_*]V$ to the image of $\tn$. To see~\eqref{eq:L2decomp}, we add and subtract $\tn \psi$ in the $L^2$ norm of $\Pi[v-v_*]V$ to get
		\begin{align*}
			\|\Pi[v-v_*]V\|^2 =& \iiRs |\Pi[v-v_*]V - \tn \psi + \tn \psi|^2 \\
			= &\| \tn \psi\|^2 + \|\Pi[v-v_*]V - \tn \psi \|^2 + 2 \iiRs \tn \psi \cdot (\Pi[v-v_*]V - \tn \psi) \\
			= &\| \tn \psi\|^2 + \|\Pi[v-v_*]V - \tn \psi \|^2 - 2 \iiRs \psi \underbrace{(\tn\cdot (\Pi[v-v_*]V) - \tn \cdot \tn \psi)}_{=0}.
		\end{align*}
		The last line is obtained by integrating by parts the differential operator $\tn$ using the smoothness and compact support of $\psi$ and $V$. Of course, by our construction of $\psi$, the cross term contributes nothing owing to~\eqref{eq:tnPoisson}.
	\end{proof}
Using~\Cref{lem:lemlaptn}, we can further majorize the Landau dissipation from~\eqref{eq:sqrtlanantisymm}
\begin{align}
	\label{eq:sqrtlandiff}
	\begin{split}
		D_L(f) &\le \sup_{\psi \in DS_c^\infty} \left\{
		-4 \iiRs \sqrt{ff_*} \tn\cdot \tn \psi - 2\iiRs |\tn \psi|^2
		\right\}.
	\end{split}
\end{align}

\subsubsection{Boltzmann dissipation}
\label{sec:boltzdiss}
Before directly manipulating the Boltzmann dissipation, we insist on the appearance of a finite difference of $\sqrt{ff_*}$. Using~\Cref{cor:lambdaf}, we can lower bound the Boltzmann dissipation by
	\begin{align*}
		&\quad D_B(f) = \frac{1}{4}\iiRs \iS [f'f_*' - ff_*]\log \frac{f'f_*'}{ff_*} B \ge \iiRs \iS |\sqrt{f'f_*'}  - \sqrt{ff_*}|^2 B.
	\end{align*}
Let us refer to the lower bound as the reduced Boltzmann dissipation
\[
D_B^R(f) := \iiRs \iS |\sqrt{f'f_*'} - \sqrt{ff_*}|^2 B \,d\sigma dv_* dv.
\]
We want a similar representation for the reduced Boltzmann dissipation as we had for the Landau dissipation.

\begin{lemma} \label{lem:sqrtboltzdiss1}
	The reduced Boltzmann dissipation can be expressed as
	\begin{equation}\label{Rboltzmann}
		D_B^R(f) = \sup_{\tilde{\psi} \in L^2(\R^6 \times \Stwo; B(v-v_*,\theta)d\sigma dv_* dv)}\left\{
		2 \iiRs \iS (\sqrt{f'f_*'} - \sqrt{ff_*})\tilde{\psi} B d\sigma dv_* dv - \iiRs \iS |\tilde{\psi}|^2 B d\sigma dv_* dv\right\}.
	\end{equation}
\end{lemma}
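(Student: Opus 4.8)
The plan is to recognize Lemma~\ref{lem:sqrtboltzdiss1} as the elementary variational characterization of the squared norm in the weighted Hilbert space $H:=L^2(\R^6\times\Stwo;\,B(v-v_*,\theta)\,d\sigma dv_* dv)$, applied to the element $g:=\sqrt{f'f_*'}-\sqrt{ff_*}$. Indeed $D_B^R(f)=\iiRs\iS|g|^2 B=\|g\|_H^2$, so the assertion is precisely the identity $\|g\|_H^2=\sup_{\tilde\psi\in H}\{2\langle g,\tilde\psi\rangle_H-\|\tilde\psi\|_H^2\}$, with the supremum attained at $\tilde\psi=g$. The only algebraic input is the pointwise inequality $2ab-b^2\le a^2$ for $a,b\in\R$, with equality iff $b=a$.

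For the bound ``$\le$'' I would fix an arbitrary $\tilde\psi\in H$ and apply $2ab-b^2\le a^2$ pointwise with $a=g$ and $b=\tilde\psi$, obtaining $2g\tilde\psi-\tilde\psi^2\le g^2$ a.e.; integrating against the nonnegative measure $B\,d\sigma dv_* dv$ then gives
\[
2\iiRs\iS g\tilde\psi\, B-\iiRs\iS|\tilde\psi|^2 B\le \iiRs\iS|g|^2 B=D_B^R(f),
\]
uniformly in $\tilde\psi$. When $D_B^R(f)<\infty$ the first integral on the left is absolutely convergent by Cauchy--Schwarz since then $g\in H$; when $D_B^R(f)=\infty$ the inequality is vacuous. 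For the reverse bound, if $D_B^R(f)<\infty$ then $g\in H$ is itself an admissible test function and plugging in $\tilde\psi=g$ returns $\iiRs\iS|g|^2 B=D_B^R(f)$, so the supremum is attained and equality holds. If $D_B^R(f)=\infty$, I would exhibit a maximizing sequence by truncation: writing the $\sigma$-finite measure $B\,d\sigma dv_* dv$ as an increasing union of sets $E_n$ of finite measure (on $\{|v|,|v_*|\le n,\ |v-v_*|\ge1/n,\ 1/n\le\theta\le\pi/2\}$ the measure is finite by~\ref{ass:beta}, and these exhaust the space up to the $B$-null sets $\{v=v_*\}$, $\{\theta=0\}$ and $\{\theta>\pi/2\}$), set $\tilde\psi_n:=g\,\indicator{\{1/n\le|g|\le n\}\cap E_n}$, which is bounded with support of finite $B$-measure, hence in $H$. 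Then $2\iiRs\iS g\tilde\psi_n B-\iiRs\iS|\tilde\psi_n|^2 B=\iiRs\iS |g|^2\indicator{\{1/n\le|g|\le n\}\cap E_n}B\uparrow\iiRs\iS|g|^2 B=\infty$ by monotone convergence, so the supremum is $+\infty$ as well.

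The two points that deserve a line of justification are the measurability of $g$ — immediate, since $f$ is measurable and the pre-post collision change of variables $(v,v_*,\sigma)\mapsto(v',v_*')$ is measurable — and the $\sigma$-finiteness of $B\,d\sigma dv_* dv$, used only in the degenerate case $D_B^R(f)=\infty$. Neither is a genuine obstacle: the lemma is just the Legendre--Fenchel duality $(\tfrac12\|\cdot\|_H^2)^*=\tfrac12\|\cdot\|_H^2$ made explicit for this weighted $L^2$ space, and I expect the entire argument to occupy a few lines in the finite-dissipation regime (the one actually invoked later, via \Cref{prop:sqrtcpct}), with the truncation step recorded only for completeness.
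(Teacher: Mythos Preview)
Your proposal is correct and is essentially the same approach as the paper's: the paper simply says the proof is analogous to that of \Cref{prop:lansqrt} in the Hilbert framework of $L^2(B\,d\sigma dv_* dv)$, and both arguments amount to the variational characterization $\|g\|_H^2=\sup_{\tilde\psi\in H}\{2\langle g,\tilde\psi\rangle_H-\|\tilde\psi\|_H^2\}$. Your version is slightly more direct---you use the pointwise inequality $2ab-b^2\le a^2$ and plug in $\tilde\psi=g$, whereas the paper (via \Cref{prop:lansqrt}) routes through Lax--Milgram/Riesz representation---but this is a difference in presentation, not substance.
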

\begin{proof}
	This proof is analogous to the proof of~\Cref{prop:lansqrt} in the Hilbert framework of $L^2$ with respect to the collision kernel $B$.
\end{proof}
As with the Landau dissipation, we seek to pass all the difference structure onto the test function.  Taking advantage of various pre-post collision velocity symmetries, we extend~\Cref{lem:sqrtboltzdiss1} to
\begin{lemma}
	\label{lem:sqrtboltzdiss2}
	The reduced Boltzmann dissipation can be minorized by 
	\[
	D_B^R(f) \ge \sup_{\psi \in DS_c^\infty}\left\{
	-2 \iiRs \sqrt{ff_*}\left(
	\iS \bn \psi B
	\right) - \frac{1}{4}\iiRs \iS |\bn \psi|^2 B
	\right\}.
	\]
\end{lemma}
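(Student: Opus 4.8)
The strategy is to feed a well-chosen test function into the variational representation of $D_B^R(f)$ supplied by \Cref{lem:sqrtboltzdiss1}. Given $\psi \in DS_c^\infty$, the plan is to take $\tilde{\psi} := \tfrac{1}{2}\bn\psi$ in \eqref{Rboltzmann}; I expect that with this choice the functional being supremised collapses exactly to $-2\iiRs\sqrt{ff_*}\bigl(\iS\bn\psi\,B\bigr) - \tfrac14\iiRs\iS|\bn\psi|^2 B$, so that the claimed lower bound follows at once by passing to the supremum over $\psi$. This is the mirror image, on the Boltzmann side, of the affine-representation computations already carried out for the Landau dissipation in \Cref{prop:sqrtlanantisymm}.

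There are two things to verify. First, that $\tilde{\psi} = \tfrac12\bn\psi$ is admissible, i.e.\ lies in $L^2(\R^6\times\Stwo;\,B(|v-v_*|,\theta)\,d\sigma\,dv_*\,dv)$. Since $\psi\in C_c^\infty$ vanishes on $\{|v-v_*|\le\delta\}$ for some $\delta>0$, the discrete gradient $\bn\psi$ is supported on the set where at least one of the two pairs $(v,v_*)$, $(v',v_*')$ lies in $\operatorname{supp}\psi$; because $|v-v_*|=|v'-v_*'|$ and $\tfrac12(v+v_*)=\tfrac12(v'+v_*')$ are collisional invariants, on this set $(v,v_*)$ is confined to a fixed compact set on which $|v-v_*|\ge\delta$. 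Combining the first estimate of \Cref{lem:estbnv2}, namely $|\bn\psi|\le \operatorname{Lip}_x(\psi)\,|v-v_*|\,|\sigma-k|$, with $|\sigma-k|^2=2(1-\cos\theta)\lesssim\theta^2$ and the finite angular momentum transfer \eqref{eq:betaint}, one obtains $\iiRs\iS|\bn\psi|^2 B<\infty$, so $\tilde{\psi}$ is an admissible test function.

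Second, and this is the only genuine computation, is the identity $\iiRs\iS\sqrt{f'f_*'}\,\bn\psi\,B = -\iiRs\iS\sqrt{ff_*}\,\bn\psi\,B$. It comes from the classical pre-post collisional change of variables: the map $(v,v_*,\sigma)\mapsto(v',v_*',k)$ is an involution with unit Jacobian that leaves $B(|v-v_*|,\theta)$ invariant and exchanges the pairs $(v,v_*)\leftrightarrow(v',v_*')$; under it $\sqrt{f'f_*'}\mapsto\sqrt{ff_*}$, while $\bn\psi$ is odd, since swapping the two pairs sends $\psi(v',v_*')+\psi(v_*',v')-\psi(v,v_*)-\psi(v_*,v)$ to its negative. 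Consequently
\[
\iiRs\iS(\sqrt{f'f_*'}-\sqrt{ff_*})\,\bn\psi\,B = -2\iiRs\iS\sqrt{ff_*}\,\bn\psi\,B = -2\iiRs\sqrt{ff_*}\Bigl(\iS\bn\psi\,B\Bigr).
\]
Substituting $\tilde{\psi}=\tfrac12\bn\psi$ into \eqref{Rboltzmann} then yields $2\iiRs\iS(\sqrt{f'f_*'}-\sqrt{ff_*})\tfrac12\bn\psi\,B - \iiRs\iS|\tfrac12\bn\psi|^2 B = -2\iiRs\sqrt{ff_*}\bigl(\iS\bn\psi\,B\bigr) - \tfrac14\iiRs\iS|\bn\psi|^2 B$, so $D_B^R(f)$ dominates this quantity; taking the supremum over $\psi\in DS_c^\infty$ completes the proof.

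There is no real obstacle here: the pre-post collisional symmetry plays exactly the role that integration by parts for $\tn$ plays on the Landau side, and everything else is bookkeeping. The only point needing a little attention is the admissibility check for $\tfrac12\bn\psi$, where both the support properties built into $DS_c^\infty$ (compactness together with vanishing near the diagonal) and the finite angular momentum transfer \eqref{eq:betaint} are used.
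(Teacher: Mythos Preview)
Your proof is correct and follows essentially the same route as the paper: choose $\tilde\psi=\tfrac12\bn\psi$ in \eqref{Rboltzmann}, verify admissibility via the support properties of $DS_c^\infty$ together with the first estimate of \Cref{lem:estbnv2} and \eqref{eq:betaint}, and then use the pre-post collisional involution to de-symmetrise the linear term. The paper's write-up differs only cosmetically, exploiting the symmetry $\psi(v,v_*)=\psi(v_*,v)$ to write $\tfrac12\bn\psi=\psi'-\psi$ before carrying out the same change of variables.
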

\begin{proof}
	We are interested in showing the inequality, hence we only need to show that each $\psi \in DS_c^\infty$ induces an admissible $\tilde{\psi} \in L^2(B)$ and then de-symmetrize the first term in \eqref{Rboltzmann}.

	More specifically, for each $\psi \in DS_c^\infty$
	\[
	\frac{1}{2}\bn \psi= \psi(v',v_*') - \psi(v,v_*)=: \tilde{\psi}(v,v_*,\sigma) \in L^2(\R^6\times \Stwo;Bd\sigma dv_* dv)
	\]
	is an admissible test function for \eqref{Rboltzmann}. This is a consequence of~\Cref{lem:estbnv2} which provides the estimate
	\begin{equation}\label{est1}
		|\psi' - \psi|^2 \lesssim \indicator{0 < \delta \le |v-v_*| \le R} |v-v_*|^2 |\sigma - k|^2,
	\end{equation}
	where $\delta, R>0$ are the inner and outer radii of the support of $\psi\in DS_c^\infty$ with respect to $v-v_*$. 
	
	Next, we de-symmetrize the first term of \eqref{Rboltzmann}
	\begin{align*}
		2\iiRs \iS (\sqrt{f'f_*'} - \sqrt{ff_*})
		\tilde{\psi} B &= 2\iiRs \iS \sqrt{f'f_*'}\iS (\psi' - \psi)B  - 2\iiRs \sqrt{ff_*} \iS (\psi' - \psi) B \\
		&= - 4\iiRs \sqrt{ff_*}\iS (\psi'-\psi) B,
	\end{align*}
	which is justified as each of the integrals above are absolutely convergent. This follows from the estimate \eqref{est1}, $|\sigma - k|^2 \sim \theta^2$ from~\eqref{eq:equivmom}, and the finite angular momentum transfer assumption~\eqref{eq:betaint}. The desired inequality now follows.
\end{proof}

\subsection{Boltzmann gradient converges to Landau gradient}
\label{sec:convergediss}
The aim of this section is to show that for arbitrary $\psi \in DS_c^\infty$, we obtain
\begin{align}
	\iiRs \sqrt{f^\epsilon f_*^\epsilon} \left(\iS\bn \psi B^\epsilon\right) &\overset{\epsilon\downarrow 0}{\to} 2\iiRs \sqrt{ff_*} \tn \cdot \tn \psi \label{eq:int1diss} \\
	\iiRs \iS |\bn \psi|^2 B^\epsilon &\overset{\epsilon\downarrow 0}{\to} 8\iiRs |\tn \psi|^2. \label{eq:int2diss}
\end{align}
These limits are the final pieces needed to finish the proof of~\Cref{prop:gammadiss}. Recall there that we had reduced to the case with bounded dissipation for a fixed time
\[
\sup_{\epsilon>0}D_B^\epsilon(f^\epsilon) < + \infty,
\]
which implies the local $L_{loc}^2$ convergence $\sqrt{f^\epsilon} \to \sqrt{f}$ which we will use in the proof below.

The key to these limits is understanding the limiting behaviour of $\bn$ on $DS_c^\infty$ functions. Naturally, the crucial ingredients are~\Cref{lem:epszerobnpsiv2,lem:epszerobnpsiavgv2} which state
\begin{equation}
	\label{eq:bnpsiv2}
	\forall \psi \in DS_c^\infty, \quad \left\{
	\begin{array}{lcl}
		\frac{1}{\epsilon}\bn \psi     &\overset{\epsilon\downarrow 0}{\to} &\frac{\chi}{\pi} |v-v_*| p\cdot (\nabla - \nabla_*) \psi  \\
		\frac{1}{\epsilon^2}\iSk \bn \psi dp     &\overset{\epsilon\downarrow 0}{\to}    &\frac{\chi^2}{4\pi} (\nabla -\nabla_*) \cdot (|v-v_*|^2 \Pi[v-v_*](\nabla - \nabla_*)\psi)
	\end{array}
	\right..
\end{equation}
Recall the notation that $\chi = \pi\theta/\epsilon$ is the rescaled angle of collision and $p \in \Stwo$ is the orthogonal vector to $k = \frac{v-v_*}{|v-v_*|}$ shown in~\Cref{fig:pic1,fig:spherical,fig:phi}. These limits hold in the pointwise sense.
\begin{proof}[Proof of~\eqref{eq:int1diss} and~\eqref{eq:int2diss}]
	We begin with showing~\eqref{eq:int1diss}. Since we know $\sqrt{f^\epsilon} \to \sqrt{f}$, it remains to show that
	\[
	\iS \bn \psi B^\epsilon = |v-v_*|^\gamma\int_{\theta=0}^{\epsilon/2} \beta^\epsilon(\theta) \iSk \bn \psi \, dp d\theta \overset{\epsilon\downarrow 0}{\to} 2\tn\cdot \tn \psi.
	\]
	Since $\psi\in DS_c^\infty$, we are in the nice situation of avoiding all problems in the $v,\,v_*$ variables because $\psi$ is supported in 
	\[
	|v|+|v_*| \le R, \quad 0 < \delta \le |v-v_*| \le R,
	\]
	for some $\delta,R>0$. Combining this localisation with the third estimate of~\Cref{lem:estbnv2},
	\[
	\left|
	\iSk \bn \psi dp
	\right|\lesssim_\psi |v-v_*|^2 \theta^2,
	\]
	we get domination in $L^2$ recalling finite angular momentum transfer~\eqref{eq:betaint}, so we only have to show pointwise a.e. convergence of the previous limit. By rescaling $\theta = \epsilon\chi/\pi$ and applying~\eqref{eq:bnpsiv2}, we have
	\begin{align*}
		\iS \bn \psi B^\epsilon &= |v-v_*|^\gamma \int_0^{\pi/2} \beta(\chi)\frac{\pi^2}{\epsilon^2}\iSk \bn \psi dp d\chi \\
		&\overset{\epsilon\downarrow 0}{\to} |v-v_*|^{\gamma}\frac{\pi}{4}\int_0^{\pi/2}\chi^2\beta(\chi) (\nabla - \nabla_*)\cdot (|v-v_*|^2\Pi[v-v_*](\nabla - \nabla_*)\psi) = 2\tn \cdot \tn \psi,
	\end{align*}
	again recalling the normalization $\int_0^{\pi/2}\chi^2\beta(\chi)d\chi = 8/\pi$.
	
	We turn to showing~\eqref{eq:int2diss}. Arguing as in the proof of~\eqref{eq:int1diss}, since $\psi$ is compactly supported in $v,v_*$, and in $\{|v-v_*| \ge \delta > 0\}$, we only need to show
	\[
	\iS |\bn \psi|^2 B^\epsilon = |v-v_*|^\gamma\int_{\theta=0}^{\epsilon/2}\beta^\epsilon(\theta)\iSk |\bn \psi|^2 dp d\theta \overset{\epsilon \downarrow 0}{\to} 8|\tn \psi|^2, \quad \text{a.e. }v,v_*\in\R^3.
	\]
	This is because of the first estimate of~\Cref{lem:estbnv2} which again gives the right majorant against $B^\epsilon$
	\[
	|\bn \psi|^2 \lesssim_\psi |v-v_*|^2 \theta^2.
	\]
	By the same rescaling $\theta = \epsilon\chi/\pi$, we have
	\begin{align*}
		\iS |\bn \psi|^2 B^\epsilon &= |v-v_*|^\gamma \int_0^{\pi/2}\!\!\!\!\!\beta(\chi) \pi^2\iSk \left|
		\frac{1}{\epsilon}\bn \psi
		\right|^2 dp d\chi \overset{\epsilon\downarrow 0}{\to} |v-v_*|^{2+\gamma} \int_0^{\pi/2} \!\!\!\!\!\beta(\chi) \chi^2 \iSk |p \cdot (\nabla - \nabla_*)\psi|^2 dp d\chi \\
		&= \frac{8}{\pi} |v-v_*|^{2+\gamma}\iSk p^ip^j (\nabla- \nabla_*)^i\psi (\nabla - \nabla_*)^j \psi dp = 8 |v-v_*|^{2+\gamma} |\Pi[v-v_*](\nabla-\nabla_*)\psi|^2.
	\end{align*}
	In the last line, we have used~\Cref{lem:pilem} for the computation of
	\[
	\iSk p\otimes p dp = \pi \Pi[k],
	\]
	while remembering that since $\Pi[k]$ is a projection matrix, the quadratic form satisfies
	$
	z^T \Pi[k]z = |\Pi[k]z|^2$, for all $z \in \R^3.
	$
\end{proof}

\section{Lower semicontinuous convergence of metric derivatives in the grazing collision limit}
\label{sec:gammamd}
We consider a sequence of curves $(f^\epsilon)_{\epsilon>0}$ satisfying the uniform moment and metric derivative bounds~\eqref{eq:bounds}. In particular, we know that a convergent subsequence exists to $f$ by~\Cref{cor:limcurve}. Along this subsequence and in parallel to the general affine representation strategy in~\Cref{sec:gammadiss}, we seek to prove
\begin{proposition}
	\label{prop:gammamd}
	Consider a sequence of curves $(f^\epsilon)_{\epsilon>0}$ satisfying the uniform bounds in second moment, entropy, dissipation, and metric derivative~\eqref{eq:bounds}. Along possibly a further subsequence for which $f^\epsilon\overset{\sigma}{\rightharpoonup} f$ from~\Cref{cor:limcurve}, we have
	\[
	\liminf_{\epsilon \downarrow 0} |\dot{f}^\epsilon|_\epsilon^2(t) \ge |\dot{f}|_L^2(t), \quad \text{a.e. } t\in [0,T].
	\]
\end{proposition}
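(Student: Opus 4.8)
The plan is to mirror the affine--representation scheme of \Cref{sec:gammadiss}, but applied to the \emph{actions} rather than the dissipations, using \Cref{lem:Erbarmetvel} to write the metric derivatives as actions of optimal collision rates. For a.e.\ $t$ there is a unique optimal $M^\epsilon(t)\in\mathcal M_B$ with $(f^\epsilon,M^\epsilon)\in CRE$ and $|\dot f^\epsilon|_\epsilon^2(t)=\mathcal A_B(f^\epsilon(t),M^\epsilon(t))$. Legendre duality of the $1$--homogeneous, jointly convex integrand of $\mathcal A_B$ in the $M$--variable gives, for any test vector field $\Phi$,
\[
|\dot f^\epsilon|_\epsilon^2(t)\ \ge\ \iiRs\iS \Phi\, M^\epsilon(t)\ -\ \iiRs\iS |\Phi|^2\,\Lambda(f^\epsilon(t))\,B^\epsilon .
\]
I would take $\Phi=c\,\bn\psi$ with $c\in\R$ and $\psi\in C_c^\infty(\R^3)$ a single--variable test function (these are the ones on which the collision rate equation is formulated); testing $\partial_t f^\epsilon+\tfrac14\bn\cdot M^\epsilon=0$ against $\zeta(t)\psi(v)$ shows $\iiRs\iS\bn\psi\,M^\epsilon(t)=4\,\tfrac{d}{dt}\iR\psi f^\epsilon(t)$, so that
\[
|\dot f^\epsilon|_\epsilon^2(t)\ \ge\ 4c\,\tfrac{d}{dt}\iR\psi f^\epsilon(t)\ -\ c^2\iiRs\iS |\bn\psi|^2\,\Lambda(f^\epsilon(t))\,B^\epsilon .
\]
On the Landau side, the analogue of \Cref{lem:Erbarmetvel} (its minimisation/projection characterisation of the optimal velocity, which lies in the $L^2(ff_*\,dv_*dv)$--closure of $\{\tn\psi:\psi\in C_c^\infty(\R^3)\}$) together with Legendre duality and the grazing continuity equation yields the matching affine representation
\[
|\dot f|_L^2(t)\ =\ \sup_{\psi\in C_c^\infty(\R^3)}\Big\{\,2\,\tfrac{d}{dt}\iR\psi f(t)\ -\ \tfrac12\iiRs f(t) f_*(t)\,|\tn\psi|^2\,\Big\}.
\]

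The passage to the limit $\epsilon\downarrow 0$ in the lower bound rests on three ingredients, all available under \eqref{eq:bounds}. First, the single--variable cases of \Cref{lem:epszerobnpsiv2,lem:epszerobnpsiavgv2} give, pointwise in $(v,v_*)$ and with an integrable majorant against $B^\epsilon$, that $\iS\bn\psi\,B^\epsilon\to\tn\cdot\tn\psi$ and $\iS |\bn\psi|^2 B^\epsilon\to 2|\tn\psi|^2$. Second, the bound $\Lambda(f^\epsilon)\le\tfrac12(f'f_*'+ff_*)$ from \Cref{cor:lambdaf} together with the pre--post collision symmetry lets one replace $\Lambda(f^\epsilon)B^\epsilon$ by $f^\epsilon f_*^\epsilon B^\epsilon$ in the quadratic term; the strong convergence $\sqrt{f^\epsilon}\to\sqrt f$ in $L^2_{loc}$ (at a.e.\ fixed time, after extracting along a subsequence for which $D_B^\epsilon(f^\epsilon(t))$ stays bounded, via \Cref{prop:sqrtcpct}) then gives $\iiRs\iS |\bn\psi|^2\Lambda(f^\epsilon)B^\epsilon\to 2\iiRs f f_*|\tn\psi|^2$. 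Third, the scaled collision rates $M^\epsilon(t)$ are compact at a.e.\ fixed time by the Cauchy--Schwarz estimate of \Cref{prop:limm} applied at a single time (using the uniform second moment), so along a further subsequence $\iiRs\iS\bn\psi\,M^\epsilon(t)$ converges; arguing as in \Cref{sec:lanlim} the limit equals $2\iiRs\tn\psi\cdot N(t)$ for some $\mathcal M_L$--valued $N(t)$, and passing to the limit in the collision rate equation identifies $N(t)$ as an admissible Landau mobility for $f$, i.e.\ $\iiRs\tn\psi\cdot N(t)=2\,\tfrac{d}{dt}\iR\psi f(t)$.

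Assembling these, for a.e.\ $t$ one obtains, for every $\psi\in C_c^\infty(\R^3)$ and $c\in\R$,
\[
\liminf_{\epsilon\downarrow 0}|\dot f^\epsilon|_\epsilon^2(t)\ \ge\ 4c\,\tfrac{d}{dt}\iR\psi f(t)\ -\ 2c^2\iiRs f(t) f_*(t)\,|\tn\psi|^2 .
\]
Optimising over $c$ gives $\liminf_{\epsilon}|\dot f^\epsilon|_\epsilon^2(t)\ge 2\big(\tfrac{d}{dt}\iR\psi f(t)\big)^2\big/\iiRs f(t)f_*(t)|\tn\psi|^2$, and taking the supremum over $\psi$ — the same scalar optimisation shows the right--hand side of the Landau affine representation equals this supremum — yields exactly $|\dot f|_L^2(t)$, which is the claim.

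I expect the main obstacle to be that, unlike the Boltzmann dissipation $D_B^\epsilon(f^\epsilon(t))$, which depends only on $f^\epsilon(t)$, the metric derivative $|\dot f^\epsilon|_\epsilon^2(t)=\mathcal A_B(f^\epsilon(t),M^\epsilon(t))$ also involves the collision rate $M^\epsilon(t)$, a velocity--type object; one therefore cannot work at a single time in isolation, and the delicate points are the fixed--time compactness of $M^\epsilon(t)$ and, above all, the identification of the limiting rate $N(t)$ as a bona fide Landau mobility compatible with $\partial_t f(t)$. This requires reconciling the fixed--time weak-$*$ limits of $M^\epsilon(t)$ with the time--integrated compactness of \Cref{sec:lanlim} (equivalently, a time--localisation: test the dual inequality against $\zeta(t)\ge 0$, integrate, and differentiate in $t$ by the Lebesgue differentiation theorem, using that $f$ is absolutely continuous with respect to $d_L$, which itself follows from the finite total Landau action of the limit curve obtained in \Cref{sec:lanlim}); one must also check the integrability near the diagonal $\{v=v_*\}$ of the quantities produced by single--variable test functions, which is controlled by the uniform moment and dissipation bounds.
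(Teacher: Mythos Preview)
Your approach is essentially the paper's: write both metric derivatives via the affine representation obtained from the minimisation/projection characterisation of optimal rates in \Cref{lem:Erbarmetvel}, pass to the limit in the linear term using the scaled compactness of $M^\epsilon$ and the lift to an admissible Landau mobility (this is exactly \Cref{sec:lanlim} and the paper's \eqref{eq:liftconverge}, \eqref{eq:liftdual}), and control the quadratic term by a $\limsup$ inequality. Your rephrasing of the linear term as $4\,\tfrac{d}{dt}\int\psi f^\epsilon$ and the extra scalar $c$ are cosmetic; after the continuity equations these coincide with the paper's \eqref{eq:mdrep}.

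The one substantive difference is in the quadratic term. You invoke the strong compactness $\sqrt{f^\epsilon}\to\sqrt f$ in $L^2_{loc}$ from \Cref{prop:sqrtcpct}, which requires a subsequence along which $D_B^\epsilon(f^\epsilon(t))$ is bounded at the given $t$. The hypotheses \eqref{eq:bounds} only bound the time integral $\int_0^T D_B^\epsilon(f^\epsilon)\,dt$, so such a subsequence exists for a.e.\ $t$ (by Fatou), but it need not coincide with the subsequence along which $M^\epsilon(t)$ converges, and reconciling these is exactly the kind of fixed-time/time-integrated mismatch you flag at the end. The paper avoids this entirely: after $\Lambda(f^\epsilon)\le \tfrac12({f^\epsilon}'{f^\epsilon}_*'+f^\epsilon f_*^\epsilon)$ and symmetrising, one is left with $\iiRs f^\epsilon f_*^\epsilon\bigl(\iS|\bn\psi|^2 B^\epsilon\bigr)$; the inner $\sigma$-integral converges pointwise to $2|\tn\psi|^2$ with a uniform subquadratic majorant $1+|v-v_*|^l$ ($l<2$), and $f^\epsilon\otimes f_*^\epsilon\rightharpoonup f\otimes f_*$ along the \emph{original} subsequence (weak convergence of product measures, uniformly integrable against subquadratic growth by the second-moment bound). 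This gives \eqref{eq:limsupnabla} without any dissipation bound at time $t$ and without further extraction, and is the cleaner route.
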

As in~\Cref{sec:dissrep}, we will prove~\Cref{prop:gammamd} by looking at the affine representations of the metric derivatives. Without loss of generality, we assume $\sup_{\epsilon>0}|\dot{f}^\epsilon|_\epsilon^2(t)<+\infty$ by taking a subsequence, if necessary. 
	\begin{lemma}
		\label{lem:mdrep}
		We consider $(f^\epsilon,M_B^\epsilon)$ and $(f,M)$ curves for $CRE$ (in the sense of Erbar~\cite{E19}) and $GCE$ (in the sense of~\cite{CDDW20}), respectively. We assume that $M_B^\epsilon$ and $M$ are optimal collision rates in the sense that their associated metric derivatives can be written as the respective action of these curves
		\begin{align*}
			|\dot{f}^\epsilon|_\epsilon^2(t) = \frac{1}{4}\iiRs \iS \frac{|M_B^\epsilon|^2}{\Lambda(f^\epsilon) B^\epsilon}d\sigma dv_* dv \qquad \mbox{and} \qquad
			|\dot{f}|_L^2(t) = \frac{1}{2}\iiRs \iS \frac{|M|^2}{ff_*} dv_* dv.
		\end{align*}
		Then, we have the following affine representations.
		\begin{align}
			\label{eq:mdrep}
			\begin{split}
				|\dot{f}^\epsilon|_\epsilon^2(t)    &= \sup_{\psi \in C_c^\infty(\R^3)}\left\{
				\frac{1}{2}\iiRs \iS M_B^\epsilon \bn \psi - \frac{1}{4}\iiRs \iS |\bn \psi|^2 \Lambda(f^\epsilon) B^\epsilon
				\right\}, \\
				|\dot{f}|_L^2(t)    &= \sup_{\psi \in C_c^\infty(\R^3)}\left\{
				\iiRs M\cdot \tn \psi - \frac{1}{2}\iiRs |\tn \psi|^2 ff_* 
				\right\}.
			\end{split}
		\end{align}
	\end{lemma}
	\begin{proof}
		We only show the proof for the first of these claimed equations since the argument for the Landau metric derivative is analogous. We shall also drop the superscript $\epsilon$ and abbreviate $\Lambda = \Lambda(f^\epsilon)$ for ease of notation. Arguing as in~\Cref{sec:landiss}, we have
		\begin{equation}
			\label{eq:mdrep1}
			\frac{1}{4}\iiRs \iS \frac{|M_B|^2}{\Lambda B} = \sup_{\xi \in L^2(\Lambda B)} \left\{
			\frac{1}{2}\iiRs \iS M_B\xi - \frac{1}{4}\iiRs \iS |\xi|^2 \Lambda B
			\right\}
		\end{equation}
		which follows from similar lines of reasoning as~\Cref{prop:lansqrt} and~\Cref{lem:sqrtboltzdiss1}.
		
		We now want to replace the test function $\xi \in L^2(\Lambda B)$ by $\bn \psi$ for $\psi \in C_c^\infty(\R^3)$. By the optimality of $M_B$ and the finite action assumption, \Cref{lem:Erbarmetvel}, we can write $M_B = U \Lambda B$ where the density $U$ can be approximated by
		\[
		U \in \overline{\{
			\bn \psi \, | \, \psi \in C_c^\infty(\R^3)
			\}}^{L^2(\Lambda B)}.
		\]
		Take $(\psi_n)_{n\in\mathbb{N}}\subset C_c^\infty(\R^3)$ an approximating sequence so that $\bn \psi_n \to U$ in $L^2(\Lambda B)$ as $n\to \infty$. The expression inside the supremum with $\xi  = \bn \psi_n$ of~\eqref{eq:mdrep1} has the following limit
		\begin{align*}
			& \iiRs \iS \frac{M_B}{\Lambda B} \bn \psi_n \Lambda B - \frac{1}{2}\iiRs \iS |\bn \psi_n|^2\Lambda B\overset{n\to \infty}{\to} \iiRs \iS \frac{M_B}{\Lambda B}U - \frac{1}{2}\iiRs \iS |U|^2 \Lambda B
			= \iiRs \iS \frac{|M_B|^2}{2\Lambda B}.
		\end{align*}
		This establishes the first equation of~\eqref{eq:mdrep}. As mentioned before, the second equation of~\eqref{eq:mdrep} follows analogously.
	\end{proof}
	With~\Cref{lem:mdrep}, we are in a position to prove~\Cref{prop:gammamd}.
	\begin{proof}[Proof of~\Cref{prop:gammamd}]
		
		Without loss of generality, take $M_B^\epsilon$ optimal collision rates in the sense of~\Cref{lem:Erbarmetvel} so  that the Boltzmann metric derivative is the action of $(f^\epsilon,M_B^\epsilon)$
		\[
		|\dot{f}^\epsilon|_\epsilon^2(t) = \frac{1}{4}\iiRs \iS \frac{|M_B^\epsilon|^2}{\Lambda(f^\epsilon) B^\epsilon}d\sigma dv_* dv.
		\]
		By the computations of~\Cref{sec:lanlim}, 
		$$
		\mbox{for } q  = \left\{
		\begin{array}{cl}
			1,     &\gamma \in [-2,0)  \\
			2,     &\gamma \in [-4,-2) 
		\end{array}
		\right., \mbox{ sufficiently small } 0 < \delta < \left\{
		\begin{array}{ll}
			-\frac{\gamma}{2},     &\gamma\in[-2,0)  \\
			-\frac{\gamma}{2}-1,     &\gamma\in[-4,-2) 
		\end{array}
		\right.,
		$$
		and along some subsequence $\epsilon \downarrow 0$, we have that $(f, L_{q,\delta}(M_{q,\delta}))$ is an admissible pair in the grazing continuity equation where
		\[
		|v-v_*|^q \left(1 + \left[|v|^2 + |v_*|^2 \right]^\frac{\delta}{2} \right) \theta M^\epsilon \overset{\epsilon\downarrow 0}{\rightharpoonup} M_{q,\delta}, \quad L_{q,\delta}(M_{q,\delta}) = \frac{|v-v_*|^{-\gamma/2-q}}{4\left(1 + \left[|v|^2 + |v_*|^2 \right]^\frac{\delta}{2} \right)}\iS M_{q,\delta} p d\sigma.
		\]
		As well, since $L_{q,\delta}(M_{q,\delta})$ is admissible, there exists (by \Cref{lem:Erbarmetvel}) a unique grazing rate $M\in \mathcal{M}_L$ so that we have the following inequality for the Landau metric derivative
		\[
		|\dot{f}|_L^2(t) = \frac{1}{2}\iiRs \frac{|M|^2}{ff_*}dv_* dv \le \frac{1}{2}\iiRs \frac{|L_{q,\delta}(M_{q,\delta})|^2}{ff_*}dv_* dv.
		\]
		Recall the affine representations of the metric derivatives from~\Cref{lem:mdrep}
		\[
		\frac{1}{4}\iiRs \iS \frac{|M_B^\epsilon|^2}{\Lambda(f^\epsilon)B^\epsilon} = \sup_{\psi\in C_c^\infty(\R^3)}\left\{
		\frac{1}{2}\iiRs \iS M_B^\epsilon \bn \psi - \frac{1}{4}\iiRs \iS |\bn \psi|^2 \Lambda(f^\epsilon) B^\epsilon
		\right\}
		\]
		and
		\[
		\frac{1}{2}\iiRs \frac{|M|^2}{ff_*} = \sup_{\psi\in C_c^\infty(\R^3)}\left\{
		\iiRs M\cdot \tn \psi - \frac{1}{2}\iiRs |\tn \psi|^2 ff_*
		\right\}.
		\]
		It remains to establish that, for the $L_{q,\delta}(M_{q,\delta})$ rate (coming from the sequence $\{M_B^\epsilon\}_{\epsilon}$ in $CRE$) and the corresponding unique optimal rate $M$ (coming directly $GCE$), we must have
		\begin{align}
			\label{eq:liftdual}
			\begin{split}
				\sup_{\psi\in C_c^\infty(\R^3)}\left\{
				\iiRs M\cdot \tn \psi - \frac{1}{2}\iiRs |\tn \psi|^2 ff_*
				\right\} &= \sup_{\psi\in C_c^\infty(\R^3)}\left\{
				\iiRs L_{q,\delta}(M_{q,\delta})\cdot \tn \psi - \frac{1}{2}\iiRs |\tn \psi|^2 ff_*
				\right\}.
			\end{split}
		\end{align}
		This follows because
		\[
		M = L_{q,\delta}(M_{q,\delta}) + \underbrace{(M - L_{q,\delta}(M_{q,\delta}))}_{\tn\cdot\text{-free}}.
		\]
		Here, we say `$\tn\cdot$-free' to mean that for any $\psi \in C_c^\infty(\R^3)$, we have
		\[
		\iiRs \iS \tn \psi \cdot d(M - L_{q,\delta}(M_{q,\delta})) = 0,
		\]
		which follows since both $M, \, L_{q,\delta}(M_{q,\delta})$ solve the grazing continuity equation with the same $f$
		\begin{align*}
			\frac{d}{dt}\iR \psi f_t(v) dv + \frac{1}{2}\iiRs \tn \psi \cdot dM_t(v,v_*)= \frac{d}{dt}\iR \psi f_t(v) dv + \frac{1}{2}\iiRs \tn \psi \cdot d(L_{q,\delta}(M_{q,\delta}))(v,v_*), 
		\end{align*}
		for all $\psi \in C_c^\infty(\R^3)$. Looking at the level of the dual formulations, for fixed $\psi \in C_c^\infty(\R^3)$ we claim
		\begin{align}
			\frac{1}{2}\iiRs \iS M_B^\epsilon \bn \psi &\overset{\epsilon\downarrow 0}{\to }\iiRs L_{q,\delta}(M_{q,\delta}) \cdot \tn \psi, \label{eq:liftconverge}\\
			\limsup_{\epsilon\downarrow 0}\frac{1}{4}\iiRs \iS |\bn \psi|^2 \Lambda(f^\epsilon)B^\epsilon &\le\frac{1}{2}\iiRs |\tn \psi|^2 ff_*. \label{eq:limsupnabla}
		\end{align}
		Once~\eqref{eq:liftconverge} and~\eqref{eq:limsupnabla} are proven for fixed $\psi \in C_c^\infty$, this establishes
		\[
		\liminf_{\epsilon\downarrow 0}|\dot{f}^\epsilon|_B^2(t) \ge \sup_{\psi\in C_c^\infty(\R^3)}\left\{
		\iiRs L_{q,\delta}(M_{q,\delta})\cdot \tn \psi - \frac{1}{2}\iiRs |\tn \psi|^2 ff_*
		\right\}.
		\]
		Recalling~\eqref{eq:liftdual}, the right-hand side is the affine representation of $|\dot{f}|_L^2(t)$ we can conclude the proof.
		
		To prove~\eqref{eq:liftconverge}, we use the estimates for $\bn \psi$ from~\Cref{lem:estbnv2} combined with the appropriate choice of $q\in\left[-\frac{\gamma}{2},1-\frac{\gamma}{2}\right)$ and small $\delta>0$ depending on $\gamma$. We recall the computations from~\Cref{sec:lanlim} - the first step is to reveal the correct sequence of measures from the scaled compactness~\Cref{prop:limm};
		\begin{align*}
			\iiRs \iS M_B^\epsilon \bn \psi &= \iiRs \iS \left(|v-v_*|^q\left(1 + \left[|v|^2 + |v_*|^2 \right]^\frac{\delta}{2} \right) \theta M_B^\epsilon \right)|v-v_*|^{-q}\left(1 + \left[|v|^2 + |v_*|^2 \right]^\frac{\delta}{2} \right)^{-1}\frac{\bn \psi}{\theta} \\
			&\overset{\epsilon \downarrow 0}{\to} \frac{1}{2}\iiRs |v-v_*|^{-q-\frac{\gamma}{2}}\left(1 + \left[|v|^2 + |v_*|^2 \right]^\frac{\delta}{2} \right)^{-1}\tn \psi \cdot \iS M_{q,\delta} p d\sigma dv_* dv \\
			&= 2\iiRs \tn \psi \cdot L_{q,\delta}(M_{q,\delta})dv_* dv.
		\end{align*}
		The justification for the weak-strong convergence is also found in~\Cref{sec:lanlim}.
		
		Turning to the proof of~\eqref{eq:limsupnabla}, we use~\Cref{cor:lambdaf} and symmetry, to write
		\begin{align*}
			\iiRs \iS |\bn \psi|^2 B^\epsilon\Lambda(f^\epsilon) \le \iiRs \iS |\bn \psi|^2 B^\epsilon \left[ 
			\frac{{f^\epsilon}'{f^\epsilon}_*' + f^\epsilon f_*^\epsilon}{2}
			\right] = \iiRs f^\epsilon f_*^\epsilon \left(\iS|\bn \psi|^2 B^\epsilon\right). 
		\end{align*}
		We first provide an integrable majorant (uniformly against the measure $f^\epsilon f_*^\epsilon$) for the term integrated over $\Stwo$ which is again provided by~\Cref{lem:estbnv2}. We have
		\begin{align*}
			\iS |\bn \psi|^2 B^\epsilon &= \pi^2|v-v_*|^\gamma \int_0^{\pi/2}\beta(\chi)\iSk \left|\frac{1}{\epsilon}\bn \psi\right|^2 dp d\chi \\
			&\lesssim \int_0^{\pi/2}\chi^2\beta(\chi)d\chi \left\{
			\begin{array}{cl}
				\text{Lip}(\psi)|v-v_*|
				^{2+\gamma},     &\gamma\in [-2,0)  \\
				\|D^2\psi\|_{L^\infty}|v-v_*|^{4+\gamma},     &\gamma\in [-4,-2)
			\end{array}
			\right. \lesssim_{\psi,\beta} 1+|v-v_*|^l.
		\end{align*}
		Here $0<l<2$ is some power which gives strictly subquadratic growth. By the uniformly bounded moments assumption~\eqref{eq:bounds}, this is uniformly integrable against $f^\epsilon f_*^\epsilon$ so we can pass to the weak-strong limit. According to~\Cref{lem:epszerobnpsiv2}, we have the pointwise limit
		\begin{align*}
			\iS |\bn& \psi|^2 B^\epsilon = \pi^2 |v-v_*|^\gamma \int_0^{\pi/2}\beta(\chi) \iSk \left|
			\frac{1}{\epsilon}\bn \psi
			\right|^2 dp d\chi \\
			&\overset{\epsilon \downarrow 0}{\to} \frac{1}{4}|v-v_*|^{2+\gamma}\int_0^{\pi/2}\!\!\!\!\!\chi^2 \beta(\chi) \iSk \!\!|p\cdot (\nabla - \nabla_*)\psi|^2 dp d\chi = 2|v-v_*|^{2+\gamma}|\Pi[v-v_*](\nabla - \nabla_*)\psi|^2 = 2|\tn \psi|^2.
		\end{align*}
		Here, we have used~\Cref{lem:pilem} and the usual finite angular momentum transfer~\eqref{eq:betaint}.
	\end{proof}
\begin{appendices}
	\section{Some inequalities and the spherical Laplacian}
	\label{sec:lm}
	We recall a useful extension of the AM-GM inequality which is of independent interest and provides some useful bounds concerning $\Lambda(f)$. We also write down a few useful computations for the projection matrix $\Pi$. Lastly, we give an expression for the spherical Laplacian in terms of $\Pi$.
	
	\begin{lemma}
		[ALG inequality]
		\label{lem:ALG}
		The logarithmic mean separates the arithmetic and geometric means;
		\[
		\sqrt{ab} \le \frac{b-a}{\log b - \log a} \le \frac{a+b}{2}, \quad \forall a,b>0, \, \, a\ne b. 
		\]
		Equality is achieved in any of the inequalities if and only if $a=b$ and equality is achieved in all of them, where the logarithmic mean between $a=b>0$ is defined as $a$.
	\end{lemma}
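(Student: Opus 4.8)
The plan is to base everything on the integral representation of the logarithmic mean. First I would record the identity
\[
\frac{b-a}{\log b-\log a}=\int_0^1 a^{1-t}b^t\,dt,\qquad a,b>0,\ a\neq b,
\]
which is verified by the direct computation $\int_0^1 a^{1-t}b^t\,dt=a\int_0^1 (b/a)^t\,dt=a\cdot\frac{(b/a)-1}{\log(b/a)}$. This rewrites both bounds as statements about the average of the function $t\mapsto a^{1-t}b^t=\exp\big((1-t)\log a+t\log b\big)$ over $t\in[0,1]$, i.e.\ it exhibits $L(a,b)$ as a mean of a log-affine family interpolating $a$ and $b$.

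For the lower bound $\sqrt{ab}\le\frac{b-a}{\log b-\log a}$, I would apply Jensen's inequality to the convex function $\exp$ against the probability measure $dt$ on $[0,1]$:
\[
\int_0^1 \exp\big((1-t)\log a+t\log b\big)\,dt\ \ge\ \exp\!\Big(\int_0^1\big((1-t)\log a+t\log b\big)\,dt\Big)=\exp\!\big(\tfrac12(\log a+\log b)\big)=\sqrt{ab}.
\]
Since $\exp$ is strictly convex and $t\mapsto(1-t)\log a+t\log b$ is non-constant whenever $a\neq b$, Jensen is strict, so the inequality is strict unless $a=b$.

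For the upper bound $\frac{b-a}{\log b-\log a}\le\frac{a+b}{2}$, I would use the weighted AM--GM (Young's) inequality pointwise in $t$, namely $a^{1-t}b^t\le(1-t)a+tb$, and integrate over $[0,1]$:
\[
\int_0^1 a^{1-t}b^t\,dt\ \le\ \int_0^1\big((1-t)a+tb\big)\,dt=\frac{a+b}{2},
\]
again strict unless $a=b$. The equality characterisation then follows immediately from these two strictness statements, and the convention $L(a,a):=a$ makes all three quantities coincide when $a=b$. I do not expect any real obstacle; the only nonobvious ingredient is the integral representation, and as an independent sanity check one may instead substitute $b/a=e^{2s}$ with $s>0$, which reduces the two inequalities to the elementary facts $\tanh s\le s\le\sinh s$ for $s>0$.
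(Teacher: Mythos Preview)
Your proof is correct and takes a genuinely different route from the paper. The paper follows S\'andor's argument: it establishes the pointwise bounds $\frac{4}{(t+1)^2}<\frac{1}{t}<\frac{1}{2\sqrt{t}}+\frac{1}{2t\sqrt{t}}$ for $t>1$ (the first from $(t-1)^2>0$, the second from Young's inequality applied to $t^{-1/4}\cdot t^{-3/4}$), integrates each from $1$ to $b/a$, and rearranges. Your argument instead rests on the integral representation $L(a,b)=\int_0^1 a^{1-t}b^t\,dt$, after which Jensen's inequality for $\exp$ gives the lower bound and the pointwise weighted AM--GM $a^{1-t}b^t\le (1-t)a+tb$ gives the upper bound upon integration. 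Your approach is arguably more conceptual---it makes transparent that $L(a,b)$ is literally an average of a one-parameter family interpolating $a$ and $b$, so convexity inequalities apply directly---whereas the paper's proof is a slick ad hoc trick that avoids any mention of the integral representation. Both yield the strict inequality for $a\neq b$ with equal ease.
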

	\begin{proof}
		We follow the elegant proof by S\'andor~\cite{S16}. First we claim the following
		\begin{equation}
			\label{eq:algclaim}
			\frac{4}{(t+1)^2} < \frac{1}{t} < \frac{1}{2\sqrt{t}} + \frac{1}{2t\sqrt{t}}, \quad \forall t > 1.
		\end{equation}
		The left-hand inequality of~\eqref{eq:algclaim} follows from
		$
		(t+1)^2 - 4t = (t-1)^2 > 0.
		$
		For the right-hand inequality of~\eqref{eq:algclaim}, we use Young's (strict) inequality since $t>1$
		\[
		\frac{1}{t} = \frac{1}{t^\frac{1}{4}}\cdot \frac{1}{t^\frac{3}{4}} < \frac{1}{2t^\frac{1}{2}} + \frac{1}{2t^\frac{3}{2}}.
		\]
		Assume without loss of generality that $0 < a < b$. We integrate~\eqref{eq:algclaim} from $1$ to $b/a > 1$. This leads to
		\[
		2\frac{b-a}{b+a} < \log b - \log a < \sqrt{\frac{b}{a}} - \sqrt{\frac{a}{b}} = \frac{b-a}{\sqrt{ab}}.
		\]
		Dividing by $b-a$ and inverting yields the result.
	\end{proof}
		\begin{corollary}
			For all $a, b >0$ with $a\neq b$ we have
			\[
			(a - b)\log \frac{a}{b} \ge 4|\sqrt{a} - \sqrt{b}|^2.
			\]
		\end{corollary}
		\begin{proof}
			We write $a-b$ as a difference of squares to deduce
			\begin{align*}
				(a-b)\log \frac{a}{b} &= 2(\sqrt{a} - \sqrt{b})(\sqrt{a} + \sqrt{b})(\log \sqrt{a} - \log \sqrt{b}) \\
				&=4|\sqrt{a} - \sqrt{b}|^2 \left(\frac{\sqrt{a}+\sqrt{b}}{2}\right)\left(\frac{\log \sqrt{a} - \log \sqrt{b}}{\sqrt{a} - \sqrt{b}}\right) \\
				&\ge 4|\sqrt{a} - \sqrt{b}|^2.
			\end{align*}
			In the last line, we used~\Cref{lem:ALG} after recognising the arithmetic and logarithmic means between $\sqrt{a}$ and $\sqrt{b}$.
		\end{proof}
	\begin{corollary}
		[$\Lambda(f)$ bounds]
		\label{cor:lambdaf}
		$\Lambda(f) = \frac{f'f_*' - ff_*}{\log f'f_*' - \log ff_*}$ grows `quadratically' in $f$ (in the sense of tensor products);
		\[
		\sqrt{ff_*f'f_*'} < \Lambda(f) < \frac{f'f_*' + ff_*}{2}.
		\]
		Moreover, there holds
			\[
			(f'f_*' - ff_*)\log \frac{f'f_*'}{ff_*} \ge 4|\sqrt{f'f_*'} - \sqrt{ff_*}|^2.
			\]
	\end{corollary}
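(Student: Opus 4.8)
The plan is to obtain the corollary as an immediate specialisation of Lemma~\ref{lem:ALG} (the ALG inequality) to the particular pair $a = ff_*$ and $b = f'f_*'$. First I would observe that, for a positive density $f$, both $ff_*$ and $f'f_*'$ are strictly positive, so the hypotheses of Lemma~\ref{lem:ALG} are satisfied; in the degenerate case $ff_* = f'f_*'$ the three means all coincide with this common value by the convention in Lemma~\ref{lem:ALG}, and the strict inequalities of the corollary degenerate to equalities (which is consistent with $\Lambda(f)$ being defined as the logarithmic mean).

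With the substitution $a = ff_*$, $b = f'f_*'$, the three means appearing in Lemma~\ref{lem:ALG} become, respectively, the geometric mean $\sqrt{ab} = \sqrt{ff_* \cdot f'f_*'} = \sqrt{ff_*f'f_*'}$, the logarithmic mean $\tfrac{b-a}{\log b - \log a} = \tfrac{f'f_*' - ff_*}{\log f'f_*' - \log ff_*} = \Lambda(f)$, and the arithmetic mean $\tfrac{a+b}{2} = \tfrac{f'f_*' + ff_*}{2}$. Hence the inequality chain $\sqrt{ab} < \tfrac{b-a}{\log b - \log a} < \tfrac{a+b}{2}$ supplied by Lemma~\ref{lem:ALG} reads exactly $\sqrt{ff_*f'f_*'} < \Lambda(f) < \tfrac{f'f_*' + ff_*}{2}$, which is the assertion to be proved.

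The informal phrase ``grows quadratically in $f$'' is then justified by noting that each bound is a product of two evaluations of $f$ (at pre- or post-collision velocities), so under the tensorised scaling $f \otimes f \mapsto \lambda\, f \otimes f$ both bounds scale linearly, i.e.\ $\Lambda(\cdot)$ is sandwiched between quantities that behave like $f^2$ pointwise. There is no real obstacle to overcome here: the corollary is a direct rewriting of the preceding lemma, and the only point worth a line of comment is the boundary case $ff_* = f'f_*'$, handled by the convention already fixed in Lemma~\ref{lem:ALG}.
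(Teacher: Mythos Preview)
Your proposal is correct and matches the paper's approach exactly: the corollary is stated immediately after Lemma~\ref{lem:ALG} with no separate proof, precisely because it is the direct specialisation $a = ff_*$, $b = f'f_*'$ that you describe. Your additional remark on the degenerate case $ff_* = f'f_*'$ is a sensible clarification, though the paper does not spell it out.
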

	\begin{lemma}
		\label{lem:pilem}
		Suppose $(\hat{k},\hat{h},\hat{i})$ is an orthonormal basis of $\Rthree$. There holds
		\[
		\int_0^{2\pi} (\cos\phi \hat{h} + \sin \phi \hat{i})\otimes (\cos \phi \hat{h} + \sin \phi \hat{i}) d\phi = \pi\Pi[\hat{k}] = \pi(I - \hat{k}\otimes \hat{k}).
		\]
		This is equivalent to
		\[
		\int_{\Sk} p\otimes p dp = \int_0^{2\pi} p \otimes p d\phi = \pi \Pi[k],
		\]
		where $p$ is orthonormal to $k$ with azimuthal angle $\phi$ i.e.
		$
		p = \cos \phi h + \sin \phi i.
		$
	\end{lemma}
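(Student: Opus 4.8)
The plan is a direct computation followed by a resolution-of-identity argument. First I would expand the rank-one integrand bilinearly:
\[
(\cos\phi\,\hat h + \sin\phi\,\hat i)\otimes(\cos\phi\,\hat h + \sin\phi\,\hat i) = \cos^2\phi\,\hat h\otimes\hat h + \sin^2\phi\,\hat i\otimes\hat i + \cos\phi\sin\phi\,(\hat h\otimes\hat i + \hat i\otimes\hat h).
\]
Integrating termwise over $\phi\in[0,2\pi]$ and using the elementary identities $\int_0^{2\pi}\cos^2\phi\,d\phi = \int_0^{2\pi}\sin^2\phi\,d\phi = \pi$ and $\int_0^{2\pi}\cos\phi\sin\phi\,d\phi = 0$, the cross terms drop and one is left with $\pi(\hat h\otimes\hat h + \hat i\otimes\hat i)$.

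Next I would invoke that $(\hat k,\hat h,\hat i)$ is an orthonormal basis of $\R^3$, so the spectral/resolution-of-identity decomposition gives $\hat k\otimes\hat k + \hat h\otimes\hat h + \hat i\otimes\hat i = I$. Hence $\hat h\otimes\hat h + \hat i\otimes\hat i = I - \hat k\otimes\hat k = \Pi[\hat k]$, which yields the first displayed identity. For the second displayed form, I would simply note that the construction in~\Cref{sec:spherical} sets $p = \cos\phi\,h + \sin\phi\,i$ for a fixed orthonormal pair $\{h,i\}\subset\Sk$, so that $\{k,h,i\}$ is an orthonormal basis of $\R^3$ and the change of variables $\int_{\Sk}dp = \int_0^{2\pi}d\phi$ makes the two statements literally the same after relabelling $(\hat k,\hat h,\hat i)\leftrightarrow(k,h,i)$.

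There is essentially no obstacle here; the only point requiring a word of care is that the answer must be independent of the particular choice of orthonormal pair $\{h,i\}$ spanning $\{k\}^\perp$, which is automatic since $\pi\Pi[\hat k]$ does not reference $\hat h,\hat i$ at all. One may optionally remark that $z^T\Pi[k]z = |\Pi[k]z|^2$ for all $z\in\R^3$ since $\Pi[k]$ is an orthogonal projection, which is the form in which the identity is used elsewhere in the paper.
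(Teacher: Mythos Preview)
Your proposal is correct and is essentially the same direct computation as the paper's proof: the paper writes the integrand as a $3\times 3$ matrix in the $(\hat k,\hat h,\hat i)$ basis and integrates entrywise, while you phrase the identical calculation coordinate-free via the resolution of identity $I=\hat k\otimes\hat k+\hat h\otimes\hat h+\hat i\otimes\hat i$. There is no substantive difference.
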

	\begin{proof}
		In the basis of $(\hat{k},\hat{h},\hat{i})$, we can represent the matrix in the integral as
		\begin{align*}
			\int_0^{2\pi}(\cos\phi \hat{h} + \sin \phi \hat{i})\otimes (\cos \phi \hat{h} + \sin \phi \hat{i}) d\phi &= \int_0^{2\pi} \left(
			\begin{array}{ccc}
				0     &0    &0  \\
				0     &\cos^2\phi    &\cos\phi \sin\phi \\
				0   &\cos \phi \sin\phi     &\sin^2 \phi
			\end{array}
			\right)d\phi    \\
			&= \left(
			\begin{array}{ccc}
				0     &0    &0  \\
				0     &\pi  &0  \\
				0     &0    &\pi
			\end{array}
			\right) = \pi \left(
			I - \hat{k} \otimes \hat{k}
			\right) = \pi \Pi[\hat{k}].
		\end{align*}
	\end{proof}
	
	Next we turn to explicit expressions for the spherical Laplacian/Laplace-Beltrami operator, see \cite{RimGeo} for details. Consider a smooth function $f : \R^d \to \R$. Let us write $x\in \R^d$ as $x = r\omega$ for $r = |x|$ and $\omega = \frac{x}{|x|} \in \mathbb{S}^{d-1}$. The spherical Laplacian of $f$ denoted $\Delta_{\mathbb{S}^{d-1}}f$ is obtained by 
	\[
	\Delta_{\mathbb{S}^{d-1}}f(x) = \Delta f \left(\frac{x}{|x|}\right)
	\]
	and it satisfies
	\begin{equation}
		\label{eq:sphereLaplacian}
		\Delta f = \partial_r^2 f + \frac{d-1}{r}\partial_r f + \frac{1}{r^2}\Delta_{\mathbb{S}^{d-1}}f.
	\end{equation}
	We write the spherical Laplacian in terms of $\nabla_\omega$ and $\Pi[\omega]$. 
	\begin{lemma}
		\label{lem:LB}
		Under the spherical coordinates $x = r\omega$, and for smooth functions $\phi = \phi(x) = \phi(r,\omega)$, the spherical Laplacian of $\phi$ reads
		\[
		\Delta_{\mathbb{S}^{d-1}}\phi = \nabla_\omega \cdot \left(
		\Pi[\omega]\nabla_\omega \phi
		\right),
		\]
		where $\nabla_\omega$ is the differential operator applied to the zero-homogeneous extension of functions on the sphere;
		\[
		\nabla_\omega \phi(\omega) = \nabla_x \phi \left(
		\frac{x}{|x|}
		\right).
		\]
		$\nabla_\omega\cdot$ is the adjoint of $\nabla_\omega$.
	\end{lemma}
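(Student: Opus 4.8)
The plan is to reduce the claimed pointwise identity to a weak (tested-against-smooth-functions) statement and then obtain it from the ambient Green's identity on a spherical annulus, exploiting the radial rigidity of zero-homogeneous functions together with the definition of $\Delta_{\mathbb{S}^{d-1}}$ recorded in \eqref{eq:sphereLaplacian}.

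First I would introduce the zero-homogeneous extension $\tilde\phi(x):=\phi(x/|x|)$, and likewise $\tilde\psi$ for an arbitrary smooth test function $\psi$ on $\mathbb{S}^{d-1}$. A one-line chain-rule computation using $\partial_{x_i}(x_j/|x|)=|x|^{-1}\Pi[\omega]_{ij}$ with $\omega=x/|x|$ gives $\nabla_x\tilde\phi(x)=|x|^{-1}\Pi[\omega]\nabla\phi(\omega)=|x|^{-1}\nabla_\omega\phi(\omega)$; in particular $\nabla_\omega\phi$ is independent of how $\phi$ is extended off the sphere, it is tangential ($\omega\cdot\nabla_\omega\phi=0$ by Euler's relation), and hence $\Pi[\omega]\nabla_\omega\phi=\nabla_\omega\phi$. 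Since $\tilde\phi$ depends on $\omega$ only, one has $\partial_r\tilde\phi=\partial_r^2\tilde\phi=0$ and $\Delta_{\mathbb{S}^{d-1}}\tilde\phi=\Delta_{\mathbb{S}^{d-1}}\phi$ (the operator sees only values on the sphere), so \eqref{eq:sphereLaplacian} collapses to $\Delta\tilde\phi=|x|^{-2}\Delta_{\mathbb{S}^{d-1}}\phi$.

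Next I would integrate $\tilde\psi\,\Delta\tilde\phi$ over the annulus $A=\{1\le|x|\le 2\}$. Green's first identity converts this into $-\int_A\nabla\tilde\psi\cdot\nabla\tilde\phi\,dx$ plus boundary terms over $\{|x|=1\}\cup\{|x|=2\}$, and those boundary terms vanish because $\partial_\nu\tilde\phi=\pm\,\omega\cdot\nabla\tilde\phi=\pm|x|^{-1}\,\omega\cdot\nabla_\omega\phi=0$. Passing to polar coordinates $x=r\omega$, $dx=r^{d-1}dr\,d\omega$, and substituting $\nabla\tilde\phi=r^{-1}\nabla_\omega\phi$, $\nabla\tilde\psi=r^{-1}\nabla_\omega\psi$, $\Delta\tilde\phi=r^{-2}\Delta_{\mathbb{S}^{d-1}}\phi$, the common radial factor $\int_1^2 r^{d-3}\,dr$ pulls out of both sides and cancels, leaving $\int_{\mathbb{S}^{d-1}}\psi\,\Delta_{\mathbb{S}^{d-1}}\phi\,d\omega=-\int_{\mathbb{S}^{d-1}}\nabla_\omega\psi\cdot\nabla_\omega\phi\,d\omega$. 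Writing $\nabla_\omega\phi=\Pi[\omega]\nabla_\omega\phi$ and invoking the definition of $\nabla_\omega\cdot$ as the adjoint of $\nabla_\omega$ on $L^2(\mathbb{S}^{d-1})$, the right-hand side equals $\int_{\mathbb{S}^{d-1}}\psi\,\nabla_\omega\cdot(\Pi[\omega]\nabla_\omega\phi)\,d\omega$. Since $\psi$ is arbitrary and everything in sight is smooth, the pointwise identity $\Delta_{\mathbb{S}^{d-1}}\phi=\nabla_\omega\cdot(\Pi[\omega]\nabla_\omega\phi)$ follows.

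There is no genuine obstacle here; the computation becomes routine once the zero-homogeneous-extension viewpoint is adopted. The only point demanding a little care is the bookkeeping: making sure $\nabla_\omega$ is unambiguously the ambient gradient of the homogeneous extension (so that $\Pi[\omega]\nabla_\omega\phi=\nabla_\omega\phi$ and the intrinsic surface gradient is recovered), that the annulus boundary terms genuinely drop out, and that the radial integral is the same finite positive constant on both sides, $\int_1^2 r^{d-3}\,dr>0$ for every $d\ge 1$, so that it may be cancelled. A purely pointwise alternative — computing $\Delta\tilde\phi$ directly as the trace of $\Pi[\omega]\,D(\nabla_\omega\phi)$ — is also available, but the weak formulation above is cleaner given that the paper defines $\nabla_\omega\cdot$ only through the adjoint relation.
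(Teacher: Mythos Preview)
Your proof is correct but takes a genuinely different route from the paper's. The paper proceeds by a direct pointwise index computation: it writes $[\nabla\phi]^i=\omega^i\partial_r\phi+\tfrac{1}{r}\Pi^{ij}[\omega]\partial_{\omega^j}\phi$ via the chain rule, then expands $\Delta\phi=\partial^i\partial^i\phi$ term by term, collects and cancels the mixed radial--angular pieces using $\omega\perp\Pi[\omega]$, and finally identifies the surviving angular block with $\nabla_\omega\cdot(\Pi[\omega]\nabla_\omega\phi)$ by separately expanding the latter as $\partial_{\omega^k}(\Pi^{jk}[\omega]\partial_{\omega^j}\phi)=-(d-1)\omega^j\partial_{\omega^j}\phi+\Pi^{jk}[\omega]\partial_{\omega^k}\partial_{\omega^j}\phi$. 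Your argument instead passes through a weak formulation: Green's identity on an annulus applied to zero-homogeneous extensions kills the radial derivatives and the boundary flux, the common radial weight $\int_1^2 r^{d-3}\,dr$ factors out, and the adjoint characterisation of $\nabla_\omega\cdot$ stated in the lemma finishes the job. What the paper's approach buys is an explicit coordinate formula for $\Delta_{\mathbb{S}^{d-1}}\phi$ in terms of $\partial_{\omega^j}$, which is what feeds directly into the companion Corollary~\ref{cor:deromega}; your approach is cleaner and coordinate-free but treats $\nabla_\omega\cdot$ purely as a black-box adjoint, so if one later needs the explicit two-term expansion one would have to supplement it with a short computation.
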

	\begin{proof}
		We recompute $\Delta \phi$ using spherical coordinates. Then we identify the corresponding $\Delta_{\mathbb{S}^{d-1}}\phi$ term in~\eqref{eq:sphereLaplacian}. For every index $i$, the chain rule gives
		\begin{align*}
			[\nabla \phi]^i = \partial^i \phi = \frac{\partial r}{\partial x^i}\partial_r \phi + \frac{\partial \omega^j}{\partial x^i}\partial_{\omega^j}\phi = \omega^i \partial_r \phi + \frac{1}{r}\Pi^{ij}[\omega]\partial_{\omega^j}\phi.
		\end{align*}
		Here, we have recalled the simple computations
		$
		\frac{\partial r}{\partial x^i} = \omega^i$ and $ \frac{\partial \omega^j}{\partial x^i} = \frac{1}{r}\Pi^{ij}[\omega].
		$
		Writing the Laplacian with repeated indices, we further compute
		\begin{align*}
			\Delta \phi &= \partial^i \partial^i \phi = \frac{\partial r}{\partial x^i} \partial_r \left(
			\omega^i \partial_r \phi + \frac{1}{r}\Pi^{ij}[\omega]\partial_{\omega^j}\phi
			\right) + \frac{\partial \omega^k}{\partial x^i} \partial_{\omega^k}\left(
			\omega^i \partial_r \phi + \frac{1}{r}\Pi^{ij}[\omega]\partial_{\omega^j}\phi
			\right)     \\
			&= \omega^i \left(\omega^i \partial_r^2 \phi + \underbrace{\Pi^{ij}[\omega]\partial_r \left(
				\frac{1}{r}\partial_{\omega^j}\phi
				\right)}_{\perp \omega^i}\right) + \frac{1}{r}\Pi^{ik}[\omega]\left(
			\delta^{ik}\partial_r\phi + \underbrace{\omega^i \partial_{\omega^j}\partial_r \phi}_{\perp \Pi^{ik}[\omega]} + \frac{1}{r}\partial_{\omega^k}\left(
			\Pi^{ij}[\omega]\partial_{\omega^j}\phi
			\right)
			\right).
		\end{align*}
		Here, we have expanded the derivatives using the previous computations. In particular, the underbraced terms contribute nothing (as expected since these are the mixed derivatives in the radial and spherical directions). Recall now that
		\[
		\omega^i\omega^i = 1, \quad \Pi^{ik}[\omega]\delta^{ik} = \text{trace} \left(\Pi[\omega]\right) = d-1, \quad \partial_{\omega^k} \Pi^{ij}[\omega] = - (\delta^{ik}\omega^j + \delta^{jk}\omega^i).
		\]
		Using these identities, we further simplify
		\begin{align*}
			&\quad \Delta \phi = \partial_r^2 \phi + \frac{(d-1)}{r}\partial_r \phi + \frac{1}{r^2}\Pi^{ik}[\omega] \left(
			-(\delta^{ik}\omega^j + \underbrace{\delta^{jk}\omega^i}_{\perp \Pi^{ik}[\omega]}) \partial_{\omega^j}\phi + \Pi^{ij}[\omega]\partial_{\omega^k}\partial_{\omega^j}\phi
			\right) \\
			&= \partial_r^2 \phi + \frac{(d-1)}{r}\partial_r \phi + \frac{1}{r^2}\left(
			-(d-1)\omega^j \partial_{\omega^j}\phi + \Pi^{jk}[\omega]\partial_{\omega^k}\partial_{\omega^j}\phi
			\right).
		\end{align*}
		We can repackage the spherical Laplacian term in another neat way by noticing that
		\[
		\nabla_\omega \cdot (\Pi[\omega] \nabla_\omega \phi) = \partial_{\omega^k} \left(
		\Pi^{jk}[\omega]\partial_{\omega^j}\phi
		\right) = -(d-1)\omega^j \partial_{\omega^j}\phi + \Pi^{jk}[\omega]\partial_{\omega^k}\partial_{\omega^j}\phi.
		\]
		Putting this back gives
		\[
		\Delta \phi = \partial_r^2 \phi + \frac{(d-1)}{r}\partial_r \phi + \frac{1}{r^2} \nabla_\omega\cdot (\Pi[\omega]\nabla_\omega \phi).
		\]
	\end{proof}
	\begin{corollary}
		\label{cor:deromega}
		Under the same notations as~\Cref{lem:LB}, in particular $x = r\omega$, for any smooth vector field $V$, we have
		\[
		\nabla_x\cdot (\Pi[x]V) = \frac{1}{r}\nabla_\omega\cdot (\Pi[\omega] V).
		\]
		Moreover, for smooth $\phi$, we have
		\[
		\nabla_x\cdot (\Pi[x]\nabla_x \phi) = \frac{1}{r^2}\nabla_\omega\cdot (\Pi[\omega] \nabla_\omega \phi).
		\]
	\end{corollary}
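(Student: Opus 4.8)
The plan is to run the same chain-rule bookkeeping in spherical coordinates already carried out in the proof of~\Cref{lem:LB}. Two structural facts do all the work: the projection $\Pi[x]=I-\omega\otimes\omega$ depends only on the angular variable $\omega=x/|x|$ (in particular it is $0$-homogeneous in $x$, so $\partial_r\Pi[\omega]=0$), and $\Pi[\omega]\omega=0$.

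First I would recall from the proof of~\Cref{lem:LB} the decomposition of the Cartesian derivative,
\[
\partial_{x^i}=\omega^i\,\partial_r+\frac1r\,\Pi^{ij}[\omega]\,\partial_{\omega^j},
\]
and apply it to $\nabla_x\cdot(\Pi[x]V)=\partial_{x^i}\!\big(\Pi^{ij}[\omega]V^j\big)$. The contribution coming from $\partial_r$ equals $\omega^i\Pi^{ij}[\omega]\,\partial_rV^j$ (using $\partial_r\Pi[\omega]=0$), and this vanishes because $\omega^i\Pi^{ij}[\omega]=(\Pi[\omega]\omega)^j=0$. What survives is $\frac1r\,\Pi^{ik}[\omega]\,\partial_{\omega^k}\!\big(\Pi^{ij}[\omega]V^j\big)=\frac1r\,\nabla_\omega\cdot(\Pi[\omega]V)$, which is the first identity — the only point to check being that this surviving expression is literally the operator $\nabla_\omega\cdot$ as normalized in~\Cref{lem:LB}, applied to the tangential field $\Pi[\omega]V$ on the sphere of radius $r$ with $r$ treated as an inert parameter.

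For the second identity I would start again from the chain rule of~\Cref{lem:LB}, which gives $\nabla_x\phi=\omega\,\partial_r\phi+\frac1r\,\Pi[\omega]\nabla_\omega\phi$; then, using $\Pi[\omega]\omega=0$ and the idempotency $\Pi[\omega]^2=\Pi[\omega]$,
\[
\Pi[x]\nabla_x\phi=\Pi[\omega]\Big(\omega\,\partial_r\phi+\tfrac1r\,\Pi[\omega]\nabla_\omega\phi\Big)=\frac1r\,\Pi[\omega]\nabla_\omega\phi.
\]
Feeding this into the first identity and pulling the ($\omega$-independent) factor $1/r$ out of $\nabla_\omega\cdot$ yields $\nabla_x\cdot(\Pi[x]\nabla_x\phi)=\frac1{r^2}\,\nabla_\omega\cdot(\Pi[\omega]\nabla_\omega\phi)$. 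As a cross-check — and an alternative route that bypasses the first identity — one can instead write $\Pi[x]\nabla_x\phi=\nabla_x\phi-\omega\,\partial_r\phi$, compute $\nabla_x\cdot(\omega\,\partial_r\phi)=\partial_r^2\phi+\frac{d-1}{r}\partial_r\phi$ directly, and conclude $\nabla_x\cdot(\Pi[x]\nabla_x\phi)=\Delta\phi-\partial_r^2\phi-\frac{d-1}{r}\partial_r\phi=\frac1{r^2}\Delta_{\mathbb S^{d-1}}\phi$ via~\eqref{eq:sphereLaplacian}, which equals $\frac1{r^2}\nabla_\omega\cdot(\Pi[\omega]\nabla_\omega\phi)$ by~\Cref{lem:LB}.

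I do not anticipate a genuine obstacle: this is a minor variant of the computation proving~\Cref{lem:LB}. The only things needing care are not misplacing powers of $r$ and keeping track of which derivative acts with $r$ frozen; concretely, the step most worth double-checking is that the tangential divergence surviving at the end of the first part coincides with the operator $\nabla_\omega\cdot$ as normalized in~\Cref{lem:LB} (rather than some other surface-divergence convention), which is precisely where the symmetry of $\Pi$ together with $\Pi[\omega]\omega=0$ and $\Pi[\omega]^2=\Pi[\omega]$ are used.
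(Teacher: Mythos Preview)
Your proposal is correct and follows essentially the same approach as the paper, which merely says ``direct computation'' for the first identity and ``repeat the calculations in~\Cref{lem:LB} noticing that $\Pi[x]$ applied to $\nabla_x\phi$ removes the radial derivative contribution'' for the second. Your write-up simply fills in the details the paper omits, including the check that the surviving tangential term $\Pi^{ik}[\omega]\partial_{\omega^k}(\Pi^{ij}[\omega]V^j)$ agrees with $\partial_{\omega^i}(\Pi^{ij}[\omega]V^j)$ via $\omega^i\Pi^{ij}[\omega]=0$.
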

	\begin{proof}
		The first identity is a direct computation. For the second identity, repeat the calculations in ~\Cref{lem:LB} noticing that $\Pi[x]$ applied to $\nabla_x \phi$ removes the radial derivative contribution.
	\end{proof}
	\section{Strong compactness from bounded Boltzmann dissipation}
	\label{sec:strcpct}
	The purpose of this section is to derive an estimate guaranteeing strong compactness in the grazing collision limit $\sqrt{f^\epsilon} \to \sqrt{f}$ in $L_{loc}^2$. We repeat here the main results of~\cite{ADVW00, AV02} which we emphasize are independent of the grazing collision parameter $\epsilon \downarrow 0$ provided the finite angular momentum transfer~\eqref{eq:betaint} and uniform moments and entropy bounds hold~\eqref{eq:bounds}. Let $f_R$ denote $f\chi_R$ where $\chi_R$ is a smooth cut-off function equal to 1 in $B_R$ and vanishing outside of $B_{R+1}$ (we will make this precise later). The estimate we wish to show is
	\begin{equation}
		\label{eq:strcompact}
		\int_{\R^3} \left|\mathcal{F}\left[\sqrt{f_R^\epsilon}\right](\xi)\right|^2 \min(|\xi|^2, \,|\xi|^\nu) d\xi \le C_R(D_B^\epsilon(f^\epsilon) + 1), \quad \forall R>1,
	\end{equation}
	where $\mathcal{F}$ stands for the Fourier transform and the constant $C_R>0$ depending on $R>1$ is \textit{independent} of $\epsilon>0$ besides uniform bounds on the moments and entropy (pointwise in time versions of~\eqref{eq:bounds}). We recall from~\Cref{sec:sigmarep} that $\nu>0$ is the quantity which controls the angular singularity of the collision kernel. As in~\ref{ass:beta}, we insist on decoupling $\nu \in (0,2), \gamma \in [-4,0]$.
	
	As in~\cite{ADVW00}, we first outline the main steps and postpone the details. We will show~\eqref{eq:strcompact} for the particular Boltzmann collision kernel
	\[
	B^\epsilon(z,\sigma) = |z|_{kin}^\gamma b^\epsilon\left(
	\frac{z}{|z|}\cdot \sigma
	\right), \quad |z|_{kin}^\gamma = \left\{
	\begin{array}{cc}
		1     &|z| \le 1  \\
		|z|^\gamma     &|z| \ge 1 
	\end{array}
	\right. \le 1.
	\]
	The dissipation associated to this kernel is certainly less than the dissipation for those kernels without cutting off in the kinetic singularity near $v = v_*$ (such as those we consider from~\ref{ass:beta}).
	\begin{proof}[Proof of~\eqref{eq:strcompact}]
		For ease of notation, we identify $f \equiv f^\epsilon$. Proving~\eqref{eq:strcompact} in this setting of cut-off kinetic singularities clearly implies the full generality of the result since $|z|_{kin}^\gamma \le |z|^\gamma$. Cutting off the angular singularity part of $B^\epsilon$ if necessary, and then passing to the limit, we can rewrite the Boltzmann dissipation using the pre-post-collisional change of velocities
		\begin{align*}
			D_B^\epsilon(f) &= - \iiRs \iS B^\epsilon(f'f_*' - ff_*) \log f d\sigma dv_* dv = \iiRs\iS B^\epsilon ff_* \log \frac{f}{f'}d\sigma dv_* dv \\
			&= \iiRs \iS B^\epsilon f_* \left(
			f\log \frac{f}{f'} - f + f'
			\right)d\sigma dv_* dv  + \iiRs \iS B^\epsilon f_* (f - f') d\sigma dv_* dv.
		\end{align*}
		According to the cancellation lemma (\Cref{lem:cancel}), we can estimate the second integral with
		\[
		\left|\iiRs \iS B^\epsilon f_* (f'-f) d\sigma dv_* dv\right| \le C_1,
		\]
		with $C_1$ being a constant depending only on the moments and entropy. For the first integral, we make the square root appear with the classical inequality
		\[
		x\log \frac{x}{y} - x + y \ge (\sqrt{x} - \sqrt{y})^2, \quad \forall x,y >0,
		\]
		which can be proven by reducing to the case $y=1$ and applying the ALG inequality (\Cref{lem:ALG}). Continuing, we have
		\begin{align*}
			D_B^\epsilon(f) + C_1 &\ge \iiRs \iS B^\epsilon f_* (\sqrt{f'} - \sqrt{f})^2 d\sigma dv_* dv = \iiRs \iS |v-v_*|_{kin}^\gamma b^\epsilon(k\cdot \sigma) f_* (\sqrt{f'}-\sqrt{f})^2 d\sigma dv_* dv,
		\end{align*}
		and we also recall $k = (v-v_*)/|v-v_*|$. We set now $F(v) = \sqrt{f(v)}$ and use $F_*, \,F', \,F_*'$ as usual. Having revealed $\sqrt{f}$, we apply a smooth cut-off and pass to Fourier space. For $R>1$ we take $\chi_R \in C_c^\infty(\R^3)$ a smooth indicator function on $B_R$ such that
		$
		0 \le \chi_R \le 1$, $\chi_R|_{B_R} = 1$, and $\text{supp}\chi_R \subset B_{R+1}.$
		According to the truncation lemma (\Cref{lem:truncation}), there are constants $C_2, \, C_3>0$ such that $C_2$ depends only on~\ref{ass:bddmom} while $C_3$ depends only on $R$ and $\gamma$ such that 
		\begin{align*}
			\iiRs \iS B^\epsilon f_* (\sqrt{f'} - \sqrt{f})^2 d\sigma dv_* dv + C_2\ge C_3\iiRs \iS b^\epsilon (k\cdot \sigma) f_* \chi_{R*} (F'\chi_R' - F \chi_R)^2d\sigma dv_* dv.
		\end{align*}
		Using~\Cref{lem:fourier}, we are able to pass to Fourier variables so that the last integral can be minorized by
		\begin{align*}
			\iiRs \iS b^\epsilon (k\cdot \sigma) f_* \chi_{R*}& (F'\chi_R' - F \chi_R)^2d\sigma dv_* dv \\
			&\ge \frac{1}{2(2\pi)^3}\int_{\R^3} |\mathcal{F} [F\chi_R](\xi)|^2 \left\{
			\iS b^\epsilon\left(
			\frac{\xi}{|\xi|}\cdot \sigma
			\right) (\mathcal{F}[f\chi_R](0) - |\mathcal{F}[f\chi_R](\xi^-)|) d\sigma
			\right\}d\xi,
		\end{align*}
		where $\xi^- = ( \xi - |\xi|\sigma)/2$. Finally, the integral in curly brackets can be estimated using~\Cref{lem:avgfourier} so that there is a constant $C_4>0$ depending on the uniform bounds of moments and entropy and finite angular momentum transfer~\eqref{eq:betaint} giving
		\[
		\iS b^\epsilon\left(
		\frac{\xi}{|\xi|}\cdot \sigma
		\right) (\mathcal{F}[f\chi_R](0) - |\mathcal{F}[f\chi_R](\xi^-)|) d\sigma \ge C_4 \min(|\xi|^2,|\xi|^\nu).
		\]
		Putting these considerations together, we have
		\begin{align*}
			\frac{C_3 C_4}{2(2\pi)^3} \int_{\R^3}|\mathcal{F} [F\chi_R](\xi)|^2 \min(|\xi|^2,|\xi|^\nu) d\xi \le D_B^\epsilon(f) + C_1 + C_2.
		\end{align*}
	\end{proof}
	The rest of this section is devoted to (re)proving the lemmas that were invoked in the previous proof. In particular, we repeat the proofs involving estimates pertaining to the collision kernel $B^\epsilon$ since we want to make certain that our constants are independent of $\epsilon>0$ besides uniformly bounded moments and entropy.
	\begin{lemma}[Cancellation lemma]
		\label{lem:cancel}
		For almost every $v_* \in \R^3$ and $\epsilon>0$ sufficiently small, we have
		\[
		\int_{\R^3}\iS B^\epsilon(v-v_*,\sigma) (f'-f) d\sigma dv = [f*S^\epsilon](v_*),
		\]
		where $S^\epsilon$ is given by
		\[
		S^\epsilon(z) = 2\pi |z|_{kin}^\gamma \int_0^{\epsilon/2} \left[\cos^{-3}\left(\frac{\theta}{2}\right) - 1\right]\beta^\epsilon(\theta)d\theta.
		\]
		Moreover, we have the trivial estimate
		$
		|S^\epsilon(z)| \le 12.
		$
		Finally, the previous estimates lead to
		\[
		\left|
		\iiRs \iS B^\epsilon f_* (f'-f)
		\right| = \left|
		\iiRs f(v) f(v_*) S^\epsilon(v-v_*)
		\right| \le 12.
		\]
	\end{lemma}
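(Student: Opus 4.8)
The statement is the \emph{cancellation lemma} of Alexandre--Desvillettes--Villani--Wennberg~\cite{ADVW00} adapted to the grazing rescaling, and the plan is to reproduce its proof while tracking the $\epsilon$-dependence. Fix $v_*\in\R^3$ and split the inner integral as $\int_{\R^3}\iS B^\epsilon(v-v_*,\sigma)(f'-f)\,d\sigma\,dv = G(v_*)-L(v_*)$, where $G$ carries $f'=f(v')$ and $L$ carries $f(v)$. The ``loss'' term is immediate: using $\iS g(k\cdot\sigma)\,d\sigma = 2\pi\int_0^{\pi}g(\cos\theta)\sin\theta\,d\theta$, the factorization $b^\epsilon(\cos\theta)\sin\theta = \beta^\epsilon(\theta)$, and the support of $\beta^\epsilon$ in $(0,\epsilon/2)$, one gets $L(v_*)=\int_{\R^3}f(v)\,2\pi|v-v_*|_{kin}^{\gamma}\big(\int_0^{\epsilon/2}\beta^\epsilon(\theta)\,d\theta\big)\,dv$, which is exactly the ``$-1$'' contribution to $f*S^\epsilon$.

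For the ``gain'' term $G$ I would perform the change of variables $v\mapsto v'$ at fixed $v_*$ and $\sigma$. From $v'=\tfrac12(v+v_*)+\tfrac12|v-v_*|\sigma$ one computes $\partial v'/\partial v=\tfrac12(I+\sigma\otimes k)$ with $k=(v-v_*)/|v-v_*|$, so the Jacobian determinant is $\tfrac18(1+k\cdot\sigma)=\tfrac14\cos^2(\theta/2)$. The two geometric identities that make the bookkeeping collapse are $|v'-v_*|=|v-v_*|\cos(\theta/2)$ and: the angle between $(v'-v_*)/|v'-v_*|$ and $\sigma$ equals $\theta/2$. After substituting, renaming $v'$ back to $v$, passing to spherical coordinates around the new relative direction and applying the halving substitution $\theta\mapsto\theta/2$, the $\sigma$-integral becomes
\[
2\pi\int_0^{\epsilon/2}\Big(\tfrac{|v-v_*|}{\cos(\theta/2)}\Big)_{kin}^{\gamma}\,\frac{\beta^\epsilon(\theta)}{\cos^{3}(\theta/2)}\,d\theta,
\]
the single power $\cos^{-3}(\theta/2)$ being the product of the Jacobian factor $4/\cos^2(\theta/2)$ and $\sin(\theta/2)/\sin\theta=1/(2\cos(\theta/2))$. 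Here I would also note that for $\epsilon$ small the support $\theta\le\epsilon/2$ forces $\sigma$ so close to $k$ that $v\mapsto v'$ is a genuine diffeomorphism onto the relevant region, so no boundary terms appear (this is where ``$\epsilon$ sufficiently small'' enters). Subtracting $L$ and using the elementary monotonicity bounds for $|\cdot|_{kin}^{\gamma}$ to pass from $\big(|v-v_*|/\cos(\theta/2)\big)_{kin}^{\gamma}$ to $|v-v_*|_{kin}^{\gamma}$ (this costs only constants, and is where $\gamma\in[-4,0]$ is used) yields $G(v_*)-L(v_*)=\int_{\R^3}f(v)S^\epsilon(v-v_*)\,dv$ with $S^\epsilon$ as stated.

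The two quantitative bounds are then routine. Since $|z|_{kin}^{\gamma}\le 1$ and, for $\theta\in[0,\epsilon/2]$ with $\epsilon$ small, a Taylor expansion gives $0\le\cos^{-3}(\theta/2)-1\le\theta^2$ (the leading coefficient being $\tfrac38$), one has
\[
|S^\epsilon(z)|\le 2\pi\int_0^{\epsilon/2}\big(\cos^{-3}(\theta/2)-1\big)\beta^\epsilon(\theta)\,d\theta\le 2\pi\int_0^{\epsilon/2}\theta^2\beta^\epsilon(\theta)\,d\theta=2\pi\cdot\tfrac{8}{\pi},
\]
which is a fixed constant; keeping the true constant $\tfrac38$ and $|z|_{kin}^{\gamma}\le1$ brings this below $12$. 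The essential point is that the finite angular momentum transfer~\eqref{eq:betaint} converts the non-integrable $\beta^\epsilon$ into the $\epsilon$-independent finite moment $\int_0^{\epsilon/2}\theta^2\beta^\epsilon=\tfrac{8}{\pi}$. The final display is then immediate: $\big|\iiRs\iS B^\epsilon f_*(f'-f)\big|=\big|\iiRs f(v)f(v_*)S^\epsilon(v-v_*)\big|\le 12\iiRs f(v)f(v_*)=12$, using that $f$ is a probability density of mass one.

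The main obstacle is the change-of-variables step in the gain term: carefully establishing injectivity of $v\mapsto v'$ on the relevant set, computing the transformed deviation angle ($\theta/2$) and relative speed ($|v-v_*|\cos(\theta/2)$), and reconciling the transformed kinetic weight $\big(|v-v_*|/\cos(\theta/2)\big)_{kin}^{\gamma}$ with the clean weight $|v-v_*|_{kin}^{\gamma}$ appearing in the statement. Everything after the formula for $S^\epsilon$ is elementary once \eqref{eq:betaint} is invoked.
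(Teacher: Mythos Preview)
Your approach is correct and essentially the same as the paper's: both follow the Alexandre--Desvillettes--Villani--Wennberg cancellation lemma, splitting into gain/loss, performing the change of variables $v\mapsto v'$ in the gain term with the Jacobian $\tfrac14\cos^2(\theta/2)$ and the half-angle identities, and then bounding $\cos^{-3}(\theta/2)-1$ by a multiple of $\theta^2$ to invoke the finite angular momentum transfer~\eqref{eq:betaint}. The paper derives the quantitative bound $\cos^{-3}(\theta/2)-1\le\tfrac34\theta^2$ via the fundamental theorem of calculus (rather than a Taylor estimate), which yields exactly $\tfrac{3\pi}{2}\cdot\tfrac{8}{\pi}=12$; your route gives the same conclusion once the remainder is controlled. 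You are in fact slightly more careful than the paper on one point: you flag that the transformed kinetic weight is $\big(|v-v_*|/\cos(\theta/2)\big)_{kin}^{\gamma}$ rather than $|v-v_*|_{kin}^{\gamma}$, whereas the paper's justification of this passage is cryptic---but since $|\cdot|_{kin}^{\gamma}\le 1$ and is nonincreasing for $\gamma\le 0$, the final bound is unaffected either way.
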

	\begin{proof}
		As in the beginning of the proof of~\eqref{eq:strcompact}, we split the `gain' and `loss' part of the integral by an approximation argument, cutting off the angular singularity as necessary. Focusing on the gain term, for fixed $\sigma \in \Stwo$ and $v_* \in \R^3$, we consider the change of coordinates $v\mapsto v'$.
		\begin{figure}[H]
			\centering
			\begin{tikzpicture}
				\draw (0,0) circle (4cm);
				\draw[thick,purple,->] (180:4.06cm) -- (180:2.06cm) node[pos=0.5, below] {$k$};
				\draw[red] (180:3.06cm) arc (0:15:1cm) node[pos=0.59,right] {$\theta/2$};
				\draw[Circle-Circle] (180:4.06cm) -- (0:4.06cm) node[pos=0,left] {$v_*$} coordinate (v) node[pos=1,right] {$v = \psi_\sigma(v')$} node[pos=0.5,above left] {$\frac{v+v_*}{2}$};
				\draw[black, ->] (180:4.06cm) -- (30:4.06cm);
				\draw[thick, magenta, ->, shift = {(-4.06cm,0)}, rotate = -15] (0,0) -- (30:2cm) node[pos=0.5, above] {$k'$};
				\draw[thick,purple,->] (180:4.06cm) -- (180:2.06cm) node[pos=0.5, below] {$k$};
				\draw[thick,purple,->] (0,0) -- (0:2cm) node[pos=0.5, below] {$k$};
				\draw[Circle-Circle] (210:4.06cm) -- (30:4.06cm) node[pos=0,left] {$v_*'$} coordinate (vp) node[pos=1,right] {$v'$};
				\draw[thick,blue,->] (0,0) -- (30:2cm) node[pos=0.5, above] {$\sigma$};
				\filldraw[black] (0,0) circle (1.5pt); 
				\draw[dashed, thin] (0,0) -- (15:4cm);
				\draw[->,Periwinkle] (4,0) -- (30:4) node[pos=0.6, left] {$\omega$};
				\draw[red] (0:1cm) arc (0:30:1cm) node[pos=0.4,left] {$\theta$};
				\begin{scope}[rotate around = {15:(0,0)}]
					\draw[thin,Dandelion] (3.5,0) -- (3.5,-0.36) -- (3.86,-0.36);
				\end{scope}
				\draw[thin,Dandelion, shift = {(-0.25cm,1.05cm)}, rotate = 15] (3.5,0) -- (3.5,-0.36) -- (3.86,-0.36);
			\end{tikzpicture}
			\caption{Geometry of elastic binary collisions with additional angles.}
			\label{fig:pic1cancel}
		\end{figure}
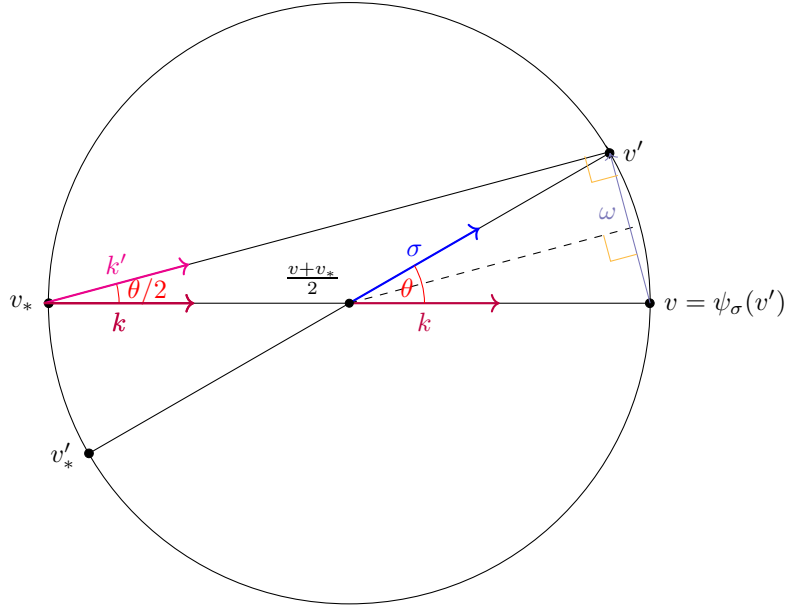
		Recalling
		\[
		v' = \frac{v+v_*}{2}+ \frac{|v-v_*|}{2}\sigma = v - \frac{|v-v_*|}{2}(\sigma - k) =  v_* +\frac{|v-v_*|}{2}(k+\sigma),
		\]
		the first of these identities implies the following equality for the Jacobian
		\[
		\left|
		\frac{\partial v'}{\partial v}
		\right| = \left|
		\frac{1}{2}I + \frac{1}{2}k \otimes \sigma
		\right| = \frac{1}{8}(1 + k\cdot \sigma).
		\]
		Graphically, see~\Cref{fig:pic1cancel}, we can switch from $k = \frac{v-v_*}{|v-v_*|}$ to $k' = \frac{v'-v_*}{|v'-v_*|}$ using the standard half-angle trigonometric identity
		$
		1+ k\cdot \sigma = 1 + \cos\theta = 2\cos^2\frac{\theta}{2} = 2 (k'\cdot \sigma)^2, 
		$
		where the last equality can be seen pictorially or by employing the same trigonometric identity from the definition of $k'$ using
		\[
		k'\cdot \sigma = \frac{1+k\cdot \sigma}{|k+\sigma|}, \quad |k + \sigma| = 2 \cos \frac{\theta}{2}.
		\]
		This leads to another form of the Jacobian determinant
		\[
		\left|
		\frac{\partial v'}{\partial v}
		\right| =\frac{1}{8}(1 + k\cdot \sigma) = \frac{(k'\cdot \sigma)^2}{4}.
		\]
		Now, since $\theta \in [0,\pi/2]$ (see~\Cref{sec:spherical}), we therefore have
		$
		k'\cdot \sigma = \cos \frac{\theta}{2} \ge \frac{1}{\sqrt{2}}.
		$
		This shows that the transformation is invertible and we define the inverse transfromation $v' \mapsto \psi_\sigma(v') = v$. Employing similar trigonometric identities as before, some computations lead to
		\[
		|v-v_*| = \frac{|v'-v_*|}{k'\cdot \sigma} \iff |\psi_\sigma(v) - v_*| = \frac{|v-v_*|}{k\cdot \sigma},
		\]
		since the collision map is involutive. Returning to the change of variable, we have
		\begin{align*}
			\int_{\R^3}\iS B^\epsilon(|v-v_*|,k\cdot\sigma) f(v') dv d\sigma &= \int_{\R^3}\iS B^\epsilon(|v-v_*|,k\cdot \sigma) f(v') \left|
			\frac{\partial v}{\partial v'}
			\right|dv' d\sigma \\
			&= \int_{\R^3}\iS B^\epsilon(|v - v_*|, 2(k'\cdot \sigma)^2 - 1) f(v') \frac{4}{(k'\cdot \sigma)^2}dv' d\sigma \\
			&= \int_{k\cdot \sigma \ge 1/\sqrt{2}} B^\epsilon(|\psi_\sigma(v) - v_*|, 2(k\cdot \sigma)^2 - 1) f(v) \frac{4}{(k\cdot \sigma)^2}dv d\sigma,
		\end{align*}
		where we just relabel $v\leftrightarrow v'$ in the last line. Inserting this back into the difference, we obtain
		\begin{align*}
			\int_{\R^3}\iS B^\epsilon(|v-v_*|,k\cdot \sigma) (f' - f)d\sigma dv 
			&= \int_{\R^3}f(v) \left[
			\int_{k\cdot \sigma \ge 1/\sqrt{2}} B^\epsilon\left(\frac{|v-v_*|}{k\cdot \sigma}, 2(k\cdot \sigma)^2 - 1\right) \frac{4}{(k\cdot \sigma)^2} d\sigma
			\right. \\
			&\quad \left.- \int_{k\cdot \sigma \ge 0}B^\epsilon(v-v_*,k\cdot \sigma) d\sigma\right]dv.
		\end{align*}
		Thus, we identify the term in square brackets as $S^\epsilon(|v-v_*|)$. Focusing again on the gain part, we change to spherical coordinates (see~\Cref{fig:spherical}), remembering now that $\cos \theta = k\cdot \sigma \ge 1/\sqrt{2}$, so $\theta \in [0,\pi/4]$ and therefore, we have
		\begin{align*}
			\int_{k\cdot \sigma \ge 1/\sqrt{2}} B^\epsilon\left(\frac{|v-v_*|}{k\cdot \sigma}, 2(k\cdot \sigma)^2 - 1\right) \frac{4}{(k\cdot \sigma)^2} d\sigma &= \iSk \int_0^{\pi/4}\frac{4 \sin\theta}{\cos^2\theta} B^\epsilon\left(
			\frac{|v-v_*|}{\cos\theta}, \cos 2\theta
			\right)d\theta dp \\
			&= 2\pi \int_0^{\pi/4} \frac{2 \sin 2\theta}{\cos^3\theta} B^\epsilon\left(
			\frac{|v-v_*|}{\cos\theta}, \cos 2\theta
			\right)d\theta \\
			&= 2\pi |v-v_*|_{kin}^\gamma \int_0^{\epsilon/2}\cos^{-3}\left(\frac{\theta}{2}
			\right)\beta^\epsilon(\theta)d\theta.
		\end{align*}
		In the last line, we doubled the integration region while also decomposing $B^\epsilon$ with respect to $\beta^\epsilon$ with $\epsilon>0$ sufficiently small so that
		$\theta \in [0,\epsilon/2] \implies \cos_{kin}^\gamma(\theta) = 1$.
		This completes the identification of $S^\epsilon$. 
		
		Turning to the $L^\infty$ bound, we note that the fundamental theorem of calculus gives the estimate
		\begin{align*}
			\cos^{-3}\frac{\theta}{2} - 1 &= \int_0^1 \frac{d}{dt} \cos^{-3}\frac{t\theta}{2} dt  = \frac{3}{2}\theta\int_0^1  \cos^{-4}\left(
			\frac{t\theta}{2}
			\right)\sin \left(
			\frac{t\theta}{2}
			\right)dt \le \frac{3}{2}\theta \sin \left(
			\frac{\theta}{2}
			\right) \sim \frac{3}{4}\theta^2.
		\end{align*}
		Thus, we obtain
		\begin{align*}
			|S^\epsilon(z)| \le \frac{3\pi}{2}\int_0^{\epsilon/2}\theta^2 \beta^\epsilon(\theta) d\theta \le 12.
		\end{align*}
		The final estimate of the lemma is now easy because
		\begin{align*}
			\left|
			\iiRs \iS B^\epsilon f_* (f'-f)d\sigma dv_* dv
			\right| &= \left|
			\iiRs f_* f S^\epsilon(v-v_*)dv dv_*
			\right| \le 12.
		\end{align*}
	\end{proof}
	\begin{lemma}[Truncation lemma]
		\label{lem:truncation}
		One can take the constant
		\[
		C_2 = 150 \pi \left(\iiRs (|v|^2 + |v_*|^2) f_* fdv_* dv\right) \left(\int_{\theta=0}^{\epsilon/2} \theta^2 \beta^\epsilon(\theta)d\theta\right) <+\infty
		\]
		such that for all $R>1$, we have
		\begin{align*}
			\iiRs \iS B^\epsilon f_* (F' - F)^2 d\sigma dv_* dv + C_2\ge \frac{(2\sqrt{2}(R+1))^\gamma}{2} \iiRs \iS b^\epsilon(k\cdot \sigma) f_* \chi_{R*}(F' \chi_R' - F\chi_R)^2 d\sigma dv_* dv,
		\end{align*}
		where we recall the notation $F = \sqrt{f}$ and $\chi_R\in C_c^\infty(\R^3)$ is a smooth indicator function such that
		\[
		0 \le \chi_R \le 1, \quad \chi_R|_{B_R} = 1, \quad \text{supp}\chi_R \subset B_{R+1}.
		\]
	\end{lemma}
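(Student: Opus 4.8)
\medskip
\noindent\textbf{Proof proposal.} The plan is to push the cutoff $\chi_R$ through the square using the elementary splitting $F'\chi_R' - F\chi_R = \chi_R'(F'-F) + F(\chi_R'-\chi_R)$, which gives
\[
(F'\chi_R' - F\chi_R)^2 \le 2\chi_R'^2(F'-F)^2 + 2F^2(\chi_R'-\chi_R)^2 ,
\]
and then to estimate the two resulting contributions separately: the first is absorbed into the left-hand side of the claimed inequality, while the second is absorbed into $C_2$. Throughout, $0\le\chi_{R*},\chi_R'\le 1$ is used freely to drop cutoff factors inside nonnegative integrands, and we write $F=\sqrt f$, $\chi_{R*}=\chi_R(v_*)$, $\chi_R'=\chi_R(v')$.

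For the first (``good'') contribution the key point is a geometric support bound. On $\mathrm{supp}(\chi_{R*}\chi_R')$ one has $|v_*|\le R+1$ and $|v'|\le R+1$, and since $v'-v_* = \tfrac12|v-v_*|(k+\sigma)$ with $|k+\sigma| = 2\cos(\theta/2)\ge\sqrt2$ for $\theta\in[0,\pi/2]$, it follows that
\[
|v-v_*| = \frac{|v'-v_*|}{\cos(\theta/2)} \le \sqrt2\,(|v'|+|v_*|) \le 2\sqrt2\,(R+1).
\]
Because $\gamma\le 0$ and $|z|_{kin}^\gamma = \min(1,|z|^\gamma)$, this forces $|v-v_*|_{kin}^\gamma \ge (2\sqrt2(R+1))^\gamma$ on that support, hence $b^\epsilon(k\cdot\sigma)\chi_{R*}\chi_R' \le (2\sqrt2(R+1))^{-\gamma} B^\epsilon(v-v_*,\sigma)\chi_{R*}\chi_R'$. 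Consequently the first contribution, once multiplied by the prefactor $\tfrac12(2\sqrt2(R+1))^\gamma$ and after bounding $\chi_R'^2\le\chi_R'$ and dropping $\chi_{R*}\chi_R'\le1$, is at most $\iiRs\iS B^\epsilon f_*(F'-F)^2$, i.e.\ exactly the term on the left.

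For the second (``error'') contribution we use that $\chi_R$ can be chosen with $\text{Lip}(\chi_R)$ bounded by a universal constant (the transition annulus $B_{R+1}\setminus B_R$ has fixed width $1$), so that $|\chi_R'-\chi_R|\le \text{Lip}(\chi_R)|v'-v| = \tfrac12\text{Lip}(\chi_R)|v-v_*|\,|k-\sigma|$ and $|k-\sigma|^2 = 2(1-\cos\theta)\le\theta^2$ by \eqref{eq:boundcos}. Passing to spherical coordinates and using $\sin\theta\,b^\epsilon(\cos\theta)=\beta^\epsilon(\theta)$ together with $\int_0^{2\pi}d\phi = 2\pi$,
\[
\iiRs\iS b^\epsilon f_* f(\chi_R'-\chi_R)^2 \le \frac{\text{Lip}(\chi_R)^2}{4}\,2\pi\Big(\int_0^{\epsilon/2}\theta^2\beta^\epsilon(\theta)\,d\theta\Big)\iiRs f_* f\,|v-v_*|^2 ,
\]
and then $|v-v_*|^2\le 2(|v|^2+|v_*|^2)$ together with $\tfrac12(2\sqrt2(R+1))^\gamma\le 1$ (valid since $R>1$, $\gamma\le0$) bounds the second contribution by a quantity of the form $C_2$; the numerical factor $150\pi$ accommodates $\text{Lip}(\chi_R)^2$, the factor $2$ from the splitting, the factor $2$ from $|v-v_*|^2\le 2(|v|^2+|v_*|^2)$, and the $2\pi$ from the azimuthal integration. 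Finiteness of $C_2$ is immediate from the uniform moment bound $\iiRs(|v|^2+|v_*|^2)f_*f<\infty$ and from $\int_0^{\epsilon/2}\theta^2\beta^\epsilon(\theta)\,d\theta = \tfrac{8}{\pi}$, the finite angular momentum transfer \eqref{eq:betaint}.

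The main obstacle is precisely the geometric support estimate $|v-v_*|\le 2\sqrt2(R+1)$ on $\mathrm{supp}(\chi_{R*}\chi_R')$: it is what allows one to trade the purely angular kernel $b^\epsilon$ on the right for the kinetic-cutoff kernel $B^\epsilon = |v-v_*|_{kin}^\gamma b^\epsilon$ on the left, and it relies crucially on having restricted $\theta\in[0,\pi/2]$ so that $\cos(\theta/2)\ge 1/\sqrt2$. A minor technical point, handled as at the start of Appendix~\ref{sec:strcpct} by truncating the angular singularity of $b^\epsilon$ and then passing to the limit, is that $b^\epsilon$ is not $\theta$-integrable, so all the integrals above are to be read in that approximating sense, each being finite thanks to the difference-quotient structure $(F'-F)^2$, respectively the $\theta^2$ weight produced by $|\chi_R'-\chi_R|^2$.
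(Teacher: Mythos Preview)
Your proof is correct, and it follows the same overall architecture as the paper's argument (split the square, control the ``good'' piece by the geometric support bound $|v-v_*|\le 2\sqrt2(R+1)$, and absorb the ``error'' piece into $C_2$ via the Lipschitz bound on $\chi_R$ and the finite angular momentum transfer). The one genuine difference is the choice of splitting: you write
\[
F'\chi_R' - F\chi_R = \chi_R'(F'-F) + F(\chi_R'-\chi_R),
\]
whereas the paper writes
\[
F'\chi_R' - F\chi_R = F'(\chi_R'-\chi_R) + (F'-F)\chi_R.
\]
The paper's choice places $F'^2=f'$ in the error term, which then forces a pre-post-collision change of variables $v\mapsto v'$ (with the attendant Jacobian factors $\tfrac{4}{(k'\cdot\sigma)^2}$ and the relation $|v-v_*|=|v'-v_*|/(k'\cdot\sigma)$) before one can integrate against $f_*f$. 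Your splitting places $F^2=f$ in the error term from the start, so the error estimate is a direct spherical-coordinate computation with no change of variables; this is strictly simpler and gives a much smaller numerical constant (of order $4\pi$ rather than $150\pi$). On the ``good'' side, your support argument uses $\mathrm{supp}(\chi_{R*}\chi_R')$, while the paper uses that $(F'\chi_R'-F\chi_R)^2$ is nonzero only if $|v|\le R+1$ or $|v'|\le R+1$; both lead to the same bound $|v-v_*|_{kin}^\gamma\ge(2\sqrt2(R+1))^\gamma$.
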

	\begin{proof}
		Firstly, it is clear that
		$
		f_* (F' - F)^2 \ge f_* \chi_{R*} (F' - F)^2 \chi_R^2.
		$
		We wish to pair the indicators with $F$ in the right velocity variables so we estimate
		\begin{align*}
			(F'\chi_R' - F\chi_R)^2 &= (F'(\chi_R' - \chi_R) + (F' - F)\chi_R)^2 \le 2F'^2 (\chi_R' - \chi_R)^2 + 2(F' - F)^2 \chi_R^2.
		\end{align*}
		Including $B^\epsilon$, we have
		\begin{align}
			\label{eq:truncmed}
			\begin{split}
				B^\epsilon(|v-v_*|,k \cdot \sigma)  (F' - F)^2
				&\ge |v-v_*|_{kin}^\gamma b^\epsilon(k\cdot \sigma)  \chi_{R*} (F' - F)^2 \chi_R^2 \\
				&\ge |v-v_*|_{kin}^\gamma b^\epsilon(k\cdot \sigma) \left[ 
				\frac{1}{2} \chi_{R*}(F'\chi_R' - F\chi_R)^2
				- 
				\chi_{R*} F'^2(\chi_R' - \chi_R)^2
				\right].
			\end{split}
		\end{align}
		Now, for the second term with the minus sign, we use similar Mean Value estimates as in~\Cref{lem:estbnv2} to deduce
		$
		(\chi_R' - \chi_R)^2 \le \text{Lip}(\chi_R)^2 |v-v_*|^2|\sigma - k|^2.
		$
		We choose $\chi_R$ in such a way that $\text{Lip}(\chi_R) \le 2$ (i.e. its height changes by 1 over a horizontal distance of 0.95). Before proceeding with the estimate of the second term, we write down the following relation which can be obtained as in the proof of the cancellation lemma (\Cref{lem:cancel})
		\begin{equation}
			\label{eq:kpk}
			|v' - v_*| = \frac{1}{2}|\sigma + k| |v-v_*|.
		\end{equation}
		Recalling $dv = \frac{4}{(k'\cdot \sigma)^2}dv'$ and similar pre-post-collision velocity relations from~\Cref{lem:cancel}, the integral of the second term of~\eqref{eq:truncmed} can be estimated by
		\begin{align*}
			&\quad \iiRs \iS |v-v_*|_{kin}^\gamma b^\epsilon(k\cdot \sigma) f_* \chi_{R*} f'(\chi_R' - \chi_R)^2 d\sigma dv_* dv \\
			&\le 16\iiRs \iS |v-v_*|_{kin}^\gamma|v-v_*|^2 b^\epsilon (2(k'\cdot \sigma)^2 - 1) f_* \chi_{R*}f' \frac{|\sigma - k|^2}{(k'\cdot \sigma)^2}dv'dv_* d\sigma.
		\end{align*}
		By expanding the square, one obtains
		\begin{equation}
			\label{eq:sigpmk}
			|\sigma + k|^2 =4(k'\cdot \sigma)^2, \quad |\sigma - k|^2 = 4(1 - (k'\cdot\sigma)^2),
		\end{equation}
		which allows the estimate to continue as
		\begin{align*}
			&\quad \iiRs \iS |v-v_*|_{kin}^\gamma b^\epsilon(k\cdot \sigma) f_* \chi_{R*} f'(\chi_R' - \chi_R)^2 d\sigma dv_* dv \\
			&\le 64 \iiRs \iS \left(
			\frac{|v'-v_*|}{(k'\cdot \sigma)}
			\right)_{kin}^\gamma |v'-v_*|^2 b^\epsilon(2(k'\cdot \sigma)^2 - 1) f_* \chi_{R*} f' \frac{1 - (k'\cdot \sigma)^2}{(k'\cdot \sigma)^4}dv' dv_* d\sigma.
		\end{align*}
		Relabelling $v'$ as $v$ and moving to polar coordinates, we finally have
		\begin{align*}
			&\quad \iiRs \iS |v-v_*|_{kin}^\gamma b^\epsilon(k\cdot \sigma) f_* \chi_{R*} f'(\chi_R' - \chi_R)^2 d\sigma dv_* dv \\
			&\le 64 \iiRs \iSk \int_{\theta = 0}^{\epsilon/2}\left(
			\frac{|v-v_*|}{\cos\theta}
			\right)_{kin}^\gamma |v-v_*|^2 \beta^\epsilon(2\theta) f_* \chi_{R*}f \cos^{-4}\theta (1 - \cos^2 \theta) d\theta dp dv_* dv \\
			&\le 150 \pi \left(\iiRs (|v|^2 + |v_*|^2) f_* fdv_* dv\right) \left(\int_{\theta=0}^{\epsilon/2} \theta^2 \beta^\epsilon(\theta)d\theta\right) =: C_2 < +\infty.
		\end{align*}
		In the last inequality, we bluntly estimated the negative powers of $\cos \theta \sim 1$ since $\theta \le \epsilon/2$.
		
		Turning to the first term of~\eqref{eq:truncmed}, we combine~\eqref{eq:kpk} and~\eqref{eq:sigpmk} together with the identification $k'\cdot \sigma = \cos \theta/2$ (see the proof of~\Cref{lem:cancel}) to deduce for $\theta \in [0,\pi/2]$
		\[
		|v'-v_*| = \cos\frac{\theta}{2}|v-v_*| \implies |v'-v_*| \le |v-v_*| \le \sqrt{2}|v'-v_*|.
		\]
		This implies that whenever $|v_*| \le R+1$ and at least one of $|v|\le R+1$ or $|v'| \le R+1$ hold, we immediately obtain $|v-v_*|^2 \le 8(R+1)^2$. In this case, we can estimate the kinetic contribution
		\[
		|v-v_*|_{kin}^\gamma \ge (2\sqrt{2}(R+1))^\gamma.
		\]
		Adding $C_2$ to both sides of~\eqref{eq:truncmed} and integrating, we obtain
		\begin{align*}
			2\iiRs \iS B^\epsilon f_* (F' - F)^2 + 2 C_2 \ge (2\sqrt{2}(R+1))^\gamma \iiRs \iS b^\epsilon(k\cdot \sigma) f_* \chi_{R*}(F'\chi_R' - F \chi_R)^2.
		\end{align*}
	\end{proof}
	\begin{lemma}[Fourier representation]
		\label{lem:fourier}
		For $f \in L^1(\R^3)$ and $f\ge 0$, we have
		\begin{align*}
			&\quad \iiRs \iS b^\epsilon(k\cdot \sigma) f_*(F' - F)^2 d\sigma dv_* dv \ge  \frac{1}{2(2\pi)^3}\int_{\R^3}|\mathcal{F}[F](\xi)|^2 \left\{
			\iS b^\epsilon\left(
			\frac{\xi}{|\xi|}\cdot \sigma
			\right)(\mathcal{F}[f](0) - |\mathcal{F}[f](\xi^-)|^2
			\right\}d\xi,
		\end{align*}
		with the notation $\xi^- = \frac{\xi - |\xi|\sigma}{2}$ recalling $F = \sqrt{f}$.
		
		Furthermore, there is a constant $C_f$ depending only on bounds for the entropy, mass, and energy of $f$ such that for every $\xi \in \R^3$ we have
		\[
		\mathcal{F}[f](0) - |\mathcal{F}[f](\xi)| \ge C_f \min (|\xi|^2,1).
		\]
	\end{lemma}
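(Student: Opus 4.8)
The statement has two independent parts: the Fourier representation (first display, which should be read as the inequality $\ge$, since it is applied as a minorisation) and the pointwise bound $\mathcal F[f](0)-|\mathcal F[f](\xi)|\ge C_f\min(|\xi|^2,1)$. For Part 1 I would first truncate the angular singularity: with $b^\epsilon_\eta:=b^\epsilon\mathbf 1_{\theta\ge\eta}$, both sides are non-negative and increase to the full ones as $\eta\downarrow0$ (the integrands in $(v,v_*,\sigma)$ and in $(\xi,\sigma)$ are pointwise non-negative and $b^\epsilon_\eta\uparrow b^\epsilon$), so by monotone convergence it suffices to prove the inequality with $b^\epsilon_\eta$, for each fixed $\eta>0$, where every integral converges absolutely for $f\in L^1_{\ge0}(\R^3)$, $F=\sqrt f\in L^2$. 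Expand $(F'-F)^2=(F')^2+F^2-2F'F$; using the $\sigma$-substitution $\iiRs\iS b^\epsilon_\eta\,G(v,v_*,v',v_*')=\iiRs\iS b^\epsilon_\eta\,G(v',v_*',v,v_*)$ (valid since $|v-v_*|$ and the collision angle are preserved and the Jacobian is $1$), the cross term rewrites as $\iiRs\iS b^\epsilon_\eta f_*F'F=\int Q_{b^\epsilon_\eta}(f,F)F\,dv+\iiRs\iS b^\epsilon_\eta f_*F^2$, with $Q_{b^\epsilon_\eta}$ the Maxwellian-type Boltzmann operator of kernel $b^\epsilon_\eta(k\cdot\sigma)$. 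Applying Bobylev's identity (\cite{B88}; cf.\ \cite{ADVW00}) — the step turning the velocity direction $k=\tfrac{v-v_*}{|v-v_*|}$ into the frequency direction $\tfrac{\xi}{|\xi|}$ and producing $\xi^\pm=\tfrac{\xi\pm|\xi|\sigma}{2}$ — together with Plancherel and the change of variables $\xi\mapsto\xi^+$ (quasi-invariant for $b^\epsilon_\eta(\hat\xi\cdot\sigma)\,d\sigma\,d\xi$, with the Jacobian computation of \Cref{lem:cancel}), one collects everything into an exact identity whose $(\xi,\sigma)$-integrand is $b^\epsilon_\eta(\hat\xi\cdot\sigma)$ times $(|\mathcal F[F](\xi^+)|^2+|\mathcal F[F](\xi)|^2)\mathcal F[f](0)-2\,\mathrm{Re}(\mathcal F[f](\xi^-)\mathcal F[F](\xi^+)\overline{\mathcal F[F](\xi)})$. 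Estimating the cross contribution by $2\,\mathrm{Re}(\mathcal F[f](\xi^-)\mathcal F[F](\xi^+)\overline{\mathcal F[F](\xi)})\le|\mathcal F[f](\xi^-)|\,(|\mathcal F[F](\xi^+)|^2+|\mathcal F[F](\xi)|^2)$ and then dropping the non-negative term $|\mathcal F[F](\xi^+)|^2(\mathcal F[f](0)-|\mathcal F[f](\xi^-)|)\ge0$ leaves exactly the claimed minorisation (in fact with a better constant); finally $\eta\downarrow0$.

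For Part 2, since $|\mathcal F[f](\xi)|\le\mathcal F[f](0)=\|f\|_1$, I would start from
\[
\mathcal F[f](0)-|\mathcal F[f](\xi)|\ \ge\ \frac{\mathcal F[f](0)^2-|\mathcal F[f](\xi)|^2}{2\|f\|_1}\ =\ \frac{1}{2\|f\|_1}\iint_{\R^6}f(v)f(w)\big(1-\cos((v-w)\cdot\xi)\big)\,dv\,dw,
\]
the last equality because the imaginary part of $\iint f(v)f(w)(1-e^{-i(v-w)\cdot\xi})$ cancels by $(v,w)$-symmetry. Fix $\rho\ge\pi$ so large (depending only on the uniform energy bound) that $\iint_{|v-w|\le\rho}f(v)f(w)\ge\tfrac12\|f\|_1^2$, possible since $|v-w|^2\le2|v|^2+2|w|^2$. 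For $|\xi|\le\pi/\rho$ one has $|(v-w)\cdot\xi|\le\pi$ on $\{|v-w|\le\rho\}$, so \eqref{eq:boundcos} gives
\[
\mathcal F[f](0)-|\mathcal F[f](\xi)|\ \ge\ \frac{1}{\pi^2\|f\|_1}\iint_{|v-w|\le\rho}f(v)f(w)\big((v-w)\cdot\xi\big)^2\ \ge\ \frac{c_0}{\pi^2\|f\|_1}|\xi|^2,
\]
with $c_0:=\inf_{e\in\Stwo}\iint_{|v-w|\le\rho}f(v)f(w)((v-w)\cdot e)^2$; this is strictly positive, for if $c_0=0$ then, for a minimising $e$ and a.e.\ $v\in\{f>0\}$, $\{f>0\}\cap B(v,\rho)$ lies in the hyperplane through $v$ orthogonal to $e$ (a null set), whence $\int_{B(v,\rho)}f=0$ for a.e.\ $v\in\{f>0\}$ and $\iint_{|v-w|\le\rho}f(v)f(w)=0$, contradicting the choice of $\rho$. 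For $|\xi|>\pi/\rho$, write $\mathcal F[f](0)-|\mathcal F[f](\xi)|=\int f(v)(1-\cos(v\cdot\xi+\phi_\xi))\,dv$ for a suitable phase $\phi_\xi$, restrict to a ball $B_{R_0}$ carrying $f$-mass $\ge m_0>0$ (from mass and energy), and note that $\{v\in B_{R_0}:1-\cos(v\cdot\xi+\phi_\xi)<\tau\}$ is covered by parallel slabs of total Lebesgue measure $\le C(R_0,\rho)\sqrt\tau$, uniformly in $|\xi|>\pi/\rho$ and $\phi_\xi$; the equi-integrability of $f$ from the entropy bound then makes the $f$-mass of that set $\le m_0/2$ once $\tau$ is small, so $\mathcal F[f](0)-|\mathcal F[f](\xi)|\ge\tau m_0/2$. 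Combining the two regimes gives the bound with $C_f=\min\{c_0/(\pi^2\|f\|_1),\tau m_0/2\}$, depending only on the mass, energy and entropy bounds; the uniformity of $c_0$ over the admissible class is obtained by weak $L^1$-compactness of a minimising sequence (equi-integrability from entropy, tightness from energy) together with the no-hyperplane-concentration argument above.

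The main obstacle is twofold. In Part 1 it is the Fourier bookkeeping: writing Bobylev's identity in exactly the right form and checking that the discarded term is genuinely non-negative, so that the clean right-hand side really is a lower bound and not merely an equality to a messier expression. In Part 2 it is the quantitative, class-uniform statement that a density with bounded entropy (and bounded mass and energy) cannot concentrate near any affine hyperplane — this is what forces $c_0>0$ and also drives the large-$|\xi|$ regime; the energy bound is essential precisely because a hyperplane slab has infinite Lebesgue measure, so equi-integrability alone would not suffice.
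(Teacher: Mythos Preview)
Your proposal is correct. For Part~1 you sketch the Bobylev--Plancherel argument of~\cite{ADVW00}, which is precisely what the paper cites without reproducing; your outline (cutoff $b^\epsilon_\eta$, expansion of $(F'-F)^2$, Bobylev's identity, the $\xi\mapsto\xi^+$ change of variables, then the elementary estimate $2\,\mathrm{Re}(ab\bar c)\le|a|(|b|^2+|c|^2)$ and dropping the non-negative $|\hat F(\xi^+)|^2(\hat f(0)-|\hat f(\xi^-)|)$) is the same route.

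For Part~2 the approaches genuinely diverge in the small-$|\xi|$ regime. The paper writes $\hat f(0)-|\hat f(\xi)|=\int f(v)\,(1-\cos(v\cdot\xi+\theta))\,dv$ for a phase $\theta$ and uses a \emph{single} slab/lattice argument for all $|\xi|$: restricting to $B_r$ and to the set $A_\delta=\{v:\ |v\cdot\xi+\theta-2\pi p|\ge 2\delta\ \forall p\in\mathbb Z\}$, estimating $|B_r\setminus A_\delta|\lesssim \delta\,r^{d-1}(1+r|\xi|)/|\xi|$, and then choosing $r$ from the energy bound and $\delta$ from the equi-integrability (entropy) bound, with $\delta\sim|\xi|$ for $|\xi|\le1$ so that $\sin^2\delta\gtrsim|\xi|^2$. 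The constants are \emph{automatically} uniform over the class because the only inputs are $\|f\|_{L^1}$, $\|f\|_{L^1_2}$, and the uniform modulus of equi-integrability. Your large-$|\xi|$ argument is essentially this same slab mechanism, but for small $|\xi|$ you instead double to $\tfrac{1}{2\|f\|_1}\iint f(v)f(w)(1-\cos((v-w)\cdot\xi))$ and Taylor-expand, reducing to the positivity of $c_0=\inf_{e\in\Stwo}\iint_{|v-w|\le\rho}f(v)f(w)((v-w)\cdot e)^2$. This is more transparent at the level of a fixed $f$ (the $|\xi|^2$ appears directly), but the class-uniformity of $c_0$ now needs an extra compactness step: weak $L^1$ compactness of a minimising sequence (Dunford--Pettis from entropy, tightness from energy), narrow convergence of the tensor products $f_n\otimes f_n\rightharpoonup \bar f\otimes\bar f$, and then the no-hyperplane-concentration contradiction for the limit $\bar f$. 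The paper's single mechanism buys you uniformity for free; your route is a little more conceptual (it isolates exactly the anisotropy obstruction) at the price of that compactness detour.
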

	For the first part of the lemma, we direct the reader to~\cite[Section 5, Corollary 3]{ADVW00}. We only show the second estimate of the result to verify that the constant $C_f$ can be taken independently of $\epsilon>0$. 
	\begin{proof}
		Recall that for real numbers $a,b$ one can take $\theta = \tan^{-1}(a/b) \in \R$ such that
		$
		\sqrt{a^2 + b^2} = a\cos \theta - b\sin \theta.
		$
		Applying this to the real and imaginary parts of the Fourier transform of $f$, there is some $\theta \in \R$ such that
		\begin{align*}
			\mathcal{F}[f](0) - |\mathcal{F}[f](\xi)| = \int_{\R^3} f(v) (1 - \cos (v\cdot \xi + \theta)) dv    
			= 2\int_{\R^3} f(v) \sin^2\left(
			\frac{v\cdot \xi + \theta}{2}
			\right)dv   \ge 2\sin^2 \delta \int_{B_r\cap A_\delta}\!\!\!\!\!f(v)dv.
		\end{align*}
		Here, $r>0$ is some (large) radius to be specified later. For $\delta>0$, we consider the set $
		A_\delta := \{
		v\in \R^3 \, : \, \forall p\in \mathbb{Z}, \, |v\cdot \xi + \theta - 2\pi p | \ge 2\delta
		\}$.
		The partition $\R^d = (\R^d \setminus B_r) \cup (B_r \cap A_\delta) \cup (B_r \cap (\R^d \setminus A_\delta))$ leads to the estimate
		\begin{align*}
			\sin^2 \delta \int_{B_r\cap A_\delta} f(v) dv &= \sin^2\delta \left(
			\int_{\R^d} - \int_{\R^d \cap B_r} - \int_{B_r \cap (\R^d \setminus A_\delta)}
			\right)f(v) dv \\
			&\ge \sin^2\delta \left(
			\|f\|_{L^1} - \frac{\|f\|_{L_2^1}}{r^2} - \int_{B_r \cap (\R^d \setminus A_\delta)} f(v) dv
			\right).
		\end{align*}
		We now further investigate the set
		$
		B_r \cap (\R^d \setminus A_\delta) = \{
		v\in\R^3 \, : \, |v| \le r, \, \exists p \in \mathbb{Z} \text{ s.t. } |v\cdot \xi + \theta - 2\pi p| \le 2\delta
		\}$.
		By considering (rotate and translate $v$ as appropriate) $\xi = k e_1, \,  \theta = 0$, with $k>0$ and $e_1 = (1,0,0)$, one can show
		\[
		|B_r \cap (\R^d \setminus A_\delta)| \le \frac{4\delta}{|\xi|}(2r)^{d-1}\left(
		3 + \frac{r|\xi|}{\pi}
		\right).
		\]
		More precisely, one should think of $B_r \cap (\R^d \setminus A_{\delta = 0})$ as the set of integer lattice points in $B_r$ lying along the axial direction of $\xi/|\xi|$. So the inequality above estimates the measure of a $\delta$ neighbourhood version of this set. Continuing the estimate, we thus obtain
		\begin{equation}
			\label{eq:fouriermed}
			\mathcal{F}[f](0) - |\mathcal{F}[f](\xi)| \ge 2\sin^2 \delta \left(
			\|f\|_{L^1} - \frac{\|f\|_{L_2^1}}{r^2} - \sup_{|A| \le \frac{4\delta}{|\xi|}(2r)^{d-1}\left(
				3 + \frac{r|\xi|}{\pi}
				\right)}\int_A f(v) dv
			\right).
		\end{equation}
		In the case $|\xi|\ge 1$, notice that
		\[
		\frac{4\delta}{|\xi|}(2r)^{d-1}\left(
		3 + \frac{r|\xi|}{\pi}
		\right) \le 12\delta (2r)^{d-1} + \frac{6 \delta}{\pi}(2r)^d.
		\]
		Therefore, choose large $r>0$ and small $\delta>0$ such that the bracketed quantity is strictly positive (appealing to equi-integrability of $f$).
		In the case $|\xi| \le 1$, one again chooses large $r>0$ but small $\delta \sim |\xi|$ so that
		$
		\sin^2\delta \ge |\xi|^2.
		$
	\end{proof}
	\begin{lemma}[Fourier average estimate]
		\label{lem:avgfourier}
		For every $\xi \in \R^3$ and $\epsilon \le 1$ we have
		\[
		\iS  b^\epsilon\left(
		\frac{\xi}{|\xi|}\cdot \sigma
		\right)\min(|\xi^-|^2,1)d\sigma  \ge \frac{2c_1}{\pi}\left(\int_0^{\pi/2} \phi^{1-\nu}d\phi\right) \min(|\xi|^2,|\xi|^\nu),
		\]
		recalling the notations $c_1,\nu$ from~\ref{ass:beta} and $\xi^- = \frac{\xi - |\xi|\sigma}{2}$.
	\end{lemma}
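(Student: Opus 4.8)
The plan is to reduce the spherical integral to a one-dimensional angular integral adapted to the fixed direction $\xi/|\xi|$, rescale the angle by the grazing parameter, and then split into two regimes according to the size of $|\xi|\epsilon$.

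First I would pass to spherical coordinates with polar axis $\xi/|\xi|$ and write $\cos\theta=\tfrac{\xi}{|\xi|}\cdot\sigma$. Since $b^\epsilon(\cos\theta)=\beta^\epsilon(\theta)/\sin\theta$ vanishes for $\theta>\epsilon/2$, and since $|\xi^-|^2=\tfrac14|\xi-|\xi|\sigma|^2=\tfrac{|\xi|^2}{2}(1-\cos\theta)$ by a direct expansion, the left-hand side equals
\[
\iS b^\epsilon\!\Big(\tfrac{\xi}{|\xi|}\cdot\sigma\Big)\min(|\xi^-|^2,1)\,d\sigma = 2\pi\int_0^{\epsilon/2}\beta^\epsilon(\theta)\,\min\!\Big(\tfrac{|\xi|^2}{2}(1-\cos\theta),1\Big)d\theta.
\]
Next I would rescale $\theta=\epsilon\chi/\pi$, turning $\beta^\epsilon(\theta)\,d\theta$ into $\tfrac{\pi^2}{\epsilon^2}\beta(\chi)\,d\chi$ over $\chi\in(0,\pi/2)$, and apply the two pointwise bounds already available: $1-\cos\theta\ge\tfrac{2}{\pi^2}\theta^2$ from~\eqref{eq:boundcos} and $\beta(\chi)\ge c_1\chi^{-1-\nu}$ from~\ref{ass:beta}. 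Writing $a:=|\xi|^2\epsilon^2/\pi^4$, monotonicity of $t\mapsto\min(t,1)$ then yields
\[
\iS b^\epsilon\!\Big(\tfrac{\xi}{|\xi|}\cdot\sigma\Big)\min(|\xi^-|^2,1)\,d\sigma \ge \frac{2\pi^3c_1}{\epsilon^2}\int_0^{\pi/2}\chi^{-1-\nu}\min(a\chi^2,1)\,d\chi.
\]

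I would then bound $\int_0^{\pi/2}\chi^{-1-\nu}\min(a\chi^2,1)\,d\chi$ from below by keeping only the part of the range on which $a\chi^2\le1$, i.e. by $a\int_0^m\chi^{1-\nu}\,d\chi=\tfrac{a}{2-\nu}m^{2-\nu}$ with $m:=\min(a^{-1/2},\pi/2)$, which is finite because $1-\nu>-1$. In the regime $|\xi|\epsilon\le2\pi$ (equivalently $a\le4/\pi^2$) one has $m=\pi/2$, so the lower bound is exactly $a\int_0^{\pi/2}\phi^{1-\nu}\,d\phi$; inserting $a=|\xi|^2\epsilon^2/\pi^4$ and using $|\xi|^2\ge\min(|\xi|^2,|\xi|^\nu)$ gives the claimed estimate directly. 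In the complementary regime $|\xi|\epsilon>2\pi$, which forces $|\xi|>1$ and hence $\min(|\xi|^2,|\xi|^\nu)=|\xi|^\nu$, one has $m=a^{-1/2}$, so the lower bound is $\tfrac{1}{2-\nu}a^{\nu/2}$; collecting the powers of $\pi$, $2$ and $\epsilon$ (and using $\int_0^{\pi/2}\phi^{1-\nu}\,d\phi=(\pi/2)^{2-\nu}/(2-\nu)$), the inequality to be proven reduces to $(2\pi/\epsilon)^{2-\nu}\ge1$, which holds since $\epsilon\le1$ and $2-\nu>0$.

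I do not expect a genuine obstacle here. The only care needed is the bookkeeping of constants in the two regimes and the observation that the dichotomy $a\le4/\pi^2$ versus $a>4/\pi^2$ is precisely $|\xi|\epsilon\le2\pi$ versus $|\xi|\epsilon>2\pi$, so that $|\xi|>1$ is automatic in the second case and the $|\xi|^\nu$ branch of the minimum is the active one there. Beyond that, the argument is a routine one-dimensional integral estimate once the spherical reduction and the rescaling $\theta=\epsilon\chi/\pi$ are in place.
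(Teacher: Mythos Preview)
Your proposal is correct and follows essentially the same approach as the paper: reduce to a one-dimensional angular integral via spherical coordinates with pole $\xi/|\xi|$, rescale $\theta=\epsilon\chi/\pi$, apply the lower bounds $1-\cos\theta\ge\tfrac{2}{\pi^2}\theta^2$ and $\beta(\chi)\ge c_1\chi^{-1-\nu}$, and then estimate the resulting integral. The only cosmetic difference is the final dichotomy: the paper substitutes $\phi=|\xi|\chi$ and splits at $|\xi|=1$ (noting that for $\phi\le\pi/2$ and $\epsilon\le1$ the minimum is always $\phi^2/2$), whereas you keep the parameter $a=|\xi|^2\epsilon^2/\pi^4$ and split at $|\xi|\epsilon=2\pi$; both routes yield the stated constant after the same bookkeeping.
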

	\begin{proof}
		From the definition of $\xi^-$, we have
		\[
		|\xi^-|^2 = \frac{|\xi|^2}{2}\left(
		1 - \frac{\xi}{|\xi|}\cdot \sigma
		\right).
		\]
		Using spherical coordinates with radial direction given by $\xi/|\xi|$ (see~\Cref{sec:spherical}), we use the lower bound of~\eqref{eq:boundcos} and directly integrate over $\mathbb{S}_{\xi^\perp}^1$ to obtain
		\begin{align*}
			\iS b^\epsilon\left(
			\frac{\xi}{|\xi|}\cdot \sigma
			\right)\min(|\xi^-|^2,1)d\sigma 
			&= \int_{\mathbb{S}_{\xi^\perp}^1}\int_{\theta=0}^{\epsilon/2}\beta^\epsilon(\theta) \min\left(
			\frac{|\xi|^2}{2}(1-\cos\theta), 1
			\right) d\theta d\xi^\perp \\
			&\ge \frac{4}{\pi} \int_{\theta=0}^{\epsilon/2}\beta^\epsilon(\theta) \min \left(
			\frac{|\xi|^2\theta^2}{2},1
			\right)d\theta.
		\end{align*}
		We introduce the change of variables $\theta = \epsilon\chi/\pi$ and the lower bound of~\ref{ass:beta} giving
		\begin{align*}
			\iS b^\epsilon\left(
			\frac{\xi}{|\xi|}\cdot \sigma
			\right)\min(|\xi^-|^2,1)d\sigma &\ge \frac{4}{\pi} \int_0^{\pi/2}\beta(\chi) \min \left(
			\frac{|\xi|^2\chi^2}{2},\frac{\pi^2}{\epsilon^2}
			\right)d\chi \ge \frac{4c_1}{\pi}\int_0^{\pi/2}\min \left(
			\frac{|\xi|^2\chi^2}{2},\frac{\pi^2}{\epsilon^2}
			\right) \frac{1}{\chi^{1+\nu}}d\chi.
		\end{align*}
		We use one more change of variable $\phi = |\xi|\chi$. In the case $|\xi| \ge 1$ we can further minorize by
		\[
		\frac{4c_1}{\pi}\left(\int_0^{\pi/2} \min \left(
		\frac{\phi^2}{2}, \frac{\pi^2}{\epsilon^2}
		\right)\frac{1}{\phi^{1+\nu}}d\phi\right) |\xi|^\nu .
		\]
		In the case $|\xi| \le 1$, we explicitly obtain
		\begin{align*}
			\frac{2c_1}{\pi} \left(
			\int_0^{|\xi|\pi/2}\phi^{1-\nu}d\phi
			\right) |\xi|^\nu = C |\xi|^2.
		\end{align*}
	\end{proof}
	
\end{appendices}
\section*{Acknowledgements}
JAC was supported by the Advanced Grant Nonlocal-CPD (Nonlocal PDEs for Complex Particle Dynamics: Phase Transitions, Patterns and Synchronization) of the European Research Council Executive Agency (ERC) under the European Union's Horizon 2020 research and innovation programme (grant agreement No. 883363). MGD was partially supported by CNPq-Brazil(\#308800/2019-2) and Instituto Serrapilheira. JW was funded by the University of Oxford Mathematical Institute Award Scholarship.

\bibliographystyle{abbrv}
\bibliography{refs}

\end{document}